\documentclass[preprint,11pt]{article}
\usepackage{fullpage}
\ifdefined\usebigfont

\usepackage{times}
\usepackage[fontsize=13pt]{scrextend}
\usepackage[left=1.56in,right=1.56in,top=1.74in,bottom=1.74in]{geometry}
\pagenumbering{gobble}
\usepackage[applemac]{inputenc}
\else
\fi
\usepackage{amssymb,amsfonts,amsmath,amsthm,amscd,dsfont,mathrsfs}
\usepackage{graphicx,float,psfrag,epsfig,amssymb}
\usepackage[usenames,dvipsnames,svgnames,table]{xcolor}
\definecolor{darkgreen}{rgb}{0.0,0,0.9}
\usepackage[pagebackref,letterpaper=true,colorlinks=true,pdfpagemode=none,citecolor=OliveGreen,linkcolor=BrickRed,urlcolor=BrickRed,pdfstartview=FitH]{hyperref}
\usepackage{wrapfig}
\usepackage{relsize}
\usepackage{color}
\usepackage{pict2e}
\usepackage[tight]{subfigure}
\usepackage{algorithm}
\usepackage[noend]{algorithmic}
\usepackage{caption}
\usepackage{nameref}
\usepackage{makecell}
\usepackage[font={small}]{caption} 
\usepackage{float}
\usepackage{enumitem}

\DeclareMathAlphabet{\mathpzc}{OT1}{pzc}{m}{it}



\newtheorem{propo}{Proposition}[section]
\newtheorem{lemma}[propo]{Lemma}
\newtheorem{definition}[propo]{Definition}
\newtheorem{coro}[propo]{Corollary}
\newtheorem{thm}[propo]{Theorem}

\newtheorem{assumption}[propo]{Assumption}

\theoremstyle{definition}
\newtheorem{remark}[propo]{Remark}


\def\cF{{\cal F}}

\def\cH{{\cal H}}
\def\cC{{\cal C}}
\def\cG{{\cal G}}
\def\cE{{\cal E}}

\def\cS{{\cal S}}

\def\event{\mathcal{E}}

\def\cG{\mathcal{G}}

\def\eps{{\varepsilon}}
\def\prob{{\mathbb P}}
\def\E{{\mathbb E}}
\def\Var{{\rm Var}}

\def\cJ{\mathcal{J}}

\def\L0{{L_i}}

\def\de{{\rm d}}
\def\<{\langle}
\def\>{\rangle}

\def\hth{\widehat{\theta}}

\def\dth{\widehat{\theta}^{{\rm d}}}

\def\hSigma{\widehat{\Sigma}}

\def\hsigma{\widehat{\sigma}}

\def\supp{{\rm supp}}
\def\limsup{\rm{lim \, sup}}
\def\liminf{\rm{lim \, inf}}
\def\F{{\sf F}}
\def\ind{{\mathbb I}}

\def\F{{\sf F}}
\def\normal{{\sf N}}
\def\Lth{\widehat{\theta}^{n}}

\def\sT{{\sf T}}

\def\id{{\rm I}}

\def\proj{\mathcal{P}}

\def\event{\mathcal{E}}

\def\v*{v_i}
\def\T*{T_i}

\def\u*{u_i}
\def\F*{F_i}

\definecolor{olivegreen}{rgb}{0,0.6,0.4}

\def\cH{{\mathcal{H}}}

\def\htheta{\widehat{\theta}}

\newcommand{\BALD}{\begin{aligned}}
\newcommand{\EALD}{\end{aligned}}
\newcommand{\BALDS}{\begin{aligned*}}
\newcommand{\EALDS}{\end{aligned*}}
\newcommand{\BCAS}{\begin{cases}}
\newcommand{\ECAS}{\end{cases}}
\newcommand{\BEAS}{\begin{eqnarray*}}
\newcommand{\EEAS}{\end{eqnarray*}}
\newcommand{\BEQ}{\begin{equation}}
\newcommand{\EEQ}{\end{equation}}
\newcommand{\BIT}{\begin{itemize}}
\newcommand{\EIT}{\end{itemize}}
\newcommand{\BMAT}{\begin{bmatrix}}
\newcommand{\EMAT}{\end{bmatrix}}
\newcommand{\BNUM}{\begin{enumerate}}
\newcommand{\ENUM}{\end{enumerate}}

\newcommand{\BA}{\begin{array}}
\newcommand{\EA}{\end{array}}

\newcommand{\reals}{\mathbb{R}}


\newcommand{\diag}{\mathop{\mathbf{diag}}}

\DeclareMathOperator*{\maximize}{maximize}

\DeclareMathOperator{\sign}{sign}

\newcommand{\norm}[1]{\left\| #1 \right\|}

\def\myF{{F}}

\def\hthp{\theta^{\rm p}}

\def\pe{{\rm p}}

\def\th{{\theta}}
\def\hth{{\widehat{\theta}}}

\def\pth{{\theta^{\pe}}}

\def\Dn{D}

\def\diag{{\rm diag}}

\def\event{\mathcal{E}}

\def\tZ{\tilde{Z}}

\def\G{{G}}
\def\dgamma{\widehat{\gamma}^{{\rm d}}}
\def\bE{\bar{\E}}

\newcommand{\ajcomment}[1]{}

\makeatletter
\newcommand{\labitem}[2]{%
\def\@itemlabel{\text{#1}}
\item
\def\@currentlabel{#1}\label{#2}}
\makeatother

\newcommand{\rev}[1]{{\color{black} #1}}
\addtocontents{toc}{\protect\setcounter{tocdepth}{2}}


\title{A Flexible Framework for Hypothesis Testing in High-dimensions}

\author{Adel~Javanmard\footnote{Data Sciences and Operations Department, University
of Southern California. Email: \url{ajavanma@usc.edu} }
             \;\; and\;\; 
Jason~D.~Lee\footnote{Department of Electrical Engineering, Princeton University. Email: \url{Jasonlee@princeton.edu}}
            }


\begin{document}
\maketitle

\begin{abstract}
Hypothesis testing in the linear regression model is a fundamental statistical problem.  We consider linear regression in the high-dimensional regime where the number of parameters exceeds
the number of samples ($p> n$). \rev{In order to make informative inference, we assume that the model is approximately sparse, that is the effect of covariates on the response can be well approximated by conditioning on a relatively small number of covariates whose identities are unknown.} We develop a framework for testing very general hypotheses regarding the model parameters.  Our framework encompasses testing whether the parameter lies in a convex cone, testing the signal strength, and testing arbitrary functionals of the parameter. We show that the proposed procedure controls the type I error 
, and also analyze the power of the procedure. Our numerical experiments confirm our theoretical findings and demonstrate that we control false positive rate (type I error) near the nominal level, and have high power. By duality between hypotheses testing and confidence intervals, the proposed framework can be used to obtain valid confidence intervals for various functionals of the model parameters.  For linear functionals, the length of confidence intervals is shown to be minimax rate optimal.
\end{abstract}
\section{Introduction}

Consider the high-dimensional regression model where we are given $n$ i.i.d. pairs $(y_1,x_1)$, $(y_2,x_2)$, $\cdots$,
$(y_n,x_n)$ with $y_i\in \reals$, and $x_i\in\reals^p$, denoting the response values and the feature vectors, respectively. The linear regression model posits that
response values are generated as
\begin{eqnarray}\label{eqn:regression}
y_i \,=\, \th_0^\sT x_i + w_i\, ,\;\;\;\;\;\;\;\; w_i\sim
\normal(0,\sigma^2)\, .
\end{eqnarray}
Here $\th_0 \in\reals^p$ is a vector of parameters to be estimated. 
In matrix form,
letting  $y = (y_1,\dots,y_n)^\sT$ and denoting by $X$ the matrix with
rows $x_1^\sT$,$\cdots$, $x_n^\sT$ we have
\begin{eqnarray}\label{eq:NoisyModel}
y\, =\, X\th_0+ w\, ,\;\;\;\;\;\;\;\; w\sim
\normal(0,\sigma^2 \id_{n\times n})\, .
\end{eqnarray}
%

We are interested in high-dimensional models where
the number of parameters $p$ may far exceed the sample size $n$. To make informative inference feasible in this setting, we assume sparsity structure for the model, that is $\theta_0$ has only a few ($s_0<n$) number of nonzero entries, whose identities are unknown.
  
Our goal in this paper is to understand various parameter structures of the high-dimensional model. Specifically, we develop a flexible framework for testing
null hypothesis of the form  
\begin{align}\label{eq:H0-HA}
H_0:\th_0\in \Omega_0\quad \text{ versus }\quad H_A: \theta_0\notin\Omega_0,,
\end{align}
for a general set $\Omega_0\subset \reals^p$. Remarkably, we make no additional assumptions (such as convexity or connectedness) on $\Omega_0$. 

\rev{
In Section~\ref{sec:app-sparsity}, we will relax the sparsity assumption on the model parameters to the \emph{approximate sparsity}. Consider the linear model $y = X\theta_* +w$, where $\theta_*\in \reals^p$ is not necessarily sparse. The approximate sparsity posits that even if the true signal $X\theta_*$ cannot be written as a sparse linear combination of the covariates, there exists at least one sparse linear combination of the covariates that gets close to the true signal. Formally, we assume that there exists a vector $\theta_0\in \reals^p$ such that $\|\theta_0\|_0 = s_0$, and $\|X\theta_* - X\theta_0\| = o_P(1)$. Note that this notion of approximate sparsity is similar to but stronger than the one introduced in~\cite{bunea2007sparsity, belloni2012sparse}.\footnote{In~\cite{belloni2012sparse}, the approximate sparsity assumption allows $\|X\theta_* - X\theta_0\| = O_P(\sqrt{s_0})$, while here we are imposing stronger requirement $\|X\theta_* - X\theta_0\| = o_P(1)$.  }

In addition, in Section~\ref{sec:non-gaussian} we extend our analysis  to non-gaussian noise. 
}

\subsection{Motivation}\label{sec:motivation}
High-dimensional models are ubiquitous in many areas of applications. Examples range from signal processing (e.g. compressed sensing), to
recommender systems (collaborative filtering), to statistical network
analysis, to predictive analytics, etc.
The widespread interest in these applications has spurred
remarkable progress in the area of high-dimensional data analysis \cite{Dantzig,BickelEtAl,buhlmann2011statistics}. Given that the number of parameters goes beyond the sample size, there is no hope to design reasonable estimators without
making further assumption on the structure of model parameters. A natural such assumption is sparsity, which posits that only $s_0$ of the parameters $\th_{0,i}$ are nonzero, and $s_0\le n$.
A prominent approach in this setting for estimating the model parameters is via the Lasso estimator \cite{Tibs96,BP95} defined by 
\begin{eqnarray}\label{eq:Lasso}
\Lth(y,X;\lambda)\equiv \arg\max_{\theta\in\reals^p}
\left\{\frac{1}{2n}\|y-X\theta\|_2^2+\lambda
\|\theta\|_1\right\}\,.
\end{eqnarray}
(We will omit the arguments of $\Lth(y,X;\lambda)$ whenever clear from the context.)

To date, the majority of work on high-dimensional parametric models has focused on point estimation
such as consistency for prediction~\cite{GreenshteinRitov}, oracle inequalities and estimation of parameter vector~\cite{Dantzig,BickelEtAl,WainwrightEllP},
model selection \cite{MeinshausenBuhlmann,zhao,Wainwright2009LASSO}, and variable screening~\cite{fan2008sure}. 
\rev{The work~\cite{bunea2007sparsity} extended the oracle inequalities for the lasso to the setting of weak sparsity and weak approximation, where the effect of covariates on the response can be controlled up to a small approximation error by conditioning on a relatively small number of covariates, whose identities are unknown. 
The minimax rate for estimating the parameters in the high-dimensional linear model was studied in~\cite{ye2010rate,raskutti2011minimax}, assuming that the true model parameters belong to some $\ell_q$ ball. }

Despite this remarkable progress, the fundamental problem of statistical
significance is far less understood in the high-dimensional setting. Uncertainty assessment
is particularly important when one seeks subtle statistical patterns about the model parameters $\th_0$.

Below, we discuss some important examples of high-dimensional inference that can be performed when provided a methodology for testing hypothesis of the form~\eqref{eq:H0-HA}. 
\bigskip

{\bf Example 1 (Testing $\th_{\min}$ condition)}
Support recovery in high-dimension concerns the problem of finding a set $\widehat{S} \subseteq \{1, 2,\dots,p\}$, such that $\prob(\widehat{S} = S)$ is large, where $S \equiv \{i: \, \th_{0,i}\neq 0,\, 1\le i\le p\}$. Work on support recovery requires the nonzero parameters be large enough to be detected. Specifically, for exact support recovery meaning that $\prob(\hat{S}\neq S) \to 1$, it is assumed that $\min_{i\in S} |\th_{0,i}|  = \Omega(\sqrt{(\log p)/n})$. This assumption is often referred to as $\th_{\min}$ condition and is shown to be necessary for exact support recovery~\cite{meinshausen2009lasso,zhao,fan2001variable,zhao,Wainwright2009LASSO,MeinshausenBuhlmann}.

Relaxing the task of exact support recovery, let $\alpha$ and $\beta$ be the type I and type II error rates in detecting nonzero (active) parameters of the model. In~\cite{javanmard2013hypothesis}, it is shown that even for gaussian design matrices, any hypothesis testing rule with nontrivial power $1-\beta >\alpha$ requires $\min_{i\in S} |\th_{0,i}| = \Omega(1/\sqrt{n})$.
Despite $\th_{\min}$ assumption is commonplace, it is not verifiable in practice and hence it calls for developing methodologies that can test whether such condition holds true.

For a vector $\th\in \reals^p$, define support of $\th$ as $\supp(\th) = \{1\le i\le p:\, \th_i\neq 0\}$. 
In~\eqref{eq:H0-HA}, letting $\Omega_0 = \{\th\in \reals^p:\, \min_{i\in \supp(\th)} |\th_i| \ge c\}$, we can test $\th_{\min}$ condition for any given $c\ge 0 $ and at a pre-assigned significance level $\alpha$. 

\bigskip

{\bf Example 2 (Confidence intervals for quadratic forms)}
We can apply our method to test hypothesis of form
\begin{align}
H_0:\;\|Q\th_0\|_2\in \Omega_0\,, 
\end{align}
for some given set $\Omega_0 \subseteq [0,\infty)$ and a given matrix $Q\in \reals^{m\times p}$. By duality between hypothesis testing and confidence interval, we can also construct confidence intervals for quadratic forms $\|Q\th_0\|$. 

In the case of $Q=\id$, this yields inference on the signal strength $\|\th\|_2^2$. 
As noted in~\cite{janson2016eigenprism}, armed with such testing method one can also provide confidence intervals for the estimation error, namely $\|\hth-\th_0\|_2^2$. Specifically, we split the collected samples into two independent groups $(y^{(0)},X^{(0)})$ and $(y^{(1)},X^{(1)})$, and construct an estimate $\hth$ just by using the first group. Letting $\tilde{y} \equiv y^{(1)}- X^{(1)}\hth$, we obtain a linear regression model $\tilde{y} = X^{(1)}(\th_0-\hth)+ w$. Further, if $\hth$ is a sparse estimate, then $\th_0-\hth$ is also sparse. Therefore, inference on the signal strength on the obtained model is similar to inference on the error size $\|\th_0-\hth\|_2^2$. 

Inference on quadratic forms turns out to be closely related to a number of well-studied problems, such as estimate of the noise level $\sigma^2$ and the proportion of explained variation~\cite{fan2012variance,bayati2013estimating,
dicker2014variance,janson2016eigenprism,verzelen2016adaptive,guo2017optimal}. To expand on this point, suppose that attributes $x_i$ are drawn i.i.d. from a gaussian distribution with covariance $\Sigma$, and the noise level $\sigma^2$ is unknown.
 Then, $\Var(y_i) = \sigma^2+\|\Sigma^{1/2} \th_0\|_2^2$. Since $\|y\|_2^2/\Var(y_i)$ follows a $\chi^2$ distribution with $n$ degrees of freedom, we have
$\|y\|_2^2/n = \Var(y_i)[1+O_P(n^{-1/2})]$. Hence, task of inference on the quadratic form $\|\Sigma^{1/2} \th_0\|_2^2$ and  the noise level $\sigma^2$ are intimately related.
This is also related to the proportion of explained variation defined as 
\begin{align}
 \eta(\th_0,\sigma) = \frac{\E((x_i^\sT \th_0)^2)}{\Var(y_i)} =\frac{\mu}{1+\mu}\,,
\end{align}
 with $\mu = (1/\sigma^2) \|\Sigma^{1/2}\th_0\|_2^2$ the signal-to-noise ratio. This quantity is of crucial importance in genetic variability~\cite{visscher2008heritability} as it somewhat quantifies the 
 proportion of variance in a trait (response) that is explained by genes (design matrix) rather than environment (noise part).

\bigskip

{\bf Example 3 (Testing individual parameters $\th_{0,i}$)}
Recently, there has been a significant interest in testing individual hypothesis $H_{0,i}: \th_i = 0$, in the high-dimensional regime.
This is a challenging problem because obtaining an exact characterization
of the probability distribution of the parameter estimates in the high-dimensional regime is notoriously hard.

A successful approach is based on debiasing the regularized estimators. The resulting debiased estimator is amenable to distributional characterization which can be used for inference on individual parameters~\cite{javanmard2014confidence,javanmard2013hypothesis,zhang2014confidence,van2014asymptotically,javanmard2013nearly}. Our methodology for testing hypothesis of form~\eqref{eq:H0-HA} is built upon the debiasing idea. It also recovers the debiasing approach for $\Omega_0 = \{\th\in \reals^p:\, \th_i = 0\}$.

\bigskip

{\bf Example 4 (Confidence intervals for predictions)}
For a new sample $\xi$, we can perform inference on the response value $\xi^\sT \th_0$ by letting $\Omega_0 = \{\th:\, \xi^\sT \th_0 = c\}$ for a given value $c$. Further, by duality between hypothesis testing and confidence intervals, we can construct confidence interval for $\xi^\sT \th_0$. We refer to Section~\ref{sec:Other} for further details.

\bigskip

{\bf Example 5 (Confidence intervals for $f(\th_0)$)}
Let $f:\reals^p \to\reals$ be an arbitrary function. By letting $\Omega_0 = \{\th:\, f(\th_0) = c\}$   we can test different values of $f(\th_0)$. Further, by employing the duality relationship between hypothesis testing and confidence intervals, we can construct confidence intervals for $f(\th_0)$. Note that Examples 3, 4 are special cases of $f(\th_0) = e_i^\sT \th_0$ and $f(\th_0) = \xi^\sT \th_0$. Here $e_i$ is the $i$-th standard basis element with one at the $i$-th entry and zero everywhere else.

\bigskip

{\bf Example 6 (Testing over convex cones)} For a given cone $\cC$, our framework allows us to test whether $\th_0$ belongs to $\cC$. Some examples that naturally arise in studying treatment effects are nonnegative cone $\cC_{\ge0} =\{\th\in \reals^p:\, \th_i\ge 0\,\,\text{for all } 1\le i\le p\}$, and monotone cone $\cC_M = \{\th\in\reals^p:\, \th_1\le \th_2\le \dotsc\le \th_p\}$. Letting $\th_i$ denote the mean of treatment $i$, by testing $\th_0\in \cC_{\ge0}$, one can test whether all the treatments in the study are harmless. Another case is when treatments correspond to an ordered set of dosages of the same drug. Then, one might reason that if the drug is of any effect, its effect should follow a monotone relationship with its dosage. This hypothesis can be cast as $\th_0\in \cC_M$.
Such testing problems over cones have been studied for gaussian sequence models by~\cite{kudo1963multivariate, robertson1978likelihood,raubertas1986hypothesis}, and very recently by~\cite{wei2017geometry}.

\subsection{Other Related work}
\label{sec:Related}
Testing in the high-dimensional linear model has experienced a resurgence in the past few years. Most closely related to us is the line of work on debiasing/desparsifying pioneered by \cite{zhang2014confidence,van2014asymptotically,javanmard2014confidence}. These papers propose a debiased estimator $\dth$ such that every coordinate $\dth _i$ is approximately gaussian under the condition that ${s_0^2(\log p) }/{n} \to 0$, which is in turn used to test single coordinates of $\th_0$, $H_{0}: \th_{0,i} =0$, and construct confidence intervals for $\th_{0,i}$. In a parallel line of work, \cite{belloni2011lasso,belloni2013program, belloni2011inference,belloni2014inference} have also designed an asymptotically gaussian pivot via the post-double-selection lasso, under the same sample size condition of ${s_0^2(\log p) }/{n} \to 0$. \cite{cai2015confidence} established that the sample size conditions required by debiasing and post-double-selection are minimax optimal meaning to construct a confidence interval of length $O({1}/{\sqrt{n}})$ for a coordinate of $\th_0$ requires ${s_0^2(\log p) }/{n} \to 0$. 

The debiasing and post-double-selection approaches have also been applied to a wide variety of other models for testing $\th_{0,i}$  including missing data linear regression \cite{wang2017rate}, quantile regression \cite{zhao2014general}, and graphical models \cite{ren2015asymptotic,chen2016asymptotically,wang2016inference,barber2015rocket}.

In the multiple testing realm, the debiasing approach has been used to control directional FDR~\cite{javanmard2018false}. Other methods such as FDR-thresholding and SLOPE procedures controls the false discovery rate (FDR) when the design matrix $X$ is orthogonal \cite{su2016slope,bogdan2015slope,abramovich2006special}. In the non-orthogonal setting, the knockoff procedure \cite{barber2014controlling} controls FDR whenever $n \ge 2p$, and the noise is isotropic; In \cite{janson2015familywise}, knockoff was generalized to also control for the family-wise error rate. More recently, \cite{candes2016panning} developed the model-free knockoff which allows for $p>n$ when the distribution of $X$ is known.

In parallel, there have been developments in selective inference, namely inference for the variables that the lasso selects. \cite{lee2016exact, tibshirani2016exact} developed exact tests for the regression coefficients corresponding to variables that lasso selects. This was further generalized to a wide variety of polyhedral model selection procedures including marginal screening and orthogonal matching pursuit in \cite{lee2014exact}. \cite{tian2015selective,fithian2014optimal,harris2016selective} developed more powerful and general selective inference procedures by introducing noise in the selection procedure. To allow for selective inference in the high-dimensional setting, \cite{lee2016exact} combined the polyhedral selection procedure with the debiased lasso to construct selectively valid confidence intervals for $\th_{0,i}$ when ${s_0(\log p)}/{\sqrt{n}} \to 0$. 

Much of the previous work has focused on testing coordinates or one-dimensional projections of $\th_0$. 
An exception is the work~\cite{nickl2013confidence} which studies the problem of constructing confidence sets for the high dimensional linear models, so that the confidence sets are honest over the family of sparse parameters, under i.i.d gaussian designs. 
Our work increases the applicability of the debiasing approach by allowing for general hypothesis, $\th_0 \in \Omega_0$. 
The set $\Omega_0$ can be non-convex or even disconnected. Our setup encompasses a broad range of testing problems and it is shown to be minimax optimal for special cases such as $\Omega = \{\th:\th_i  = 0\}$ and $\Omega_0 = \{\th:\xi^\sT \th = c\}$.

The authors in~\cite{zhu2017projection} have studied the problem~\eqref{eq:H0-HA} independently and indeed~\cite{zhu2017projection} was posted online around the same time that the first draft of our paper was released.
This work also leverages the idea of debiasing but greatly differs from this work, both in methodology and theory, which we now discuss. In~\cite{zhu2017projection}, the debiased estimator is constructed in the standard basis (as compared to ours which is done in a lower dimensional subspace) and is followed by an $\ell_1$ projection to construct the test statistic. The test statistic involves a data dependent vector and the method uses bootstrap to   approximate the distribution of the test statistic and set the critical values. In terms of theory,~\cite{zhu2017projection} shows that the proposed method controls the type I error at the desired level assuming that $\log p = o(n^{1/8})$ and $s_0 = o(n^{1/4}/\sqrt{\log p})$ (See Theorem 1 therein), while we prove such result for our test under $s_0 = o(\sqrt{n}/\log p)$. It is shown in~\cite{zhu2017projection} that the rule achieves asymptotic power one provided that the signal strength (measured in term of the $\ell_\infty$ distance of $\th_0$ from $\Omega_0$) asymptotically dominates $n^{-1/4}$. In comparison, in Theorem~\ref{thm:power} we establish a lower bound of the power for \emph{all values} of the signal strength and as a corollary of that we show the method achieves power one if the signal strength dominates $n^{-1/2}$ asymptotically.

\subsection{Organization of the paper}
In the remaining part of the introduction, we present the notations and a few preliminary definitions. 
The rest of the paper presents the following contributions: 

\begin{itemize}
\item Section~\ref{sec:test}. We explain our testing methodology. It consists of constructing a debiased  estimator for the projections of the model parameters in a lower dimensional subspace. It is then followed by an $\ell_\infty$ projection to form the test statistic.
\item Section~\ref{sec:Results}. We present our main results. Specifically, we show that our method controls false positive rate under a
pre-assigned $\alpha$ level. We also derive an analytical lower bound for the statistical power of our test. In case of $\Omega_0 = \{\th\in \reals^d:\, \th_{i} =0\}$ (Example 3), it matches the bound proposed in~\cite[Theorem 3.5]{javanmard2014confidence}, which is also shown to be minimax optimal.
\item Section~\ref{sec:app-sparsity}. We explain the notion of approximate sparsity and discuss how our results can be extended to allow for approximately sparse models.
\item Section~\ref{sec:non-gaussian}. We relax the gaussianity assumption on the noise component and discuss how to address possibly non-gaussian noise under proper moment conditions.
\item Section~\ref{sec:Other}. We provide applications of our framework for some special cases: Inference on linear predictions, quadratic forms of the parameters and testing the $\theta_{\min}$ condition. In Section~\ref{sec:prior-art}, we discuss the existing literature for these subproblems and compare it to our proposed methodology.
\item Section~\ref{sec:numerical}.  We provide numerical experiments to corroborate our findings and evaluate type I error and statistical power of our test under various settings.  
\item Section~\ref{proof:theorems}. Proof of Theorems are given in this section, while the proof of technical lemmas are deferred to appendices.
%
\end{itemize}

%
%

\subsection{Notations}\label{sec:preliminary}
We start by adapting some simple notations that will be used throughout the paper, along with some basic definitions from the literature on high-dimensional regression.

We use $e_i$ to
refer to the $i$-th standard basis element, e.g., $e_1 = (1,0,\dotsc,0)$. For a vector $v$, $\supp(v)$ represents
the positions of nonzero entries of $v$. For a vector $\theta$ and a subset $S$, 
$\theta_S$ is the restriction of $\theta$ to indices in $S$. For an integer $p\ge 1$, we use the notation $[p] = \{1,\cdots, p\}$. We write $\|v\|_p$ for the standard $\ell_p$ norm of a vector $v$, i.e., $\|v\|_p = (\sum_i |v_i|^p)^{1/p}$ and $\|v\|_0$ for the number of nonzero entries of $v$.  Whenever the subscript $p$ is not mentioned it should be read as $\ell_2$ norm. 
For a matrix $A$, we denote by $|A|_{\infty}
\equiv\max_{i\le m, j\le n}|A_{ij}|$, the maximum absolute value of entries of $A$.
Further, its maximum and minimum singular values are respectively indicated by by $\sigma_{\max}(A)$ and $\sigma_{\min}(A)$.
Throughout, $\Phi(x) \equiv \int_{-\infty}^x e^{-t^2/2} \de t/\sqrt{2\pi}$ denotes
the CDF of the standard normal distribution. We also denote the $z$-values $z_{\alpha} = \Phi^{-1}(1-\alpha)$.

The term ``with high probability" means with probability converging to one as $n \to \infty$ and for two
functions $f(n)$ and $g(n)$, the notation $f(n) = o(g(n))$ means that $g$ `dominates' $f$ asymptotically,
namely, for every fixed positive $C$, there exists $n(C)$ such that $f(n) \le C g(n)$ for $n > n(C)$. 
Likewise, $f(n) = O(g(n))$ indicates that $f$ is `bounded' above by $g$ asymptotically, i.e., $f(n)\le C g(n)$ for some positive constant $C$. 
Analogously, we use he notations $o_P(\cdot)$ and $O_P(\cdot)$ to indicate asymptotic behavior is probability as the sample size $n$grows to infinity.  

 Let $\hSigma = (X^\sT X)/n \in \reals^{p\times p}$ be the sample covariance of the design $X\in \reals^{n\times p}$. In the high-dimensional setting, where $p$ exceeds $n$, $\hSigma$
 is singular. As common in high-dimensional statistics, we assume \emph{compatibility condition} which requires $\hSigma$ to be nonsingular in a restricted set of directions.
 
 We use the notation $\|\cdot\|_{\psi_2}$ to refer to the sub-gaussian norm. Specifically, for a random variable $X$,  we let 
 \begin{align}\label{def:subG}
 \|X\|_{\psi_2} = \sup_{q\ge 1} q^{-1/2} (\E|X|^q)^{1/q}\,.
 \end{align}
 
For a random vector $X\in\reals^m$, its sub-gaussian norm is defined as 
$$\|X\|_{\psi_2} = \sup_{\|x\|\le 1} \|\<X,x\>\|_{\psi_2}\,.$$

 \begin{definition}\label{eq:compatibility}
 For a symmetric matrix $J\in \reals^{p\times p}$ and a set $S\subseteq [p]$, the compatibility condition is defined as 
 \begin{align}
 \phi^2(J, S) \equiv \min_{\th\in \reals^p} \Big\{\frac{|S| \<\th, J\th\>}{\|\th_S\|_1^2}:\quad \th\in \reals^p,\, \|\th_{S^c}\|_1\le 3\|\th_S\|_1 \Big\}\,.
 \end{align}
 Matrix $J$ is said to satisfy compatibility condition for a set $S\subseteq [p]$, with constant $\phi_0$ if $\phi(J,S) \ge \phi_0$.
 \end{definition}   

\section{Projection statistic}\label{sec:test}
Depending on the structure of $\Omega_0$ it may be useful to instead of testing the null hypothesis $H_0: \theta_0 \in \Omega_0$, we test it in a lower dimensional space. 
Consider an $k$-dimensional subspace represented by an orthonormal basis $\{u_1,\dotsc, u_k\}$, with $u_i\in \reals^p$. For this section, we assume that the basis $\{ u_1, \ldots, u_k\}$ is predetermined and fixed. In Section \ref{sec:choice-U}, we discuss how to choose the subspace depending on $\Omega_0$ to maximize the power of the test. The projection onto this subspace is given by
\[
\proj_U(\theta) = \sum_{i=1}^k \<\theta,u_i\> u_i = UU^\sT \theta\,,
\]
where $U = [u_1, \dotsc, u_k]\in\reals^{p\times k}$.  We also use the notation $\proj_U(\Omega_0) = \{\proj_U(\theta):\,\theta\in \Omega_0\}$ to denote the projection of $\Omega_0$ onto the subspace $U$. 
Define the hypothesis
\begin{align}
\tilde{H}_0: \proj_U(\theta_0)\in \proj_U(\Omega_0)\,.
\end{align}
Under the null $H_0$, $\tilde H_0$ also holds, so controlling the type-I error of $\tilde H_0$ also controls the type-I error of $H_0$.  In the following we propose a testing rule $R \in \{0,1\}$ for the null hypothesis $\tilde{H}_0$ and show that it controls type-I error below a pre-assigned level $\alpha$. Consequently,
\[
\sup_{\theta\in \Omega_0} \prob_{\theta}(R = 1) \le \sup_{\proj_U(\theta)\in \proj_U(\Omega_0)} \prob_\theta(R = 1) \le \alpha\,.
\] 
For now, we consider an arbitrary fixed subspace $U$, and then after we analyze the statistical power of our test we provide guidelines on how to choose $U$ to increase the power. 

In order to test $\tilde{H}_0$ we construct a test statistic based on the debiasing approach. 

We first let $\{\hth,\hsigma\}$ be the scaled Lasso estimator~\cite{SZ-scaledLasso} given by
\begin{align}\label{scaledLasso}
\{\Lth(\lambda),\hsigma(\lambda)\} = \underset{\th\in \reals^p,\sigma>0}{\arg\min} \left\{\frac{1}{2\sigma n} \|y-X\th\|_2^2+\frac{\sigma}{2} + \lambda \|\th\|_1 \right\}\,.
\end{align}
This optimization simultaneously gives an estimate of $\th_0$ and $\sigma$. We use regularization parameter $\lambda = \sqrt{2.05 (\log p)/n}$. 
Due to the $\ell_1$ penalization, the lasso estimator $\hth$ is biased towards small $\ell_1$ norm, and so is the projection $\proj_U(\theta_0)$.
We view $\proj_U(\theta_0)$ in the basis $U$, namely $\gamma_0 = U^\sT \theta_0$ and construct a debiased estimator for it in the following way:
\begin{align}\label{dgamma}
\dgamma = U^\sT \htheta + \frac{1}{n} \G^\sT X^\sT(y-X\htheta)\,,
\end{align}
with the decorrelating matrix $\G = [g_1|\dotsc|g_k]\in \reals^{p\times k}$, where each $g_i$ is obtained by solving the optimization problems for each $1\le i\le k$:
%
\begin{align}\label{OPT:M-n}
\begin{split}
&\text{minimize} \quad g^\sT \hSigma g\\
&\text{subject to} \;\; \|\hSigma g - u_i\|_\infty \le \mu
\end{split}
\end{align}
Note that the decorrelating matrix $G\in\reals^{p\times p}$ is a function of $X$, but not
of $y$. We next state a lemma that provides a a bias-variance decomposition for $\dgamma$ and brings insight about the form of debiasing given by~\eqref{dgamma}.
%
%
%


\begin{lemma}\label{propo:bias-size2} 
Let $X\in \reals^{n\times p}$ be any (deterministic) design matrix. Assuming that optimization problem~\eqref{OPT:M-n} is feasible for $i\in [k]$, let $\dgamma =\dgamma(\lambda)$ be a 
general debiased estimator as per Eq~\eqref{dgamma}. 
Then, setting $Z = G^\sT X^\sT w/\sqrt{n}$, with $w$ the noise vector in the regression~\eqref{eq:NoisyModel}, we have
\begin{align}\label{dgamma-decomposition}
\sqrt{n}(\dgamma-U^\sT \th_0) = Z+\Delta\,,\quad Z\sim\normal(0,\sigma^2 G^\sT\hSigma G)\,,\quad \Delta = \sqrt{n}(G^\sT \hSigma - U^\sT)(\th_0-\hth)\,.
\end{align} 
Further, assume that $X$ satisfies the compatibly condition for the set $S = \supp(\theta_0)$, $|S|\le s_0$, with constant $\phi_0$, and let $K\equiv \max_{i\in[p]}(X^\sT X/n)_{ii}$. Then, choosing $\lambda = c \sqrt{(\log p)/n}$, we have
\begin{align}
\prob\left(\|\Delta\|_\infty \ge  \frac{c\mu\sigma s_0}{\phi_0^2}\sqrt{\log p} \right)\le 2p^{-c_0}+ 2e^{-n/16}\,, \quad c_0 = \frac{c^2}{32K}-1\,.
\label{eq:delta-small}
\end{align}
\end{lemma}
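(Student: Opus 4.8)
The plan is to establish the identity~\eqref{dgamma-decomposition} by direct algebra and then to control $\|\Delta\|_\infty$ coordinatewise, using the feasibility constraint in~\eqref{OPT:M-n} together with the standard $\ell_1$ oracle inequality for the scaled Lasso. First I would substitute the model $y = X\th_0 + w$ into the definition~\eqref{dgamma}. Writing $y - X\hth = X(\th_0-\hth) + w$ and using $X^\sT X/n = \hSigma$ gives
\[
\dgamma = U^\sT \hth + G^\sT \hSigma(\th_0-\hth) + \tfrac{1}{n}G^\sT X^\sT w\,.
\]
Subtracting $U^\sT\th_0$ and regrouping the first two terms yields $\dgamma - U^\sT\th_0 = (G^\sT\hSigma - U^\sT)(\th_0-\hth) + \tfrac{1}{n}G^\sT X^\sT w$, and multiplying by $\sqrt{n}$ identifies $\Delta$ and $Z = G^\sT X^\sT w/\sqrt{n}$ exactly as claimed. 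Since $w\sim\normal(0,\sigma^2\id_n)$ and $Z$ is, conditionally on $X$, a fixed linear image of $w$, it is centered Gaussian with covariance $\tfrac{\sigma^2}{n}G^\sT X^\sT X\,G = \sigma^2 G^\sT\hSigma G$, giving the stated law.

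For the tail bound I would argue entrywise. The $i$-th coordinate is $\Delta_i = \sqrt{n}(\hSigma g_i - u_i)^\sT(\th_0-\hth)$, so Hölder's inequality together with the constraint $\|\hSigma g_i - u_i\|_\infty\le\mu$ from~\eqref{OPT:M-n} gives $|\Delta_i|\le\sqrt{n}\,\mu\,\|\th_0-\hth\|_1$ uniformly in $i$, hence $\|\Delta\|_\infty\le\sqrt{n}\,\mu\,\|\th_0-\hth\|_1$. It then suffices to show that, off an event of probability at most $2p^{-c_0}+2e^{-n/16}$,
\[
\|\th_0-\hth\|_1 \;\le\; \frac{c\sigma s_0}{\phi_0^2}\sqrt{\frac{\log p}{n}}\,,
\]
since multiplying by $\sqrt{n}\mu$ reproduces the target threshold $\tfrac{c\mu\sigma s_0}{\phi_0^2}\sqrt{\log p}$ (recall $\lambda = c\sqrt{(\log p)/n}$).

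This last $\ell_1$-error bound is the crux, and it is where the two probability terms originate. I would condition on the good event $\cE = \{\tfrac{1}{n}\|X^\sT w\|_\infty\le\lambda_0\}$ with $\lambda_0 \asymp \sigma\sqrt{(\log p)/n}$; because each coordinate of $X^\sT w/n$ is conditionally Gaussian, a Gaussian tail bound followed by a union bound over the $p$ columns gives $\prob(\cE^c)\le 2p^{-c_0}$ with $c_0 = c^2/(32K)-1$, the sub-Gaussian parameter $K$ of the design entering the variance proxy. Separately, the \emph{scaled} Lasso self-estimates the noise level, so I would control $\hsigma$: using $\|w\|_2^2/\sigma^2\sim\chi^2_n$ and a chi-square concentration inequality, $\hsigma$ is pinned to a constant multiple of $\sigma$ off an event of probability $2e^{-n/16}$, which guarantees the effective penalty $\hsigma\lambda$ dominates $\lambda_0$. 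On the intersection of these events the usual basic-inequality argument applies: the KKT conditions force $\th_0-\hth$ into the compatibility cone $\|(\th_0-\hth)_{S^c}\|_1\le 3\|(\th_0-\hth)_S\|_1$, whereupon Definition~\ref{eq:compatibility} with constant $\phi_0$ converts prediction-error control into the displayed $\ell_1$ bound, producing the factor $s_0/\phi_0^2$.

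The main obstacle will be precisely this last step: managing the coupling between $\hsigma$ and $\hth$ in the scaled Lasso while tracking constants tightly enough to land on exactly $c_0 = c^2/(32K)-1$ and the $e^{-n/16}$ term. By contrast, the algebraic decomposition and the Hölder reduction to $\|\th_0-\hth\|_1$ are routine.
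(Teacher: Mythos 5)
Your proposal is correct and follows essentially the same route as the paper, which omits the proof and points to Theorem 2.3 of \cite{javanmard2014confidence}: the algebraic decomposition, the H\"older/feasibility bound $\|\Delta\|_\infty\le\sqrt{n}\,\mu\,\|\th_0-\hth\|_1$, and the scaled-Lasso $\ell_1$ oracle inequality whose two failure events (union bound over the $p$ coordinates of $X^\sT w/n$ giving $2p^{-c_0}$, and $\chi^2$ concentration of $\|w\|_2/\sqrt{n}$ giving $2e^{-n/16}$) produce exactly the stated probability terms. Your deferral of exact constant tracking is consistent with the paper itself, which states the threshold with looser constants than the cited reference.
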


Lemma~\ref{propo:bias-size2} can be proved in a similar way to Theorem 2.3 of \cite{javanmard2014confidence} and its proof is omitted here. The decomposition~\eqref{dgamma-decomposition} explains the rationale behind  optimization~\eqref{OPT:M-n}. Indeed the convex program~\eqref{OPT:M-n} aims at optimizing two objectives. On one hand, the constraint controls the term $|G^\sT\hSigma - U^\sT|_\infty$, which by Lemma~\ref{propo:bias-size2} controls the bias term $\|\Delta\|_\infty$. On the other hand, it minimizes the objective function $g^\sT \hSigma g$, which controls the variance of $\dgamma_i$.  Therefore, the parameter $\mu$ in optimization~\eqref{OPT:M-n} controls the bias-variance tradeoff and should be chosen large enough to ensure that~\eqref{OPT:M-n} is feasible. 
(See Section~\ref{mu*} for further discussion.) 
\begin{remark}
In the special case of $k =1$ and $u = e_i$, the debiased estimator~\eqref{dgamma} reduces to the one introduced in~\cite{javanmard2014confidence}. For the special case of $k =1$, it becomes similar to the estimator proposed by~\cite{cai2015confidence} that is used to construct confidence intervals for linear functionals of $\theta_0$.  Note that
the proposed debiasing procedure incurs small bias in the infinity norm with respect to the rotated basis, $\norm{\dgamma - U^\sT\theta_0}_\infty$, as opposed to the standard debiasing procedure~ \cite{javanmard2014confidence,javanmard2013hypothesis,zhang2014confidence,van2014asymptotically,javanmard2013nearly} which incurs small bias, in the infinity norm, with respect to the original basis, and not necessarily in the rotated basis.
\end{remark}
\rev{
The following assumption ensures that the entries of noise $Z$ have non-vanishing variances.
\begin{assumption}\label{ass:var}
We have $\lim\inf_{n\to\infty} \min_{i\in [k]} (G^\sT \hSigma G)_{i,i} \ge c_0>0$, for some positive constant $c_0$.
\end{assumption}
The above assumption entails the decorrelating matrix $G$, where our proposal constructs via optimization~\eqref{OPT:M-n}. In the following lemma, we provide a sufficient condition for the above assumption to hold.
\begin{lemma}\label{lem:sufficient}
Suppose that $\lim \sup_{n\to \infty}  \mu  (\max_{i\in [k]}\, \|u_i\|_1)\le c<1$ and $\lim \sup_{n\to\infty} \max_{i\in [k]} (u_i^\sT \hSigma u_i) < C<\infty$, for some constant $c, C$.  Then, Assumption~\ref{ass:var} holds.
\end{lemma} 
We refer to Appendix~\ref{app:lem-sufficient} for the proof of Lemma~\ref{lem:sufficient}.

\begin{remark}
The very recent work~\cite{cai2019individualized} uses the debiasing approach for inference on individualized treatment effect (and for general linear function $u^T\theta_0$ ). The proposed mechanism slightly differs from~\eqref{OPT:M-n} in that it includes an extra constraint. By this trick, the proposed mechanism of~\cite{cai2019individualized} can be used for inference on a broad family of loading vector $u$.
We can follow the same idea and replace optimization~\eqref{OPT:M-n} by the following optimization
\begin{align}\label{OPT:M-n-update}
\begin{split}
&\text{minimize} \quad g^\sT \hSigma g\\
&\text{subject to} \;\; \|\hSigma g - u_i\|_\infty \le \mu\,,\\
&\quad \quad\quad \quad \quad |u_i^\sT \hSigma g - 1|\le \mu\,.
\end{split}
\end{align}
This way Assumption~\ref{ass:var} is automatically satisfied (See~\cite[Lemma 1]{cai2019individualized} for the details). 
\end{remark}
}

%
%
%
Define the shorthand
\begin{align}\label{Q}
Q^{(n)} \equiv \frac{\hsigma^2}{n} (\G^\sT\hSigma \G)\,,\quad D^{(n)} \equiv \diag(\{Q^{(n)}_{ii}\}^{-1/2})\,.
\end{align}
To ease the notation, we hereafter drop the superscript $(n)$.
We next construct a test statistic $T_n$ so that the large values of $T_n$ provide evidence against the null hypothesis.  For this, consider the $\ell_\infty$ projection
 estimator given by 
\begin{align}\label{OPT1-n}
\begin{split}
\hthp = & \quad \underset{\theta \in \reals^p}{\text{argmin}} \quad \quad \|\Dn(\dgamma - U^\sT \theta)\|_\infty\\
&\quad \text{subject to} \;\;\; \th\in \Omega_0\,. 
\end{split}
\end{align}
We then define the test statistic to be the optimal value of~\eqref{OPT1-n}, i.e., 
\begin{align}\label{test-statistic}
T_n = \|\Dn(\dgamma-U^\sT \hthp)\|_\infty
\end{align}
The reason for using $\ell_\infty$ norm in the projection is that the bias term of $\dgamma$ is controlled in $\ell_\infty$ norm (See Lemma~\ref{propo:bias-size2}.) 
The decision rule is then based on the test statistic: 
\begin{align}\label{detection}
R_X(y) = \begin{cases}
1 & \text{ if } T_n \ge z_{\alpha/(2k)} \quad \quad (\text{reject }\tilde{H}_0)\\
\\
0 & \text{ otherwise } \quad \quad \quad \, \,\, (\text{fail to reject }\tilde{H}_0).
\end{cases}
\end{align}

%
%

The above procedure generalizes the debiasing approach of \cite{javanmard2014confidence}. Specifically, for $\Omega_0 =\{\th : \theta_1=0\}=\{0\} \times \reals^{p-1}$ and $U = e_1 e_1^\sT$, the test rule becomes the one proposed by~\cite{javanmard2014confidence} for testing hypothesis of the form $H_{0}: \theta_{0,1} = 0$ versus its alternative. 

\rev{
\begin{remark}
Using Lemma~\ref{propo:bias-size2}, under the null hypothesis $H_0:\,\theta_0\in \Omega_0$, we have that $D(\dgamma - U^\sT \pth)$ is (asymptotically) stochastically dominated by $DZ$, whose entries are dependent and are distributed as standard normal. The choice of threshold $z_{\alpha/(2k)}$ in~\eqref{detection} comes from using this observation and union bounding to control the (two-sided) tail of $\|DZ\|_\infty$. Given that Lemma~\ref{propo:bias-size2} also characterizes the dependency structure of the entries of $DZ$, we can pursue another (less conservative) approach to choose the rejection threshold.  As an implication of  Lemma~\ref{propo:bias-size2}, and since $k$ (dimension of $Z$) is fixed, we have that for all $t\in \reals$, 
\begin{align}\label{eq:asymp-dist}
\prob\left(\|D(\dgamma - U^\sT \theta_0)\|_\infty \le t\right) - \prob\left(\|D Z\|_\infty \le t\right) = o_P(1)\,.
\end{align}
Under the null hypothesis $H_0$, we have $\|D(\dgamma - U^\sT \pth)\|_\infty \le \|D(\dgamma - U^\sT \theta_0)\|_\infty$, and by~\eqref{eq:asymp-dist}, the distribution of $\|D(\dgamma - U^\sT \theta_0)\|_\infty$ is asymptotically equal to the maximum of dependent standard normal variables $\|DZ\|_\infty$, whose distribution can be easily simulated since the covariance of the multivariate gaussian vector $DZ$ is known.
\end{remark}
}


In the next section, we prove that decision rule~\eqref{detection} controls type-I error below the target level $\alpha$ provided the basis $U$ is independent of the samples $(y_i,x_i)$, $1\le i\le n$. 
We also develop a lower bound on the statistical power of the testing rule and use that to choose the basis $U$.

\section{Main results}
\label{sec:Results}

\subsection{Controlling false positive rate}

%
\begin{definition}\label{mu*} 
Consider a given triple $(X;U;\G)$ where $X\in \reals^{n\times p}$, $U \in\reals^{p\times k}$ with $U^\sT U = I$ and $\G\in \reals^{p\times k}$.
The \emph{generalized coherence} parameter of $(X; U;\G)$ denoted by $\mu_*(X;U;\G)$ is given by
\begin{align}\label{incoherence}
\mu_*(X;U;\G) \equiv |\hSigma\G - U|_\infty\,,
\end{align} 
where $\hSigma = (X^\sT X)/n$ is the sample covariance of $X$.
The minimum generalized coherence of $(X;U)$ is $\mu_{\min}(X;U) = \min_{\G\in \reals^{p\times k}} \mu_*(X; U;\G)$. 
\end{definition}
Note that choosing $\mu\ge \mu_{\min}(X;U)$, the optimization~\eqref{OPT:M-n} becomes feasible.

We take a minimax perspective and require that the probability of type I error (false positive) to be controlled uniformly over $s_0$-sparse vectors.

For a testing rule $R\in \{0,1\}$ and a set $\Omega_0$, we define
\begin{align}
\alpha_n(R) \equiv \sup \Big\{\prob_{\th_0}(R = 1):\,\, \th_0\in \Omega_0,\; \|\th_0\|_0 \le s_0(n) \Big\}\,.
\end{align}

Our first result shows validity of our test for general set $\Omega_0$ under deterministic designs.

\begin{thm}\label{thm:typeI}
Consider a sequence of design matrices $X\in\reals^{n\times p}$, with dimensions $n\to \infty$, $p = p(n) \to \infty$ satisfying the following assumptions. 
For each $n$, the sample covariance $\hSigma  = (X^\sT X)/n$ satisfies compatibility condition for the set
$S_0 = \supp(\th_0)$, with a constant $\phi_0>0$.  Also, assume that $K\ge \max_{i\in[p]} \hSigma_{ii}$ for some constant $K >0$. Also consider a sequence of matrices $U\in \reals^{p\times k}$, with fixed $k$ and $p = p(n)\to \infty$, such that $U^\sT U = I_k$. 

Consider the linear regression~\eqref{eq:NoisyModel} and let $\Lth$ and $\hsigma$ be obtained by scaled Lasso, given by~\eqref{scaledLasso},
with $\lambda = c\sqrt{(\log p)/n}$. Construct a debiased estimator $\dgamma$ as in~\eqref{dgamma} using 
$\mu \ge \mu_{\min}(X;U)$, where $\mu_{\min}(X;U)$ is the minimum generalized coherence parameter as per Definition~\ref{mu*}, and suppose that Assumption \ref{ass:var} holds. Choose $c^2 > 32K$
 and suppose that $s_0 = o(\min\{{1}/(\mu\sqrt{\log p}),{n}/{\log p}\})$.
For the test $R_X$ defined in Equation~\eqref{detection}, and for any $\alpha \in [0,1]$, we have
\begin{align}
\underset{n\to \infty}{\lim\sup}\;\; \alpha_n(R_X) \le \alpha\,.
\end{align}
\end{thm}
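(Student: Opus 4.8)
The plan is to exploit the defining optimality of the $\ell_\infty$-projection estimator $\hthp$ together with the bias–variance split of Lemma~\ref{propo:bias-size2}. Under $\tilde H_0$ the true parameter $\th_0\in\Omega_0$ is feasible for~\eqref{OPT1-n}, so optimality of $\hthp$ immediately gives
\begin{align*}
T_n = \|\Dn(\dgamma - U^\sT \hthp)\|_\infty \;\le\; \|\Dn(\dgamma - U^\sT \th_0)\|_\infty\,.
\end{align*}
Substituting $\sqrt{n}(\dgamma - U^\sT\th_0) = Z+\Delta$ from Lemma~\ref{propo:bias-size2}, where $Z\sim\normal(0,\sigma^2\G^\sT\hSigma\G)$ conditionally on $X$, and applying the triangle inequality yields
\begin{align*}
T_n \;\le\; \frac{1}{\sqrt{n}}\|\Dn Z\|_\infty + \frac{1}{\sqrt{n}}\|\Dn \Delta\|_\infty\,.
\end{align*}
I would then control the two pieces separately: the first is a conservatively standardized Gaussian maximum matched to the Bonferroni threshold, and the second a vanishing bias.

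To dispose of the bias, I would use that the regularizer $10^{-4}\id_k$ in~\eqref{Q} forces $Q_{ii}\ge 10^{-4}\hsigma^2/n$, so $\Dn_{ii}=Q_{ii}^{-1/2}\le 100\sqrt{n}/\hsigma$ for every $i$. Combining this with the high-probability bound $\|\Delta\|_\infty \le c\mu\sigma s_0\sqrt{\log p}/\phi_0^2$ of Lemma~\ref{propo:bias-size2} gives
\begin{align*}
\frac{1}{\sqrt{n}}\|\Dn\Delta\|_\infty \;\le\; \frac{100}{\hsigma}\,\|\Delta\|_\infty \;\le\; \frac{100\,c\,\sigma}{\hsigma\,\phi_0^2}\,\mu s_0\sqrt{\log p}\,,
\end{align*}
which tends to zero under the hypothesis $s_0=o(1/(\mu\sqrt{\log p}))$, provided $\hsigma$ is bounded away from $0$. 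Thus the $10^{-4}$ term serves a dual role: it caps $\Dn_{ii}$ from above here, and below it keeps each standardized Gaussian coordinate sub-standard-normal.

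For the Gaussian term I would write $\Dn=(\sigma/\hsigma)\Dn^*$, where $\Dn^*$ is the same diagonal matrix built with the true $\sigma$ in place of $\hsigma$. Conditionally on $X$, each coordinate $\tfrac{1}{\sqrt{n}}\Dn^*_{ii}Z_i$ is centered Gaussian with variance $(\G^\sT\hSigma\G)_{ii}/\big((\G^\sT\hSigma\G)_{ii}+10^{-4}\big)\le 1$, hence stochastically dominated by $|\normal(0,1)|$. A union bound over the $k$ coordinates against $z_{\alpha/(2k)}$ then gives
\begin{align*}
\prob\!\Big(\tfrac{1}{\sqrt{n}}\|\Dn^* Z\|_\infty \ge z_{\alpha/(2k)}\Big) \;\le\; k\cdot 2\big(1-\Phi(z_{\alpha/(2k)})\big) \;=\; \alpha\,,
\end{align*}
exactly matching the calibration of~\eqref{detection}.

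The main obstacle is that $\Dn$ depends on $\hsigma$, hence on the noise $W$, so $\Dn$ and $Z$ are not independent and the clean per-coordinate bound applies only to the oracle $\Dn^*$. I would resolve this via uniform consistency of the scaled Lasso, $\hsigma/\sigma\to 1$ in probability over $s_0$-sparse signals (a consequence of the compatibility condition), which both justifies the lower bound on $\hsigma$ used above and lets me replace $\Dn$ by $(1+o_P(1))\Dn^*$. Fixing $\eta>0$, on the intersection of the event of Lemma~\ref{propo:bias-size2} with $\{|\hsigma/\sigma-1|\le\eta\}$ one has $T_n\le(1+2\eta)\tfrac{1}{\sqrt{n}}\|\Dn^* Z\|_\infty+\delta_n$ with $\delta_n\to 0$, so
\begin{align*}
\prob(T_n\ge z_{\alpha/(2k)}) \;\le\; 2k\left(1-\Phi\!\left(\frac{z_{\alpha/(2k)}-\delta_n}{1+2\eta}\right)\right) + o(1)\,.
\end{align*}
Letting $n\to\infty$ and then $\eta\to 0$, continuity of $\Phi$ drives the right-hand side to $2k(1-\Phi(z_{\alpha/(2k)}))=\alpha$. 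Since none of these bounds depend on $\th_0$ beyond its support size and the uniform compatibility constant $\phi_0$, the estimate is uniform over $\{\th_0\in\Omega_0:\|\th_0\|_0\le s_0\}$, yielding $\underset{n\to\infty}{\lim\sup}\,\alpha_n(R_X)\le\alpha$.
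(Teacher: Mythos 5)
Your proposal is correct and follows essentially the same route as the paper: feasibility of $\th_0$ under $H_0$ plus the decomposition of Lemma~\ref{propo:bias-size2}, the bound $\Dn_{ii}\le 100\sqrt{n}/\hsigma$ from the $10^{-4}$ regularizer to kill the bias under $s_0=o(1/(\mu\sqrt{\log p}))$, scaled-Lasso consistency of $\hsigma$ (the paper's Lemma~\ref{ssLasso}, requiring $s_0=o(n/\log p)$) to swap $\hsigma$ for $\sigma$, and a Bonferroni bound over the $k$ coordinates followed by an $\epsilon$-slack limit using continuity of $\Phi$. Your observation that the standardized coordinates have variance $(\G^\sT\hSigma\G)_{ii}/((\G^\sT\hSigma\G)_{ii}+10^{-4})\le 1$ and are only stochastically dominated by $|\normal(0,1)|$ is in fact slightly more careful than the paper's statement that $\tZ_i\sim\normal(0,1)$, but the union bound goes through identically in both versions.
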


\bigskip

We next prove validity of our test for general set $\Omega_0$ under random designs.

\begin{thm}\label{thm:typeI-random}
Let $\Sigma\in \reals^{p\times p}$ such that $\sigma_{\min}(\Sigma) \ge C_{\min}>0$ and $\sigma_{\max}(\Sigma) \le C_{\max} <\infty$ and $\max_{i\in[p]} \Sigma_{ii}\le 1$. 
Suppose that $X\Sigma^{-1/2}$ has independent sub-gaussian rows, with mean zero and sub-gaussian norm $\|\Sigma^{-1/2} x_1\|_{\psi_2} = \kappa$, for some constant $\kappa>0$.

Let $\Lth$ and $\hsigma$ be obtained by scaled Lasso, given by~\eqref{scaledLasso},
with $\lambda = c\sqrt{(\log p)/n}$, and $c^2 > 48$. Consider an arbitrary $U\in \reals^{p\times k}$, with $U^\sT U = I$, that is independent of the samples $\{(x_i,y_i)\}_{i=1}^n$. Construct a debiased estimator $\dgamma$ as in~\eqref{dgamma} with 
$\mu =a\sqrt{(\log p)/n}$ and $a^2> 48e^2\kappa^4C_{\max}/C_{\min}$. 
In addition, suppose that $\lim \sup_{n\to \infty}  \mu  (\max_{i\in [k]}\, \|u_i\|_1)\le c'$, for some constant $0<c'<1$ and  $s_0 = o(\sqrt{n}/{\log p})$.

For the test $R_X$ defined in Equation~\eqref{detection}, and for any $\alpha \in [0,1]$, we have
\begin{align}
\underset{n\to \infty}{\lim\sup}\;\; \alpha_n(R_X) \le \alpha\,.
\end{align}
\end{thm}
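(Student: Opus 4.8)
The plan is to derive Theorem~\ref{thm:typeI-random} from the deterministic Theorem~\ref{thm:typeI} by showing that, with probability tending to one over the random design $X$, every hypothesis of the deterministic theorem holds with appropriate (deterministic) constants. Concretely, I would introduce a ``good event'' $\event$, measurable with respect to $X$, on which (i) $\max_{i\in[p]}\hSigma_{ii}\le K$ for a fixed constant $K>1$; (ii) $\hSigma$ satisfies the compatibility condition for $S_0=\supp(\th_0)$ with a constant $\phi_0$ depending only on $C_{\min}$; and (iii) the program~\eqref{OPT:M-n} is feasible for the prescribed $\mu=a\sqrt{(\log p)/n}$, i.e.\ $\mu_{\min}(X;U)\le a\sqrt{(\log p)/n}$. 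Conditioning on $X$, the (quantitative form of the) deterministic theorem then gives $\prob_w(R_X=1\mid X)\le \alpha+o(1)$ uniformly over $X\in\event$, with the $o(1)$ governed by the bias bound of Lemma~\ref{propo:bias-size2} and a Gaussian comparison for the variance term. Integrating over $X$ and bounding the contribution of $\event^c$ by one yields $\prob_{\th_0}(R_X=1)\le \alpha+o(1)+\prob(\event^c)$; taking the supremum over $s_0$-sparse $\th_0\in\Omega_0$ and then $\limsup_n$, using $\prob(\event^c)\to0$, gives the claim.

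The first two ingredients are standard concentration facts. For (i), each $\hSigma_{ii}=n^{-1}\sum_\ell X_{\ell i}^2$ is an average of independent subexponential variables with mean $\Sigma_{ii}\le1$, so a Bernstein bound together with a union bound over $i\in[p]$ yields $\max_i\hSigma_{ii}\le K$ with probability $1-o(1)$ whenever $\log p=o(n)$; the constant $c>32K$ is then fixed accordingly. For (ii), since $\sigma_{\min}(\Sigma)\ge C_{\min}$ the population covariance satisfies compatibility with constant at least $\sqrt{C_{\min}}$, and I would invoke the standard transfer of the restricted-eigenvalue/compatibility property from $\Sigma$ to $\hSigma$ for subgaussian designs, which holds once $n\gtrsim s_0\log p$. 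The assumed scaling $s_0=o(\sqrt n/\log p)$ forces $s_0\log p=o(\sqrt n)=o(n)$, so this is comfortably satisfied.

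The crux, and the main obstacle, is ingredient (iii), the bound on the minimum generalized coherence. The natural feasible point is the population decorrelating matrix $\G^\ast=\Sigma^{-1}U$, for which $\hSigma\G^\ast-U=(\hSigma-\Sigma)\Sigma^{-1}U$, so it suffices to prove $|(\hSigma-\Sigma)\Sigma^{-1}U|_\infty\le a\sqrt{(\log p)/n}$ with high probability. The $(j,i)$ entry equals $n^{-1}\sum_\ell\big(X_{\ell j}\langle x_\ell,\Sigma^{-1}u_i\rangle - e_j^\sT u_i\big)$, a centered average of products of two subgaussian linear functionals of $x_\ell$: writing $x_\ell=\Sigma^{1/2}z_\ell$ with $\|z_\ell\|_{\psi_2}=\kappa$, the factors have subgaussian norms at most $\kappa\sqrt{\Sigma_{jj}}\le\kappa$ and $\kappa\|\Sigma^{-1/2}u_i\|\le\kappa/\sqrt{C_{\min}}$, so each summand is subexponential with $\psi_1$-norm $\lesssim\kappa^2/\sqrt{C_{\min}}$. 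A Bernstein inequality then gives a deviation of order $\sqrt{(\log p)/n}$ per entry, and a union bound over the at most $pk\le p^2$ entries fixes a constant $a$ (depending only on $\kappa,C_{\min}$) making this event have probability $1-o(1)$. It is essential here that $U$ is independent of the data, so all these bounds are valid conditionally on $U$; the delicate point is obtaining subexponential concentration of these quadratic-type statistics \emph{uniformly} over all $p\times k$ coordinates while keeping the resulting coherence rate exactly $a\sqrt{(\log p)/n}$.

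Finally I would check that the deterministic theorem's sample-size requirement $s_0=o\big(1/(\mu\sqrt{\log p}),\,n/\log p\big)$ is implied by the random-design hypothesis. With $\mu=a\sqrt{(\log p)/n}$ one has $1/(\mu\sqrt{\log p})=\sqrt n/(a\log p)$, so $s_0=o(\sqrt n/\log p)$ matches the first term, and the second follows from $\sqrt n/\log p=o(n/\log p)$. Intersecting the three high-probability events of (i)--(iii) to form $\event$ and applying Theorem~\ref{thm:typeI} on $\event$ as above then completes the proof.
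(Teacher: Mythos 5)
Your proposal is correct and follows essentially the same route as the paper: reduce to the deterministic Theorem~\ref{thm:typeI} on a high-probability design event combining the compatibility condition (via the standard subgaussian-design transfer, which the paper cites from Rudelson--Zhou) with feasibility of~\eqref{OPT:M-n} at $\mu = a\sqrt{(\log p)/n}$, the latter proved exactly as in the paper's Proposition~\ref{pro:mu-random} by taking $\Sigma^{-1}U$ as the feasible point and applying a Bernstein bound for subexponential products with a union bound over entries. Your explicit conditioning-and-integration step and the verification that $s_0 = o(\sqrt{n}/\log p)$ implies the deterministic theorem's requirement $s_0 = o(1/(\mu\sqrt{\log p}), n/\log p)$ are details the paper treats tersely, but they do not constitute a different argument.
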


We refer to Section~\ref{proof:theorems} for the proof of Theorem~\ref{thm:typeI} and~\ref{thm:typeI-random}.

%
 
\subsection{Statistical power}
We next analyze the statistical power of our test. Before proceeding, note that without further assumption, we cannot achieve any non-trivial power, namely, power of $\alpha$ which is obtained by a rule that randomly rejects null hypothesis with probability $\alpha$. Indeed, by choosing $\th_0\notin \Omega_0$ but arbitrarily close to $\Omega_0$, once can make $H_0$ essentially indistinguishable from $H_A$. 
Taking this point into account, for a set $\Omega_0 \subseteq \reals^p$ and $\th_0\in \reals^p$, we define the distance $\de(\th_0,\Omega_0)$ as  
\begin{align}
\de(\th_0,\Omega_0;U) = \inf_{\theta\in \Omega_0} \|U^\sT(\th - \theta_0)\|_\infty\,.
\end{align}
We will assume that, under alternative hypothesis, $\de(\th_0,\Omega_0;U) \ge \eta$ as well. Define
\begin{align}
\beta_n(R) \equiv \sup\Big\{\prob_{\th_0}(R = 0):\,\, \|\th_0\|_0\le s_0(n), \; \de(\th_0,\Omega_0;U) \ge \eta\Big\}
\end{align}

Quantity $\beta_n$ is the probability of type II error (false negative) and $1-\beta_n$ is the statistical power of the test.

\begin{thm}\label{thm:power}
Let $R_X$ be the test defined in Equation~\eqref{detection}. Under the conditions of Theorem~\ref{thm:typeI-random}, for all $\alpha \in [0,1]$:
\begin{align}
\underset{n\to \infty}{\lim\inf}\;\; \frac{1-\beta_n(R_X)}{1-\beta_n^*(\eta)} \ge 1\,, \quad 1-\beta_n^*(\eta) \equiv \myF\left({\alpha},\frac{\sqrt{n}\eta}{\hsigma m_0}, k\right)_+
\end{align}
where we define $m_0$ as 
\begin{align}\label{def:m0}
m_0 \equiv \max_{i\in [k]}\, (u_i^\sT \Sigma^{-1} u_i )^{1/2}\,.
\end{align} 
Further, for $\alpha \in [0,1]$, $x\in \reals_+$, and integer $k \ge 1$, the function $F(\alpha, x, k)$ is defined as follows:
\begin{align}\label{eq:F-def}
\myF(\alpha,x,k ) = 1 - k\Big\{\Phi\left(x+\Phi^{-1}\left(1-\frac{\alpha}{2k}\right) \right) - \Phi\left(x - \Phi^{-1}\left(1-\frac{\alpha}{2k}\right) \right) \Big\}\,.
\end{align}
\end{thm}
The proof of Theorem~\ref{thm:power} is given in Section~\ref{proof:power}.

 Note that for any fixed $k\ge1$ and $\alpha>0$, the function $x\mapsto \myF(\alpha, x, k)$ is continuous and monotone increasing, i.e., the larger $\de(\th_0,\Omega_0)$ the higher power is achieved. Also,
 in order to achieve a specific power $\beta>\alpha$, our scheme requires $\eta>c_\beta m_0 (\sigma/\sqrt{n}) $, for some
constant $c_\beta$ that depends on  the desired power $\beta$. In addition, if $\eta \sqrt{n}\to \infty$, the rule achieves asymptotic power one.
 
 It is worth noting that in case of testing individual parameters $H_{0,i}: \th_{0,i} = 0$ (corresponding to $\Omega_0 = \{\th\in \reals^p:\, \th_{i} = 0\}$ and $k=1$), we recover the power lower bound established in~\cite{javanmard2014confidence}, which by comparing to the minimax trade-off studied in~\cite{javanmard2013hypothesis}, is optimal up to a constant. 
 
\section{Choice of subspace $U$}
\label{sec:choice-U}
Before we start this section, let us stress again that by Theorems~\ref{thm:typeI} and~\ref{thm:typeI-random}, the proposed testing rule controls type-I error below the desired level $\alpha$, \emph{for any choice of $U\in \reals^{p\times k}$, with $1\le k\le p$ and $U^\sT U = \id$ that is independent of $X$}. Here, we provide guidelines for choosing $U$ that yields high power. To this end we use the result of Theorem~\ref{thm:power}.

Note that
\[
m_0\le \max_{i\in [k]} (C_{\min}^{-1} \|u_i\|^2 )^{1/2} = C_{\min}^{-1/2} \,,
\]
where we recall that $\sigma_{\min}(\Sigma) > C_{\min}>0$ and $\|u_i\| = 1$, for $i\in [k]$. 
Hence,
\begin{align}\label{power-LB1}
\myF\left({\alpha},\frac{\sqrt{n}\,\de(\th_0,\Omega_0;U)}{\hsigma m_0}, k\right) \ge \myF\left({\alpha},\frac{1}{\hsigma} \sqrt{n C_{\min}}\,\de(\th_0,\Omega_0;U), k\right)\,.
\end{align}  
We propose to choose $U$ by maximizing the right-hand side of~\eqref{power-LB1}, which by Theorem~\ref{thm:power} serves as a lower bound for the power of the test. 
%
%
Nevertheless, the above optimization involves $\theta_0$ which is unknown. To cope with this issue, we use the Lasso estimate $\htheta$ via the following procedure:
\begin{enumerate}
 \item We randomly split the data $(y,X)$ into two subsamples $(y^{(1)}, X^{(1)})$ and $(y^{(2)}, X^{(2)})$ each with sample size $n_0 = n/2$. We let $\hth^{(1)}$ be the optimizer of the scaled Lasso applied to $(y^{(1)},X^{(1)})$.
 \item We choose $U\in \reals^{p\times k}$ by solving the following optimization:
 \begin{align}\label{opt:final0}
 \underset{k \in [p],U\in \reals^{p\times k}, U^\sT U = \id}{\text{maximize}} \,\myF\left({\alpha},\frac{1}{\hsigma} \sqrt{n C_{\min}}\,\de(\th_0,\Omega_0;U), k\right) \,.
 \end{align} 
 \item We construct the debiased estimator using the data $(y^{(2)},X^{(2)})$. Specifically, set $\hSigma^{(2)} \equiv (1/n_0) (X^{(2)})^\sT (X^{(2)})$ and let $g_i$  be the solution of the following optimization problems for each $1\le i\le k$:
 \begin{align}
\begin{split}
&\text{minimize} \quad g^\sT \hSigma^{(2)} g\\
&\text{subject to} \;\; \|\hSigma^{(2)} g - u_i\|_\infty \le \mu
\end{split}
\end{align}
Define the decorrelating matrix $\G = [g_1|\dotsc|g_k]\in \reals^{p\times k}$ and let $\hth^{(2)}$ be the optimizer of the scaled Lasso applied to $(y^{(2)},X^{(2)})$.  Let
\begin{align}\label{dgamma-simple}
\dgamma = U^\sT \hth^{(2)} + \frac{1}{n_0} G^\sT (X^{(2)})^\sT (y^{(2)} - X^{(2)}\hth^{(2)})\,.
\end{align}
\item Set $Q\equiv (\hsigma^2/n) (G^\sT \hSigma^{(2)} G)$ and $D \equiv \diag(\{Q_{ii}\}^{-1/2})$. Find the $\ell_\infty$ projection as
\begin{align}\label{my-Tn}
\begin{split}
\hthp = & \quad \underset{\theta \in \reals^p}{\text{argmin}} \quad  \|D (\dgamma - U^\sT \theta)\|_\infty\,
\quad \text{subject to} \;\;\; \th\in \Omega_0\,. 
\end{split}
\end{align}
\item Define the test statistics $T_n = \|D(\dgamma-U^\sT \hthp)\|_\infty$. The testing rule is given by
\begin{align}\label{detection2}
R_X(y) = \begin{cases}
1 & \text{ if } T_n \ge z_{\alpha/(2k)} \quad \quad (\text{reject }{H}_0)\\
0 & \text{ otherwise } \quad \quad \; \; (\text{fail to reject }{H}_0).
\end{cases}
\end{align}
 \end{enumerate}
 
 Note that the data splitting above ensures that $U $ is independent of $(y^{(2)},X^{(2)})$, which is required for our analysis (See Theorems~\ref{thm:typeI}, \ref{thm:typeI-random} and \ref{thm:power}.) 
\subsection{Convex sets $\Omega_0$}\label{sec:convex}
When the set $\Omega_0$ is convex, step (2) in the above procedure can be greatly simplified. Indeed, we can only focus on $k = 1$ in this case. 

\begin{lemma}\label{lem:k1}
Define the set $\cJ$ of matrices as 
\begin{align}
\cJ\equiv \arg\max_{U\in \reals^{p\times k}} \myF\left(\alpha,\frac{1}{\hsigma} \sqrt{n C_{\min}}\, \de(\hth^{(1)},\Omega_0;U), k\right) \quad \text{ subject to }\quad  1\le k\le p, \;\; U^\sT U = \id_k\,. 
\end{align} If $\Omega_0$ is convex then there exists a unit norm $u^*\in \reals^{p\times 1}$ such that $u^*\in \cJ$.  
\end{lemma}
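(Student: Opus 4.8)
The plan is to prove that the global maximum over all admissible pairs $(k,U)$ is already attained at $k=1$, by decoupling the problem into a geometric part and an analytic part. Write $a=\hth^{(1)}$, and note that replacing $\Omega_0$ by its closure changes neither $\de(\cdot,\Omega_0;U)$ nor any distance appearing below, so I may assume $\Omega_0$ is a nonempty closed convex set. The two ingredients are: (i) convexity forces the separation $\de(a,\Omega_0;U)$ to be maximized by a single direction; and (ii) for a fixed separation the map $k\mapsto\myF(\alpha,x,k)$ is largest at $k=1$. Since $x\mapsto\myF(\alpha,x,k)$ is monotone increasing (as recorded right after Theorem~\ref{thm:power}), these two facts combine immediately.

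\emph{Geometric step.} Let $\theta^*=\argmin_{\theta\in\Omega_0}\|\theta-a\|_2$ be the Euclidean projection of $a$ onto $\Omega_0$, put $r^*=\|a-\theta^*\|_2$, and, when $r^*>0$, set $u^*=(a-\theta^*)/r^*$. The projection inequality $\langle a-\theta^*,\theta-\theta^*\rangle\le0$ for all $\theta\in\Omega_0$ yields $(u^*)^\sT(\theta-a)\le-r^*$, so $|(u^*)^\sT(\theta-a)|\ge r^*$ with equality at $\theta=\theta^*$; hence $\de(a,\Omega_0;u^*)=r^*$. Conversely, for any $U=[u_1|\cdots|u_k]$ with $U^\sT U=\id_k$, bounding the infimum by its value at the feasible point $\theta=\theta^*$ and using $\|u_i\|_2=1$ with Cauchy--Schwarz gives
\[
\de(a,\Omega_0;U)\;\le\;\max_{i\in[k]}|u_i^\sT(\theta^*-a)|\;\le\;\max_{i\in[k]}\|u_i\|_2\,\|\theta^*-a\|_2\;=\;r^* .
\]
Thus $\de(a,\Omega_0;U)\le r^*$ for every admissible $U$, and this bound is met by the single unit vector $u^*$. (If $r^*=0$, i.e.\ $a\in\Omega_0$, then $\de(a,\Omega_0;U)=0$ for all $U$ and any unit vector may be taken as $u^*$.)

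\emph{Analytic step.} Fix $x\ge0$ and write $\myF(\alpha,x,k)=1-k\,P_x(z_k)$, where $P_x(z)=\Phi(x+z)-\Phi(x-z)$ and $z_k=\Phi^{-1}(1-\tfrac{\alpha}{2k})$. For $\alpha\in[0,1]$ and $k\ge1$ one has $z_k\ge0$, with $z_k$ non-decreasing in $k$, while $z\mapsto P_x(z)$ is non-decreasing (its derivative is $\phi(x+z)+\phi(x-z)\ge0$). Hence $P_x(z_k)\ge P_x(z_1)\ge0$, and since $k\ge1$ we get $k\,P_x(z_k)\ge P_x(z_1)$, which is exactly $\myF(\alpha,x,1)\ge\myF(\alpha,x,k)$.

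\emph{Conclusion.} Let $x_U=\tfrac{\sqrt n}{\hsigma\tilde{m}_0}\de(a,\Omega_0;U)$ and $x^*=\tfrac{\sqrt n}{\hsigma\tilde{m}_0}r^*$. By the geometric step $x_U\le x^*$ with $x_{u^*}=x^*$, and chaining the monotonicity in $x$ with the analytic step,
\[
\myF(\alpha,x_U,k)\;\le\;\myF(\alpha,x^*,k)\;\le\;\myF(\alpha,x^*,1)\;=\;\myF(\alpha,x_{u^*},1),
\]
so $u^*$ (a $k=1$ choice) attains the maximum defining $\cJ$, i.e.\ $u^*\in\cJ$. The one genuinely substantive point is the geometric step: it is precisely convexity, via the projection/separating-hyperplane inequality together with Cauchy--Schwarz, that prevents any higher-dimensional $U$ from separating $a$ from $\Omega_0$ more effectively than the single optimal direction. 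The remaining monotonicities of $P_x$ and of $z_k$ are elementary.
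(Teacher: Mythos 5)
Your proof is correct, and it takes a genuinely different route from the paper's. The paper establishes the key geometric fact --- that some single unit vector separates at least as well as any orthonormal $U\in\reals^{p\times k}$ --- by writing $\|U^\sT(\theta-\hth^{(1)})\|_\infty$ as a support function over the $\ell_1$ ball, invoking Von Neumann's minimax theorem (this is where convexity of $\Omega_0$ enters) to swap $\inf_\theta$ and $\max_v$, and then noting that $\tilde v=Uv$ satisfies $\|\tilde v\|_2\le\|v\|_1\le 1$; it then concludes via Lemma~\ref{lem:G-dec}, the strict decrease of $\myF$ in $k$. You instead take the Euclidean projection $\theta^*$ of $\hth^{(1)}$ onto $\Omega_0$ and use the obtuse-angle inequality plus Cauchy--Schwarz to show $\de(\hth^{(1)},\Omega_0;U)\le r^*$ for every admissible $U$ while $u^*=(\hth^{(1)}-\theta^*)/r^*$ attains $r^*$, and you prove only the dominance $\myF(\alpha,x,1)\ge\myF(\alpha,x,k)$ rather than strict monotonicity in $k$. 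Your route buys three things: it is more elementary (no minimax theorem, whose compactness-type hypotheses the paper does not actually verify for a possibly unbounded $\Omega_0$); it exhibits an explicit maximizer, so attainment of the supremum defining $\cJ$ --- which the paper's comparison of the two optimal values tacitly presupposes --- is automatic; and your $u^*$ is precisely the direction~\eqref{eq:u-opt} that the paper derives separately afterwards, so your argument unifies the lemma with the guideline of Section~\ref{sec:choice-U}. What the paper's route buys in exchange is the strictness of Lemma~\ref{lem:G-dec}, which gives the stronger conclusion (noted at the end of its proof) that \emph{every} element of $\cJ$ is one-dimensional; your non-strict analytic step shows $u^*\in\cJ$ but does not rule out a tie with some $k\ge 2$ subspace. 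Since the lemma as stated only asserts existence, this is a difference in strength, not a gap.
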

Proof of Lemma~\ref{lem:k1} is given in Appendix~\ref{proof:lem:k1}.

Focusing on $k=1$, optimization~\eqref{opt:final0} reduces to the following optimization over $u\in \reals^{p\times 1}$:
\begin{align}
u\in \arg\max_{u\in \reals^{p}, \|u\|_2 = 1} \myF\left(\alpha,\frac{1}{\hsigma} \sqrt{n C_{\min}}\, \de(\hth^{(1)},\Omega_0;u), 1\right) \,.
\end{align}
The function $x\mapsto \myF(\alpha,x,k)$ is monotone increasing in $x$ and by substituting for $\de(\theta_0,\Omega_0;u)$, this becomes equivalent to the following problem:
\begin{align}
\underset{u\in \reals^{p}, \|u\|_2 \le 1}{\text{maximize}} \, \inf_{\theta\in \Omega_0} |u^\sT (\theta - \hth^{(1)})| \,.
\end{align}
Given that the objective is linear in $u$ and $\theta$, and the set $\Omega_0$ is convex we can apply the Von Neumann's minimax theorem and change the order of $\max$ and $\min$:
\begin{align}
\inf_{\theta\in \Omega_0} \underset{u\in \reals^{p}, \|u\|_2 \le 1}{\max} \,  |u^\sT (\theta - \hth^{(1)})| \,.
\end{align}
Denote the orthogonal projection of $\hth^{(1)}$ onto $\Omega_0$ by $\proj_{\Omega_0}(\hth^{(1)}) = \arg\min_{\th\in\Omega_0} \|\th - \hth^{(1)}\|_2$. Then it is straightforward to see that the optimal $u$ is given by 
\begin{align}\label{eq:u-opt}
u = \frac{\proj^\perp_{\Omega_0}(\hth^{(1)}) }{\|\proj^\perp_{\Omega_0}(\hth^{(1)}) \|}\,,
\end{align}
with $\proj^\perp_{\Omega_0}(\hth^{(1)}) = \hth^{(1)} - \proj_{\Omega_0}(\hth^{(1)})$.

We remind again that the type I error is controlled at the desired level for any $U\in \reals^{p\times k}$ with $U^\sT U = \id$ that is independent of $(y,X)$. The choice of $u$ in~\eqref{eq:u-opt} is a guideline for increasing power in case of convex sets $\Omega_0$.

\rev{
\begin{remark}\label{rem:example-nonCvx}
Let us stress again that convexity assumption of set $\Omega_0$ is crucial in deriving the recipe~\eqref{eq:u-opt}. To build further insight, we provide a concert example of a non-convex $\Omega_0$ and argue that $k=1$ is not the right choice.   Let $\Omega_0 = \Omega_1\cup \Omega_2$, where $\Omega_i = \{x\in \reals^p:\, |x_i|\le a, \, |x_j|\le 3a, \, \text{ for } j\neq i\}$, for $i=1, 2$ and a fixed constant $a>0$. Let $\theta_0 = (2a, 2a, 0,\dotsc, 0)\in \reals^p$. Observe that $\Omega_0$ is not convex and $\theta_0 \notin \Omega_0$. By choosing $k= p$ and $U = \id_{p\times p}$, we have $\de(\theta_0, \Omega_0,U) = a$ and hence our method achieves non-trivial power. However, we argue that setting $k=1$, our method cannot do better than random guessing.  Specifically, we show that for any vector $u\in \reals^p$, we $\de(\theta_0, \Omega_0, u) = 0$. By symmetry, assume that $|u_1|\le |u_2|$. Note that the point $z_0 = \pm(a \sign(u_1), 3a \sign(u_2), \dotsc, 3a\sign(u_p))\in \Omega_1\subset \Omega_0$. Further, $u^\sT z_0 = \pm(a|u_1| + 3a |u_2| + \dotsc + 3a|u_p|)$. By convexity of $\Omega_1$, we have that $\proj_u(\Omega_1)\supseteq A$ where $A = \{\alpha u: |\alpha| \le |u^\sT z_0|\}$.  In addition, we have $u^\sT \theta_ 0 = 2a (u_1+u_2)$ and using the assumption $|u_1|\le |u_2|$, we get $|u^\sT \theta_0| \le |u^\sT z_0|$. Therefore, $\proj_u(\theta_0) \in A \subseteq \proj_u(\Omega_1) \subset \proj_u(\Omega_0)$. This implies that $\de(\theta_0, \Omega_0, u) = 0$, meaning that we cannot do better than random guessing if the inference is done in the on-dimensional projected space. We refer to Figure~\ref{fig:schematic} for a schematic illustration of this example in $p=2$.
\begin{figure}[]
    \centering{
        \includegraphics[width = 8cm]{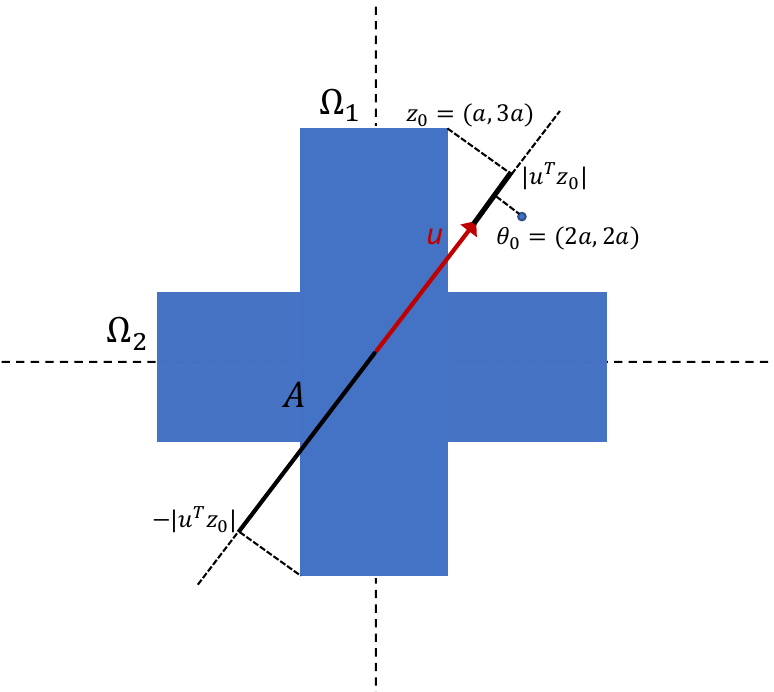}
        \caption{Illustration of the example of non-convex $\Omega_0$ discussed in Remark~\ref{rem:example-nonCvx} for $p=2$ }\label{fig:schematic}
        }
\end{figure}
\end{remark}
}

\rev{
\section{Approximate sparsity}\label{sec:app-sparsity}

With the aim of broadening the application of our proposed method, we relax the sparsity assumption of the model to a so-called approximate sparsity structure. 
Consider the linear model 
\begin{align}\label{eq:app-sparsity-model}
y = X\theta_*+w\,,
\end{align}
with $w\sim \normal(0,\sigma^2 \id_{n\times n})$, and $\theta_*\in \reals^p$ the unknown model parameters that is not necessarily sparse. 
However, we assume that there exists at least one sparse linear combination of the covariates that gets close to the true signal. This is formally stated as the approximate sparsity stated below, which is similar to the one introduced by
\cite{belloni2012sparse}.
\medskip

\begin{assumption}\label{ass:app-sparsity}
{\bf (Approximately Sparse Model).} The signal $X\theta_*$ is well approximated by a linear combination of  unknown $s_0\ge 1$ covariates:
\begin{align}
X\theta_* = X\theta_0 + r\,, \quad \|r\| =o_P(1)\,.
\end{align}
\end{assumption}
The approximate sparsity assumption in~\cite{belloni2012sparse} is weaker than the one we are imposing here, as the former allows for $\|r\| = O_P(\sqrt{s_0})$.

The next assumption is also introduced by~\cite{belloni2012sparse}, under the name of ``RF condition". This is basically an assumption on the moments of covariates and the noise component. In stating that we borrow the following empirical process notation from~\cite{belloni2012sparse}: 
$\E_n[f]\equiv \E_n[f(z_i)] \equiv \sum_{i=1}^n f(z_i)/n$ and $\bE[f] \equiv \E \E_n[f]  = \E \E_n [f(z_i)] = \sum_{i=1}^n \E[f(z_i)]/n$.

\begin{assumption}\label{ass:moment}
{\bf (Moment Condition).} Suppose that the following moment conditions holds:
\begin{itemize}
 \item[$(i)$] For a constant $C_2>0$, $\bE[y_i^2] + \bE[X_{ij}^2 y_i^2] + 1/\bE[X_{ij}^2 w_i^2] \le C_2$.
 \item[$(ii)$] We have $\max_{j\in [p]} \bE[|X_{ij}^3 w_i^3|]\le o(\sqrt{n/(\log p)^3})$, and also $s_0\log p = o(n)$.
 \item [$(iii)$] $\underset{i\in [n], j\in [p]}{\max}  X_{ij}^2 (s_0 \log p)/n \to 0$, in probability and $\underset{j\in [p]}{\max} |(\E_n - \bE) [X_{ij}^2w_i^2]| + |(\E_n - \bE) [X_{ij}^2y_i^2]| \to 0$, in probability. 
 \end{itemize}
\end{assumption}

The above moment condition was proposed in~\cite{belloni2012sparse} where they bound the estimate error of selection methods such as Lasso under approximate sparsity condition. Our lemma below provides a set of alternative conditions that, for sub-gaussian designs, imply the Moment condition~\ref{ass:moment}.
\begin{lemma}\label{lem:moment}
Suppose that the design $X$ has independent sub-gaussian centered rows with uniformly bounded sub-gaussian norm ($\|x_i\|_{\psi_2}\le C$). Assume that $y_i$ and $w_i$ have uniformly bounded conditional moments of order $4$, that is $\E(y_i^4|x_i) \le C'$ and $\E(w_i^4|x_i) \le C''$, for $i\in [n]$.  In addition, suppose that $s_0 = o(n/\log^2(p))$ and $\log p = o(n^{1/3})$. Then the Moment Condition~\ref{ass:moment} holds.
\end{lemma}
We refer to Appendix~\ref{proof:lem-moment} for the proof of Lemma~\ref{lem:moment}.

\medskip

\noindent{\bf Iterated Lasso.} Following~\cite{belloni2012sparse}, we consider a weighed Lasso estimator of $\theta_0$. Formally, let $\hth$ be given by
\begin{align}\label{eq:pen}
\hth = \arg\min_{\theta\in \reals^p} \Big\{\frac{1}{n} \|y - X\th\|^2 + \lambda \sum_{i=1}^p |\gamma_i \theta_i| \Big\}\,,
\end{align}
where the regularization $\lambda$ is chosen as
\begin{align}\label{lambda:iterated}
\lambda = \frac{2.2}{\sqrt{n}} \Phi^{-1} (1-0.1/(2p\log p))\,. 
\end{align}
The weights $\gamma_i$, $j\in [p]$ are ideally chosen as $\gamma_j = \sqrt{\E_n[X_{ij}^2 w_i^2]}$. But since the noise terms $w_i$ are unobserved this ideal option is not realizable. Hence, we use an iterative method proposed in~\cite{belloni2012sparse,belloni2014inference} to set the weights $\gamma_i$. (The resulting Lasso estimator $\hth$ is referred to as `iterated Lasso' in~\cite{belloni2012sparse,belloni2014inference}.) 
 The details of the procedure is described in Algorithm~\ref{alg:iterated-Lasso}. 

\begin{algorithm}[h]
\caption{\rev{Choosing weights in the iterated Lasso estimator}} \label{alg:iterated-Lasso}
\begin{algorithmic}[1]
\rev{
\REQUIRE response vector $y$, design matrix $X$, regularization parameter $\lambda$, number of iteration $K$.

\ENSURE estimator $\hth$

\STATE {\bf (initialization)} set $\gamma_j = \sqrt{\E_n[X_{ij}^2y_i^2]}$, for $j\in[p]$.

\FOR{$k = 1, 2, \dotsc, K$} 

\STATE compute $\hth$ estimator given by~\eqref{eq:pen}. 
\STATE update the weights as $\gamma_j = \sqrt{\E_n[X_{ij}^2 (y_i - x_i^\sT\hth)^2]}$.
\ENDFOR
}
\end{algorithmic}

\end{algorithm}

Our next theorem is analogous to Theorem~\ref{thm:typeI-random} and shows our procedure controls the type-I error for random designs under approximately sparse models.

\begin{thm}\label{thm:typeI-random-aps} 
Let $\Sigma\in \reals^{p\times p}$ such that $\sigma_{\min}(\Sigma) \ge C_{\min}>0$ and $\sigma_{\max}(\Sigma) \le C_{\max} <\infty$ and $\max_{i\in[p]} \Sigma_{ii}\le 1$. 
Suppose that the regression model~\eqref{eq:app-sparsity-model} is approximately sparse (Assumption~\ref{ass:app-sparsity}), and assume that the responses $y_i$ have uniformly bounded conditional moment of order 4, that is $\E(y_i^4|x_i)\le C'$ for $i\in [n]$ and a constant $C'>0$ independent of $n$.

Let $\hth$ be the iterated Lasso estimator using data $(y,X)$, given by~\eqref{eq:pen}. Consider an arbitrary $U\in \reals^{p\times k}$, with $U^\sT U = I$, that is independent of the samples $\{(x_i,y_i)\}_{i=1}^n$. Construct a debiased estimator $\dgamma$ as in~\eqref{dgamma} with 
$\mu =a\sqrt{(\log p)/n}$, and $a^2> 48e^2\kappa^4C_{\max}/C_{\min}$. 
In addition, suppose that $\lim \sup_{n\to \infty}  \mu  (\max_{i\in [k]}\, \|u_i\|_1)\le c'$, for some constant $0<c'<1$,  $s_0 = o(\sqrt{n}/{\log p})$ and $\log p = o(n^{1/3})$.

For the test $R_X$ defined in Equation~\eqref{detection}, and for any $\alpha \in [0,1]$, we have
\begin{align}
\underset{n\to \infty}{\lim\sup}\;\; \alpha_n(R_X) \le \alpha\,.
\end{align}
\end{thm}


We refer to Section~\ref{proof:thm-typeI-random-aps} for the proof of Theorem~\ref{thm:typeI-random-aps}.
}

\rev{
\section{Extension to Non-Gaussian Noise}\label{sec:non-gaussian}
Our analysis can be extended to the case of non-gaussian noise measurements. Specifically, suppose that the noise term $w_i$ satisfies 
\begin{align}\label{eq:noise-relax}
\E(w_i|X) = 0,\quad \E(w_i^2|X) = \sigma^2, \quad \E(|w_i|^{4+a} |X) \le B\,,
\end{align}
for some constants $a, B>0$,  and $1\le i\le n$.

Recall that our analysis is based on a bias-variance decomposition of the estimate $\dgamma$ as in Lemma~\ref{propo:bias-size2}. The bias term $\|\Delta\|_\infty$ can be bounded as
\[
\|\Delta\|_\infty \le \sqrt{n} \|G^\sT\hSigma - U\|_\infty \|\theta_0 - \hth\|_1\,.
\]
The first term does not involve the noise term $w$ and can be treated as before. For bounding $\|\theta_0-\hth\|_1$, we used the result of \cite[Theorem 1]{belloni2012sparse} (See Proposition~\ref{propo:aps} in the Appendix) that also applies to non-gaussian noise as long as the moment conditions (Assumption~\ref{ass:moment}) hold, which by Lemma~\ref{lem:moment}, for sub-gaussian designs it reduces to requiring the noise variables $w_i$ have bounded conditional moment of order 4.  

So the remaining part is characterizing the limiting distribution of $Z$. To this end, we will show that the Lindeberg condition holds and hence $Z$ admits an asymptotically normal distribution by virtue of central limit theorem. 

Similar to the approach taken in~\cite{javanmard2014confidence}, we slightly modify our construction of the decorrelating matrix $G$ to ensure the Lindeberg condition holds. Let $\G = [g_1|\dotsc|g_k]\in \reals^{p\times k}$, where each $g_i$ is obtained by solving the following optimization problems for each $1\le i\le k$:

\begin{align}\label{OPT:M-n-n}
\begin{split}
&\text{minimize} \quad g^\sT \hSigma g\\
&\text{subject to} \;\; \|\hSigma g - u_i\|_\infty \le \mu\\
&\quad \quad\quad \quad \quad \|Xg\|_\infty\le n^\beta\,, \quad \text{ for arbitrary fixed } 0<\beta<1/2\,.
\end{split}
\end{align}

Our following proposition shows that $Z$ admits an asymptotically normal distribution in the non-gaussian setting.
\begin{propo}\label{lindberg}
Suppose that the noise variables $w_i$ are independent with $\E(w_i|X) = 0$, $\E(w_i^2|X) = \sigma^2$ and $\E(|w_i|^{4+a}|X) \le B$ for some $a>4\beta/(1-2\beta)$. Let $G = [g_1|\dotsc|g_k] \in \reals^{p\times k}$ be the matrix constructed by solving optimization problem \eqref{OPT:M-n-n}. For $i\in [p]$, define
\begin{align}
Z_i = \frac{1}{\sqrt{n}} \frac{g_i^\sT X^\sT w}{ \sigma (g_i^\sT \hSigma g_i)^{1/2}}\,.
\end{align}
Suppose that the assumptions of Theorem~\ref{thm:typeI-random-aps} hold. 
Then,  for any sequence $i = i(n)\in [p]$, and any $x\in \reals$, we have
\[
\lim_{n\to\infty} \prob(Z_i\le x|X) = \Phi(x)\,,
\]
with $\Phi(x)$ indicating the cdf of standard normal variable.
\end{propo}

We refer to Appendix~\ref{app:lindberg} for the proof of Proposition~\ref{lindberg}.

}

\section{Discussion}\label{sec:Other}
It is useful to study the proposed methodology for some specific choices of $\Omega_0$ and discuss its optimality.

{\bf Example 1 (Predictions).} Fix an arbitrary $c \in \reals$ and consider the set $\Omega_0  = \{\theta: \xi^\sT \theta  = c\}$. This corresponds to the set where the (noiseless) unobserved response on the new feature vector $\xi$ is $c$. We can use our methodology to test $H_0: \theta_0\in \Omega_0$ versus its alternative. Further, by duality of hypothesis testing and confidence intervals, our methodology provides confidence intervals for a linear functional of the form $\xi^\sT \theta_0$.  

%

Computing $u$ from~\eqref{eq:u-opt} in this case gives $u = \xi/\|\xi\|$. Since $\xi$ is independent of $(y,X)$, the data splitting step in the procedure becomes superfluous. 
By duality, we construct   $(1-\alpha)$ confidence interval for $\xi^\sT \th_0$ by finding the range of values $c$ such that the rule fails to reject $H_0$ at level $\alpha$. This is formalized in the next lemma.
\begin{lemma}\label{lem:prediction}
Consider a sequence of design matrices $X\in \reals^{n\times p}$, with dimensions $n,p \to \infty$, $p = p(n)\to \infty$ satisfying the assumptions of Theorem~\ref{thm:typeI}.
For given  $\alpha\in (0,1)$, define $C(\alpha) = [c_{\min},c_{\max}]$ 
with 
\begin{align}\label{eq:CI1}
c_{\min} &= \| \xi\| \dgamma -\frac{\hsigma}{\sqrt{n}} \sqrt{g^\sT\hSigma g}\, z_{\alpha/2}\|\xi\|_2  \,,\\
c_{\max}&= \| \xi\| \dgamma + \frac{\hsigma}{\sqrt{n}} \sqrt{g^\sT\hSigma g}\,z_{\alpha/2}\|\xi\|_2\,, 
\end{align}
where $\dgamma$ is the debiased estimator given by~\eqref{dgamma-simple}  with $u = \xi/\|\xi\|$. Then,
\begin{align}
\underset{n\to \infty}{\liminf}\; \prob\left(\xi^\sT \th_0 \in C(\alpha) \right) \ge 1-\alpha\,.
\end{align}
\end{lemma}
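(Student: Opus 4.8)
The plan is to read this as an inversion of the level-$\alpha$ test of Theorem~\ref{thm:typeI}, specialized to $k=1$ with $u=\xi/\|\xi\|$, and to verify that the resulting acceptance region is exactly $C(\alpha)$. First I would note that $\xi$ is a fixed new feature vector independent of $(y,X)$, so $u=\xi/\|\xi\|$ is a valid one-dimensional basis independent of the samples, the data-splitting step is superfluous, and the hypotheses of Theorem~\ref{thm:typeI} and Lemma~\ref{propo:bias-size2} apply verbatim. The key structural simplification is that $\proj_u(\Omega_0)=\{c/\|\xi\|\}$ is a single point, so the constraint $\th\in\Omega_0$ forces $u^\sT\hthp = c/\|\xi\|$ in~\eqref{my-Tn}, and the test statistic collapses to $T_n = \Dn\,|\dgamma - c/\|\xi\||$ with $\Dn$ a scalar. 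Hence the test accepts $H_0:\xi^\sT\th_0=c$ iff $c$ lies in an explicit interval that does not depend on $c$ itself (it depends only on $\dgamma$, $g$, $\hSigma$, $\hsigma$, $\|\xi\|$), which is the duality backbone: $C(\alpha)$ is the set of hypothesized values the test fails to reject.

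The mechanism that pins down the interval endpoints is a one-dimensional central limit theorem. Applying Lemma~\ref{propo:bias-size2} with $k=1$ gives the decomposition~\eqref{dgamma-decomposition}, namely $\sqrt{n}(\dgamma - u^\sT\th_0) = Z + \Delta$ with $Z\sim\normal(0,\sigma^2\, g^\sT\hSigma g)$ exactly (conditionally on $X$, since $w$ is Gaussian and $g$ is a function of $X$ only), and with $\Delta = o_P(1)$ by the bound~\eqref{eq:delta-small} under the sparsity scaling $s_0 = o(1/(\mu\sqrt{\log p}))$ assumed in Theorem~\ref{thm:typeI}. Combining this with consistency of the scaled-Lasso noise estimate, $\hsigma\to_P\sigma$ (from~\cite{SZ-scaledLasso}), and a lower bound on the variance factor $g^\sT\hSigma g$, Slutsky's lemma yields
\[
\frac{\sqrt{n}\,(\dgamma - u^\sT\th_0)}{\hsigma\,(g^\sT\hSigma g)^{1/2}} \;\dist\; \normal(0,1)\,.
\]
Since $u^\sT\th_0 = \xi^\sT\th_0/\|\xi\|$, the event $\{|\,\text{studentized statistic}\,|\le z_{\alpha/2}\}$ is, after multiplying through by $\|\xi\|$ and rearranging, precisely $\{\xi^\sT\th_0\in[c_{\min},c_{\max}]\}=\{\xi^\sT\th_0\in C(\alpha)\}$. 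Taking probabilities and passing to the limit gives $\prob(\xi^\sT\th_0\in C(\alpha))\to 1-\alpha$, hence the claimed $\liminf\ge 1-\alpha$ (in fact with equality).

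The main obstacle is controlling the bias relative to the standard deviation, i.e.\ showing $\Delta/(g^\sT\hSigma g)^{1/2}=o_P(1)$: this needs both the sparsity condition to kill $\Delta$ through~\eqref{eq:delta-small} and a lower bound on the variance factor $g^\sT\hSigma g$ so that a vanishing bias remains negligible after studentization. Under the random-design setting of Theorem~\ref{thm:typeI-random} this is automatic, since $g^\sT\hSigma g$ concentrates around $u^\sT\Sigma^{-1}u\ge 1/C_{\max}>0$; under the deterministic-design hypotheses of Theorem~\ref{thm:typeI} it must be extracted from the compatibility/eigenvalue assumptions. A secondary, purely cosmetic point to address is that the stated endpoints $c_{\min},c_{\max}$ use $\sqrt{g^\sT\hSigma g}$ whereas the test's standardization $\Dn$ carries the extra $10^{-4}$ regularizer from $Q$; because $g^\sT\hSigma g$ is bounded below, this additive term is asymptotically negligible, so the interval in Lemma~\ref{lem:prediction} and the exact acceptance region coincide in the limit and the coverage conclusion is unaffected.
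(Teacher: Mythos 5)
Your proposal is correct, and its skeleton --- set $u=\xi/\|\xi\|$ via \eqref{eq:u-opt}, observe that the constraint $\th\in\Omega_0$ forces $u^\sT\hthp=c/\|\xi\|$ so the statistic collapses to the scalar $T_n=|d(\dgamma-c/\|\xi\|)|$, then invert $\{c: T_n< z_{\alpha/2}\}$ to read off the interval --- is exactly the paper's proof. Where you diverge is in how coverage is certified. The paper stops after the algebraic inversion and lets Theorem~\ref{thm:typeI} do all the distributional work through duality; you instead re-derive the $k=1$ limit law directly, combining the decomposition of Lemma~\ref{propo:bias-size2}, the bias bound \eqref{eq:delta-small}, consistency of $\hsigma$ (Lemma~\ref{ssLasso}), and Slutsky. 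Your longer route actually buys something concrete: the endpoints \eqref{eq:CI1} use $\sqrt{g^\sT\hSigma g}$, whereas the test's standardization $d$ carries the extra $+10^{-4}$ regularizer, so the stated $C(\alpha)$ is a strict subset of the exact acceptance region and duality alone does not literally transfer the coverage guarantee of Theorem~\ref{thm:typeI} to it (the paper glosses over this when it says ``plugging for $d$''). Your CLT argument, which studentizes by $\hsigma\sqrt{g^\sT\hSigma g}$ --- the true conditional standard deviation of $Z$, with no regularizer --- gives asymptotic coverage exactly $1-\alpha$ for the interval as stated, closing that small gap.

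One caveat on what you call the main obstacle. You are right that a lower bound on $g^\sT\hSigma g$ is the crux, but your claim that it is ``automatic'' under random designs overstates what is available: Lemma~\ref{random-deter} gives only the one-sided bound $\limsup_n\,[g^\sT\hSigma g-u^\sT\Sigma^{-1}u]\le 0$, i.e.\ an upper bound. A matching lower bound can genuinely fail for dense $\xi$: if $\|u\|_\infty\le\mu$ then $g=0$ is feasible for \eqref{OPT:M-n}, the minimizer attains $g^\sT\hSigma g=0$, and the interval degenerates to a point. The standard repair, $g^\sT\hSigma g\ge (u^\sT\hSigma g)^2/(u^\sT\hSigma u)\ge (1-\|u\|_1\mu)^2/(u^\sT \hSigma u)$, requires $\|u\|_1\mu\to 0$, i.e.\ essentially sparse or small-$\ell_1$ $\xi$ --- consistent with the fact that the paper's optimality comparison with \cite{cai2015confidence} is stated for sparse $\xi$. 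So your flag is warranted, but this is a condition on $\xi$, not something extractable from the compatibility assumptions alone; neither your sketch nor the paper's own proof fully discharges it.
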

We refer to Appendix~\ref{proof:lem-prediction} for the proof of Lemma~\ref{lem:prediction}. The constructed confidence interval has length of rate $\|\xi\|/\sqrt{n}$. In~\cite{cai2015confidence}, it is shown that the minimax expected length of confidence intervals for $\xi^\sT\theta_0$, with a sparse vector $\xi$ (i.e., $\|\xi\|_0 = O(s_0)$) is $\|\xi\|(1/\sqrt{n} + s_0(\log p) /n)$. Therefore, in the regime $s_0 = o(\sqrt{n}/\log p)$, which is the focus of the current paper, the constructed confidence intervals are minimax rate optimal. It is worth noting that the confidence interval defined 
in Lemma~\ref{lem:prediction} is similar to the one proposed by~\cite{cai2015confidence}.  For the case of non-sparse $\xi$, \cite{cai2015confidence} establishes the minimax rate $\|\xi\|_\infty s_0\sqrt{(\log p)/n}$ for the expected length of confidence interval for $\xi^\sT \theta_0$, and hence our construction~\eqref{eq:CI1} has an optimality gap in this case.
\bigskip

{\bf Example 2 (Quadratic forms).} As another example we apply our framework to testing squared-$\ell_2$ norm of $\theta_0$. Consider the set $\Omega_0(c)  = \{\theta: \|\theta\|_2^2  = c\}$, where $c\ge 0$ is a fixed arbitrary constant.
We use the proposed framework to test the null hypothesis $H_0: \theta_0\in \Omega_0(c)$. Computing $u$ from~\eqref{eq:u-opt} in this case gives $u = \hth^{(1)}/\|\hth^{(1)}\|$.
We next use the duality between hypothesis testing and confidence intervals to construct confidence intervals for $\|\theta_0\|_2^2$.
\rev{
\begin{lemma}\label{lem:quadratic-CI}
Consider a sequence of design matrices $X\in \reals^{n\times p}$, with dimensions $n,p \to \infty$, $p = p(n)\to \infty$ satisfying the assumptions of Theorem~\ref{thm:typeI-random}).
For given  $\alpha\in (0,1)$, define $C(\alpha) = [c_{\min},c_{\max}]$ with
\begin{align}
c_{\min} &= \left( 2\dgamma \|\hth^{(1)}\| - \|\hth^{(1)}\|^2 - L \right)_+\,, \quad 
c_{\max}= \left( 2\dgamma \|\hth^{(1)}\| - \|\hth^{(1)}\|^2 + L\right)\,,\label{eq:CI2}\\
 L&= \|\hth^{(1)}\|  \sqrt{g^\sT\hSigma g}\,(1+o(1))  \,  \frac{\hsigma z_{\alpha/2}}{\sqrt{n}}   \,, \label{L}
\end{align}
where $a_+ = \max(a,0)$ and $\dgamma$ is the debiased estimator given by~\eqref{dgamma-simple} with $u = \hth^{(1)}/ \|\hth^{(1)}\|$. Then,
\begin{align}
\underset{n\to \infty}{\liminf}\; \prob\left(\|\th_0\|_2^2 \in C(\alpha) \right) \ge 1-\alpha\,.
\end{align}
\end{lemma}}
We give the proof of Lemma~\ref{lem:quadratic-CI} in Appendix~\ref{proof:lem-quadratic-CI}.
\bigskip

{\bf Example 3 (Testing $\theta_{\min}$ condition).}
For a given $c>0$, define the set $\Omega_{0} = \{\th\in \reals^p:\; \min_{j\in \supp(\th)} |\th_j| \ge c\} $.  Apart from the importance of this example as discussed in the introduction, it differs from previous example in that
the set $\Omega_{0}$ is non-convex and disconnected. Recall that the guideline~\eqref{eq:u-opt} was provided for convex sets $\Omega_0$, which is not true in this example. 

Before proposing a choice of $U$ for this example, we state a lemma.
\begin{lemma}\label{proj-betamin}
Let $v\in \reals^p$ and define $\th\in \reals^p$ with $\th_i = \cS(v_i,c)$, where 
\begin{align}
\cS(x,c) = \begin{cases}
x & |x|\ge c\,,\\
c & x\in (c/2,c)\\
0 & x\in [-c/2,c/2]\\
-c& x\in (-c,-c/2)
\end{cases} 
\end{align}

Then $\th$ is a solution to $\min_{\th\in \reals^p} \|D(v -\th)\|_\infty$, subject to $\th\in \Omega_0$, for any diagonal matrix $D$.  
\end{lemma}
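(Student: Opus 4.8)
The plan is to exploit the fact that both the constraint set $\Omega_0$ and the objective $\|D(v-\cdot)\|_\infty$ separate across coordinates, reducing the problem to $p$ independent scalar projections. First I would observe that $\Omega_0$ is a product set: by definition $\th\in\Omega_0$ iff every nonzero coordinate obeys $|\th_i|\ge c$, which is equivalent to requiring $\th_i\in A$ for each $i$, where $A\equiv\{0\}\cup\{x\in\reals:\,|x|\ge c\}$. Thus $\Omega_0 = A\times\cdots\times A$, and feasibility can be checked one coordinate at a time.

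Next, writing $D=\diag(d_1,\dots,d_p)$, the objective is a weighted maximum of per-coordinate errors, $\|D(v-\th)\|_\infty=\max_{1\le i\le p}|d_i|\,|v_i-\th_i|$. The crux is then the scalar claim that $\cS(v_i,c)\in\arg\min_{t\in A}|v_i-t|$, that is, that $\cS(\cdot,c)$ realizes the Euclidean projection of a real number onto $A$. I would verify this by a short case analysis matching the branches of $\cS$: if $|v_i|\ge c$ then $v_i\in A$ and the distance $0$ is attained at $t=v_i=\cS(v_i,c)$; if $0\le v_i<c$ the only relevant nearest points of $A$ are $0$ and $c$ (since $-c$ is strictly farther), at distances $v_i$ and $c-v_i$, so $t=0$ wins for $v_i\le c/2$ and $t=c$ wins for $v_i>c/2$, exactly the branches $\cS(v_i,c)=0$ and $\cS(v_i,c)=c$ (the tie at $v_i=c/2$ being broken to $0$). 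The case $v_i<0$ follows by the symmetry $\cS(-x,c)=-\cS(x,c)$.

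Finally I would assemble the scalar optima into a global one. Let $\th^\star$ be the coordinatewise image of $v$ under $\cS$, so $\th^\star_i=\cS(v_i,c)$; it is feasible since each $\th^\star_i\in A$. For any competing $\th\in\Omega_0$ we have $\th_i\in A$, so the scalar claim gives $|v_i-\th^\star_i|\le|v_i-\th_i|$ for every $i$; multiplying by $|d_i|\ge 0$ and taking the maximum over $i$ yields $\|D(v-\th^\star)\|_\infty\le\|D(v-\th)\|_\infty$. Hence $\th^\star$ is a minimizer, which is the assertion of the lemma.

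The only subtlety, and the nearest thing to an obstacle, is that the objective is an $\ell_\infty$ (max) norm rather than a separable sum: one must note that minimizing each coordinate error separately still minimizes the maximum, which works here precisely because the per-coordinate minimizer $\cS(v_i,c)$ does not depend on the weight $d_i$. This also explains why the conclusion holds for \emph{any} diagonal $D$, including entries that vanish or are negative, since only $|d_i|$ enters the final bound.
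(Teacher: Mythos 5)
Your proof is correct. The paper actually omits the proof of this lemma entirely (declaring it ``straightforward''), and your argument --- recognizing $\Omega_0$ as the product set $A^p$ with $A=\{0\}\cup\{x:|x|\ge c\}$, verifying by cases that $\cS(\cdot,c)$ is the scalar nearest-point map onto $A$, and then noting that coordinatewise dominance $|v_i-\th^\star_i|\le|v_i-\th_i|$ passes through the weighted maximum for any diagonal $D$ --- is precisely the straightforward argument the authors had in mind, with the tie at $|v_i|=c/2$ and the sign/zero weights $d_i$ handled carefully.
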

Proof of Lemma~\ref{proj-betamin} is straightforward and is omitted.

In the numerical experiments, we apply our framework for this example with $k = 1$ and $U  = u\in \reals^{p}$ given by:
\begin{align}\label{eq:u-beta-min}
u = e_{i^\star}\,,\quad i^\star \equiv \arg\max_{i\in [p]}\, \Big|\hth^{(1)}_i - \cS(\hth^{(1)}_i,c))\Big|\,.
\end{align}

 We refer to Appendix~\ref{app:justification} for a justification for this choice. 
%
By using Lemma~\ref{proj-betamin}, the test statistic in this case amounts to $T_n = |d(\dgamma-\cS(\dgamma,c))|$ (See step 5 of the algorithm presented in Section~\ref{sec:choice-U}).

\rev{
\subsection{Prior art}\label{sec:prior-art}
The inference problem~\eqref{eq:H0-HA} studied in this paper is very general and encompasses several important problems such as the examples discussed in Section~\ref{sec:motivation}.
For specific choices of set $\Omega_0$, one may use the structure of the set $\Omega_0$ to come up with methods with higher statistical power. However, in the sequel we argue that for three classes of inferential problems, our proposed framework either recovers the previously proposed methods for that specific problem, or have comparable performance. We also contrast the underlying assumptions of our framework and those of other methods designed for these specialized problems. 
\medskip

{\bf 1. Inference on prediction:}
As discussed in Section~\ref{sec:Other}, for inference on linear functions $\gamma_0 =\xi^\sT \theta_0$ (predictions), our framework proposes $u = \xi/\|\xi\|$ and construct a debiased estimator of $\gamma_0$ taking the following form
\begin{align}\label{eq:debias-linear}
\dgamma = \frac{\xi^\sT}{\|\xi\|} \htheta + \frac{1}{n} g^\sT X^\sT (y - X\htheta)\,,
\end{align}
with $g$ is obtained by solving optimization~\eqref{OPT:M-n}. As argued for the case of random designs with population covariance $\Sigma$, this implies $g \approx \Sigma^{-1} \xi/\|\xi\|$. 
As also discussed earlier in the introduction and previous section, a similar approach has been used by~\cite{cai2015confidence} and they prove that the resulting confidence interval would be minimax rate optimal. It is indeed an appealing property of our method that, despite its generality,  it recovers the method of~\cite{cai2015confidence} for this specific case and enjoys minimax optimality. 

\smallskip

$\bullet$ {\bf Assumptions:} In terms of assumptions,~\cite{cai2015confidence} focuses on high-dimensional linear models with gaussian designs (rows of design matrix are drawn i.i.d from a multivariate normal distribution),
sparse parameter vector and gaussian measurement noise. Our analysis in Section~\ref{sec:Results} considers sub-gaussian random designs (Theorem~\ref{thm:typeI-random}) and coherent fixed design (Theorem~\ref{thm:typeI}). We also extended our analysis to \emph{approximately} sparse models (Section \ref{sec:app-sparsity}) and non-gaussian noise (Section~\ref{sec:non-gaussian}).

\smallskip

$\bullet$ {\bf Least-favorable one-dimensional sub-model:} 
It is worth noting that the form of debiasing~\eqref{eq:debias-linear} for linear functionals of $\theta$ can also be derived from the perspective of least-favorable scores discussed in an earlier work~\cite{zhang2014confidence}. Akin to the semi-parametric models, consider the one-dimensional sub-model $\{\theta_0 + u\phi,\, |\phi| < \eps_*\}$ with $\eps_*\to 0$, $\phi$ scalar and $u\in \reals^p$. By imposing the constraint $\xi^\sT u =1$, we have $\xi^\sT(\theta_0+u\phi) - \xi^\sT \theta_0 = \phi$. The idea of~\cite{zhang2014confidence} is to look for the least favorable submodels at $\theta_0$, given by $\theta_0 + u\phi$ with $u_0$ the direction that minimizes Fishers information. For the log-likelihood $\ell_i(\theta_0) = \ell(\theta_0|y_i, x_i)$, recall that the Fisher information operator at $\theta$ is defined as $F = -\E(\ddot{\ell_i}(\theta))$ and  for linear regression with gaussian errors, we have $F = \tfrac1\sigma^2 \E(x_ix_i^\sT) = \tfrac1\sigma^2 \Sigma$. The least-favorable direction in the sub-model is then given by
\[
u_0 = \arg\min_u \{u^\sT \Sigma u:\, \xi^\sT u = 1\} = \Sigma^{-1}\xi/(\xi^\sT\Sigma^{-1}\xi)\,.
\]
Following~\cite{zhang2014confidence}, one can construct a low-dimensional projection estimator (LDPE) as a one-step maximum likelihood correction of $\htheta$ in the direction of the least favorable sub-model $u$ as follows
\begin{align}
\dgamma &= {\xi^\sT} \htheta + \arg\max_{\phi\in\reals} \sum_{i=1}^n \ell_i(\htheta+u\phi)\nonumber\\
 &=  {\xi^\sT} \htheta + \frac{u^\sT X^\sT (y- X\htheta)}{\|Xu\|^2} = {\xi^\sT} \htheta + \frac{\xi^\sT\Sigma^{-1}\xi}{\|X\Sigma^{-1}\xi\|^2}\, \xi^\sT\Sigma^{-1} X^\sT (y- X\htheta)\nonumber\\
 &\approx {\xi^\sT} \htheta + \frac{1}{n} \xi^\sT\Sigma^{-1} X^\sT (y- X\htheta)\,,\label{dgamma-score}
\end{align}      
where in the last step we replaced the denominator by its expectation. Comparing \eqref{dgamma-score} with \eqref{eq:debias-linear} we see that (up to a normalization by $\|\xi\|$) they are the same if $g = \Sigma^{-1}\xi$. However, $\Sigma$ is unknown in general and optimization~\eqref{OPT:M-n} try to find $g\approx \Sigma^{-1}\xi$ that also minimizes the variance of the obtained debiased estimator.

\smallskip

$\bullet$ {\bf Choice of $k$ and effect of sample splitting:} 
Our procedure uses sample splitting to find the best subspace $U$ for the sake of statistical power. On one side, the sample splitting incurs loss in power as we are using only half of data points. On the other side, the purpose of sample splitting was to choose $U$ so as to increase the power. To understand this trade-off we consider the following inference problem. 
Consider a function $h:\reals^p\mapsto \reals^q$ defined as $h(\theta) = (\xi_1^\sT \theta, \dotsc, \xi_q^\sT \theta)$, for a linearly independent set $\{\xi_1,\dotsc, \xi_q\}$. The goal is to do inference on the value of $h(\theta_0)$. 
We consider the following two methods of choosing $U$ in constructing the debiased estimator:

\begin{enumerate}
\item \emph{Method 1:} We let $k = q$ and $U$ be a basis for the space spanned by $\{\xi_1,\dotsc, \xi_q\}$. This method does not require any sample splitting.
\item \emph{Method 2:} Define $\Omega_0 = \{\theta: h(\theta) = c\}$, for a given $c>0$. Since $\Omega_0(c)$ is convex, our methodology sets $k=1$ and chooses $u$ as in~\eqref{eq:u-opt}. Here we require sample splitting for $q\ge2$. (cf. Section \ref{sec:convex})
\end{enumerate}
Note that the two methods become identical for $q = 1$.
We next compare (the analytical lower bound on) the statistical power of these two methods for choosing $U$. Let $\eta_u = \de(\htheta, \Omega_0; u)$ and $\eta_u = \de(\htheta, \Omega_0; U)$, with $u$ given by \eqref{eq:u-opt}
and $U$ a basis for the space $\{\xi_1, \dotsc, \xi_q\}$. Using Theorem~\ref{thm:power} and Equation~\eqref{power-LB1}, the lower bound for the power of method 1 and method 2 are respectively given by  $F(\alpha,\tfrac{1}{\hsigma}\sqrt{nC_{\min}} \eta_U, q)$ and $F(\alpha, \frac{1}{\sqrt{2}\hsigma}\sqrt{nC_{\min}} \eta_u, 1)$. 
Furthermore, by Equation~\eqref{eq:dum4} we have $\eta_u \ge \eta_U$ and since $F(\alpha, x,k)$ is increasing in $x$, we get $F(\alpha,  \frac{1}{\sqrt{2}\hsigma}\sqrt{nC_{\min}} \eta_u, 1) \ge F(\alpha,  \frac{1}{\sqrt{2}\hsigma}\sqrt{nC_{\min}} \eta_U, 1)$. In summary, we have
\begin{align}
\lim\inf_{n\to\infty} \frac{{\sf power}_1(n)}{ F\Big(\alpha,\tfrac{1}{\hsigma}\sqrt{nC_{\min}} \eta_U, q\Big)} \ge 1\,, \quad \quad  \lim\inf_{n\to\infty} \frac{{\sf power}_2(n)}{ F\Big(\alpha,  \frac{1}{\sqrt{2}\hsigma}\sqrt{nC_{\min}} \eta_U, 1\Big)} \ge 1\,.
\end{align}
The above lower bounds nicely capture the tradeoff between the choice of $k$ and the sample splitting. The function $F(\alpha,x,k)$ is decreasing in $k$ which supports the use of $k=1$, but the function is increasing in the $x$ and hence decreases under sample splitting. To understand this tradeoff we basically need to compare $F(\alpha, x, 1)$ and $F(\alpha, \sqrt{2}x, q)$, with $x = \frac{1}{\sqrt{2}\hsigma}\sqrt{nC_{\min}} \eta_U$.
In Figure~\ref{fig:F}, we have plotted these curves for $\alpha = 0.05$ and several values of $q$. As we see for small values of signal strength $x$, method 2 ($k=1$ and sample splitting) outperforms, while for larger signal strength $x$, method 1 ($k>1$ and no sample splitting) prevails.

\begin{figure}[]
    \centering{
        \includegraphics[width = 7cm]{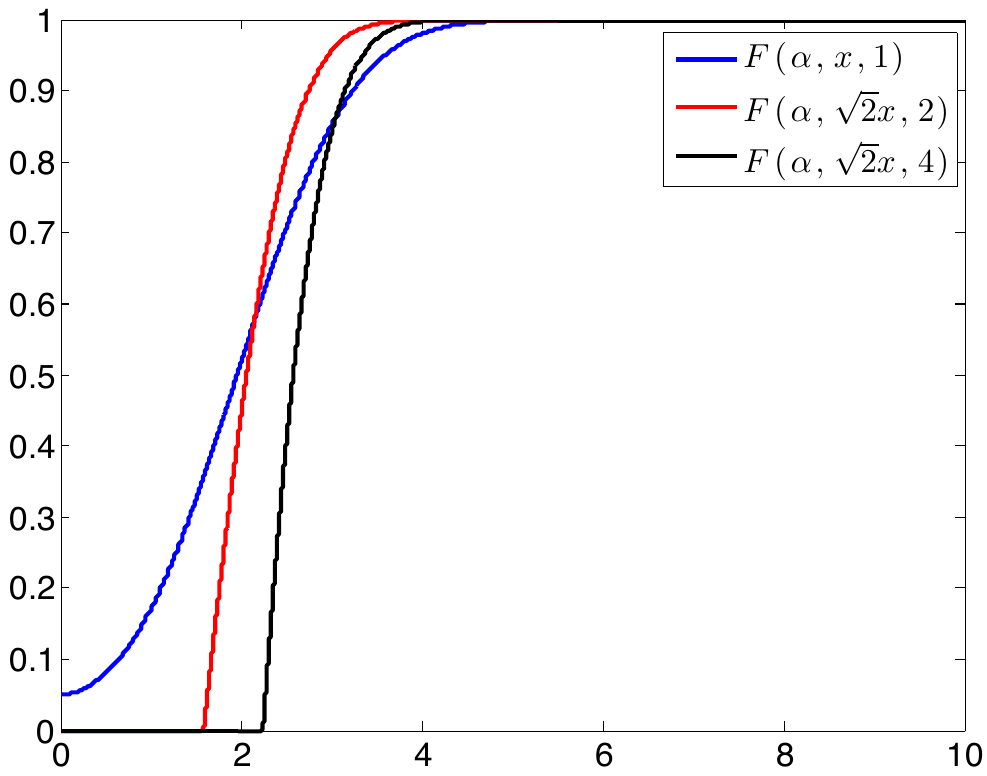}
        \put(-95,-7){$x$}
       \put(-100,-15){\phantom{AA}}
        \caption{Plot of $F(\alpha,x,1)$ and $F(\alpha,\sqrt{2}x, q)$ for $q = 2, 4$ and $\alpha = 0.05$}\label{fig:F}
        }
\end{figure}

\medskip

{\bf 2. Inference on quadratic forms of parameters:}
The work~\cite{janson2016eigenprism} proposed \emph{EigenPrism}, a procedure to construct two-sided confidence interval for the signal squared magnitude $\|\theta_0\|^2$. An appealing property of this procedure is that, albeit its applicability to the high-dimensional setting ($p>n$), it does not make any assumption on the coefficient sparsity. However, it is theoretically justified only for standard gaussian designs where $X_{ij}\sim \normal(0,1)$, independently. As explained in \cite{janson2016eigenprism}, this assumption is crucial because it ensures that in the SVD of $X = UD V^\sT$, the columns of $V$ are uniformly distributed on the unit sphere, and hence allows for computing the expectation and variance of inner products of columns of $V$ with $\theta_0$, which constitutes a main building component of EigenPrism. By contrast, our procedure (when specialized to inference on quadratic forms of parameters as discussed in Section~\ref{sec:Other}, Example 2) applies to a much broader family of sub-gaussian random designs, but assumes the coefficient sparsity  $s_0 = o(\sqrt{n}/\log p)$. 

In the limit $n, p\to \infty$ and $n/p \to \gamma\in (0,1)$, the length of confidence intervals constructed by EigenPrism for $\|\theta_0\|^2$ works out at $C_\gamma  (\|\theta_0\|^2+ \sigma^2) \frac{z_{\alpha/2}}{\sqrt{n}}$, with $C_\gamma$ a numerical constant defined based on Marcenko-Pastur distribution with parameter $\gamma$. By comparison, using Lemma~\ref{lem:quadratic-CI}, the confidence interval obtained by our method is of length $2L < \frac{2z_{\alpha/2}}{\sqrt{C_{\min}}} \|\htheta^{(1)}\| \tfrac{\sigma}{\sqrt{n}}$.  As we see the length of confidence intervals for $\|\theta_0\|^2$ from both methods scale at rate $1/\sqrt{n}$.
 
 \medskip
 
 {\bf 3. Inference on individual parameters:} As discussed in Section~\ref{sec:motivation}, for the special case of inference on an individual model parameter, our approach recovers the debiasing method of~\cite{javanmard2014confidence}. Similar debiasing approach (with different construction of the the decorrelating matrix, using node-wise regression) was proposed in~\cite{zhang2014confidence,van2014asymptotically} and its validity is proved under the assumption that the precision matrix $\Sigma^{-1}$ is sparse. 
The work~\cite{belloni2014inference} has proposed a significantly different approach for doing inference on an an individual parameter, called ``post-double selection". Suppose that we are interested in parameter $\theta_i$. This method consists of two selection steps: 1) Let $I_1$ be the covariates selected by Lasso in regressing columns $i$ of the design matrix on the other columns; 2) Let $I_2$  denote the covariates selected by Lasso in regressing $y$ on the design $X$. The estimation of parameter $\theta_i$ is then defined as the least squares estimator obtained by regression $y$ on $x_i$ and the selected features $I_1\cup I_2$ (we may expand this set to also include other features that the statistician thinks are relevant). It is shown that the post-double estimator obeys an asymptotically normal distribution.

The limiting distribution of the post-double estimator is characterized under approximate sparsity structure and also applies to non-gaussian noise as well, as far as some moment conditions (similar to Assumption~\ref{ass:moment}) hold. Let us stress that the approximate sparsity assumption in~\cite{belloni2014inference} is much weaker than ours in that it allows for $\|r\| = O_P(\sqrt{s_0})$, while we require $\|r\| = o_P(1)$. In addition, the analysis of the post-double estimator  extends to possibly heteroscedastic noise distributions. 
}

\section{Numerical illustration}\label{sec:numerical}

In this section, we examine the performance of our inference framework in terms of coverage rate and length of confidence intervals, type I error and statistical power under different setups.
We consider linear model~\eqref{eq:NoisyModel} where the design matrix $X\in \reals^{n\times p}$ has i.i.d rows generated from $\normal(0,\Sigma)$, with $\Sigma\in \reals^{p\times p}$ being the toeplitz matrix 
$\Sigma_{i,j} = \rho^{|i-j|}$.  For coefficient parameter $\th_0$, we consider a uniformly random support  (set of nonzero parameters) $S\subseteq [p]$, with $|S| = s_0$. 
The measurement errors are $w_i \sim \normal(0,1)$.

\subsection{Testing $\theta_{\min}$ condition}
We consider the set $\Omega_0 = \{\th: \min_{j\in \supp(\th_0)} |\th_{0,j}| \ge c\}$ and the null hypothesis $H_0: \, \theta_0\in \Omega_0$. As explained in Section~\ref{sec:Other} (Example 3), the set $\Omega_0$ is non-convex (indeed disconnected) and \rev{we consider one-dimensional projection of the problems along the direction $u$ given by~\eqref{eq:u-beta-min} for this example}. For the scaled Lasso estimator $\Lth$, given by~\eqref{scaledLasso}, we set the regularization parameter $\lambda = \sqrt{2.05 (\log  p)/n}$.
Further, the parameter $\mu$ in constructing the debiased estimator (see optimization problem~\eqref{OPT:M-n}) is set to  $\mu = 2\sqrt{(\log p)/n}$. We set $p = 1000$, $n = 600$, $s_0 =10$.
 \rev{The nonzero parameters $\theta_{0,i}$, $i\in S$, are chosen as $0.1, 0.2, \dotsc, 1$.} We set $\alpha = 0.05$ and vary the values of $c$ and $\rho$. The rejection probabilities are computed based on 300 random samples for each value of pair $(c,\rho)$. 
\rev{When $c\le 0.1$, $H_0$ holds and thus the rejection probability corresponds to the type I error. When $c>0.1$, the rejection probability corresponds to the power of the test. The results are reported in Table~\ref{tbl:beta-min}. As we see in Table~\ref{tbl:beta-min}(a), type I error is controlled below the desired level $\alpha = 0.05$. Also, as evident in Table~\ref{tbl:beta-min}(b), the power of our test increases at a very fast rate as $c$ increases.}

\begin{table}[h]%
\rev{
  \centering
  \subtable[Type I error ($\%$)]{
  \begin{tabular}{l|cccc}
    \hline
    $c\backslash \rho$ & $0.2$ & $0.4$ & $0.6$ & $0.8$ \\
    \hline
    $0.02$ &0.00 &0.004&1.33& 2.33 \\
    $0.04$ &0.33 &1.66& 2.33& 3.00 \\
    $0.06$ &1.66 &2.00&3.00& 3.66 \\
    $0.08$ &3.33 &4.33 &3.66& 4.66 \\
    $0.1$ & 3.00& 4.00 & 4.66 &4.33 \\
    
    \hline
  \end{tabular}
}
  \qquad\qquad
  \subtable[Statistical power ($\%$)]{
  \begin{tabular}{l|cccc}
    \hline
    $c\backslash \rho$ & $0.2$ & $0.4$ & $0.6$ & $0.8$ \\
    \hline
    $0.2$ &8.00 &10.66 &18.66 &14.33 \\
    $0.3$ &17.33 &24.66&28.66 & 35.33 \\
    $0.4$ &86.00 & 93.33 &92.66 &84.66 \\
    $0.5$ &90.00 &88.00&97.33&86.66 \\
     $0.6$ &100.00 &88.33&100.00&100.00 \\
    \hline
  \end{tabular}
}
  \caption{\rev{Type I error and statistical power for $H_0: \, \min_{j\in \supp(\th_0)} |\th_{0,j}| \ge c$, for significance level $\alpha = 0.05$.}}%
  \label{tbl:beta-min}%
  }
\end{table}

\subsection{Confidence intervals for linear functions} \label{CI-linear} We use our methodology to construct $95\%$ confidence intervals for functions of the form $\xi^\sT \th_0$. We set $p = 3000$, $s_0 = 30$
and choose the correlation parameter $\rho = 0.5$. \rev{The model parameters are set as follows. We set $\theta_{0,j} = 0.5$ for $j=1, \dotsc, s_0$, and $\theta_{0,j} = 0.5/(j-s_0+1)$, for $j=s_0+1, \dotsc, p$.} 

We construct confidence intervals according to Lemma~\eqref{lem:prediction}. We choose fives vectors $\xi_1, \xi_2,\dotsc, \xi_5$ as eigenvectors of $\Sigma$ with well-separated eigenvalues. Specifically, sorting the eigenvalues of $\Sigma$ as $\sigma_1 \ge \sigma_2 \ge \dotsc\ge \sigma_{3000}$, we choose the eigenvectors corresponding to $\sigma_1$, $\sigma_{750}$, $\sigma_{1500}$, $\sigma_{2250}$, $\sigma_{3000}$.
For each $\xi_i$, we vary $n$ in $\{1000, 1200, 1400, \dotsc, 2600\}$. For each configuration $(\xi_i,n)$, we consider $300$ independent realizations of measurement noise and on each realization, we construct 
$95\%$ confidence interval for $\xi_i^\sT \th_0$ based on Lemma~\eqref{lem:prediction}.

In Figure~\ref{fig:coverage}, we plot the average coverage probability of constructed confidence intervals for each configuration. Each curve corresponds to one of the vectors $\xi_i$. As we see, the coverage probability
for all of them and across different values of $n$ is close to the nominal value.

 In Figure~\ref{fig:CIL}, we plot the average length of confidence intervals as we vary the sample size $n$ in the log-log scale.
 As evident from the figure, the length of confidence intervals scales as $1/\sqrt{n}$.  

 %
\begin{figure}[]
    \centering
    \subfigure[\rev{Coverage of confidence intervals}]{
        \includegraphics[width = 6.5cm]{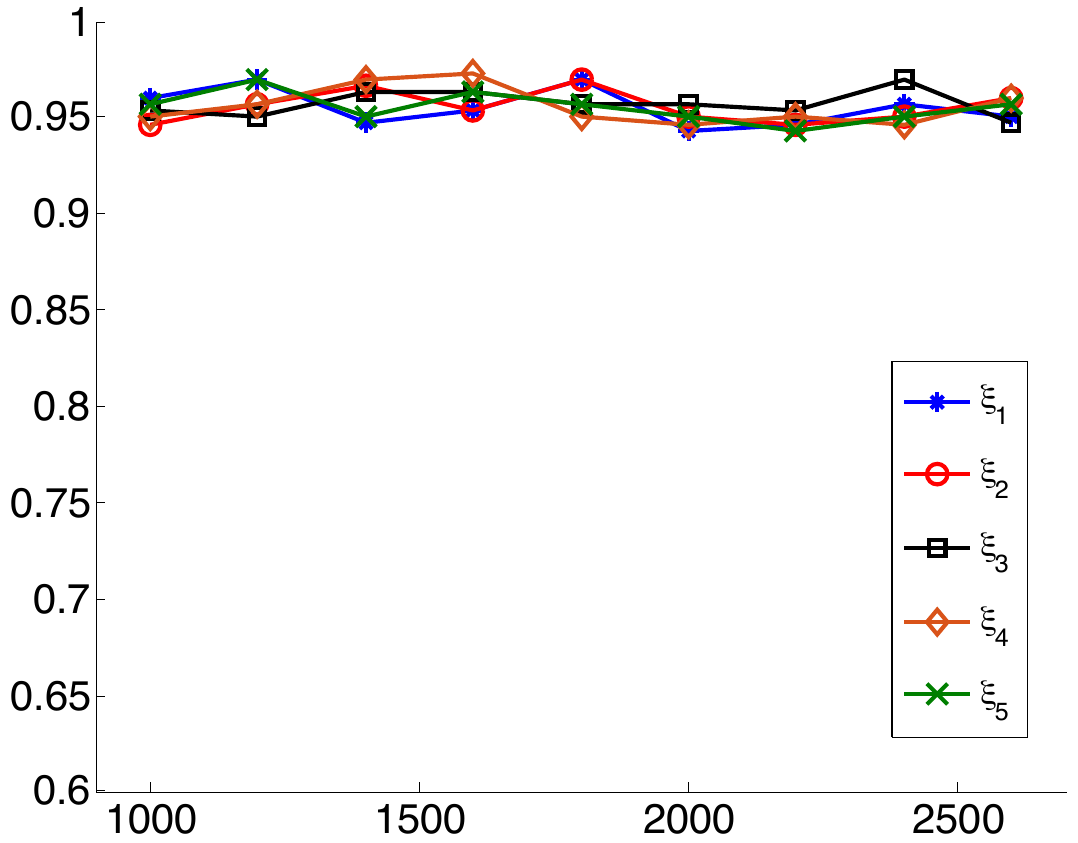}
        \put(-200,45){\rotatebox{90}{{\scriptsize $95\%$ CI Coverage}}}
        \put(-95,-7){{\scriptsize $n$}}
        \put(-100,-15){\phantom{AA}}
        \label{fig:coverage}
        }
        \hspace{1cm}
    \subfigure[\rev{Confidence interval widths}]{
        \includegraphics[width=6.5cm]{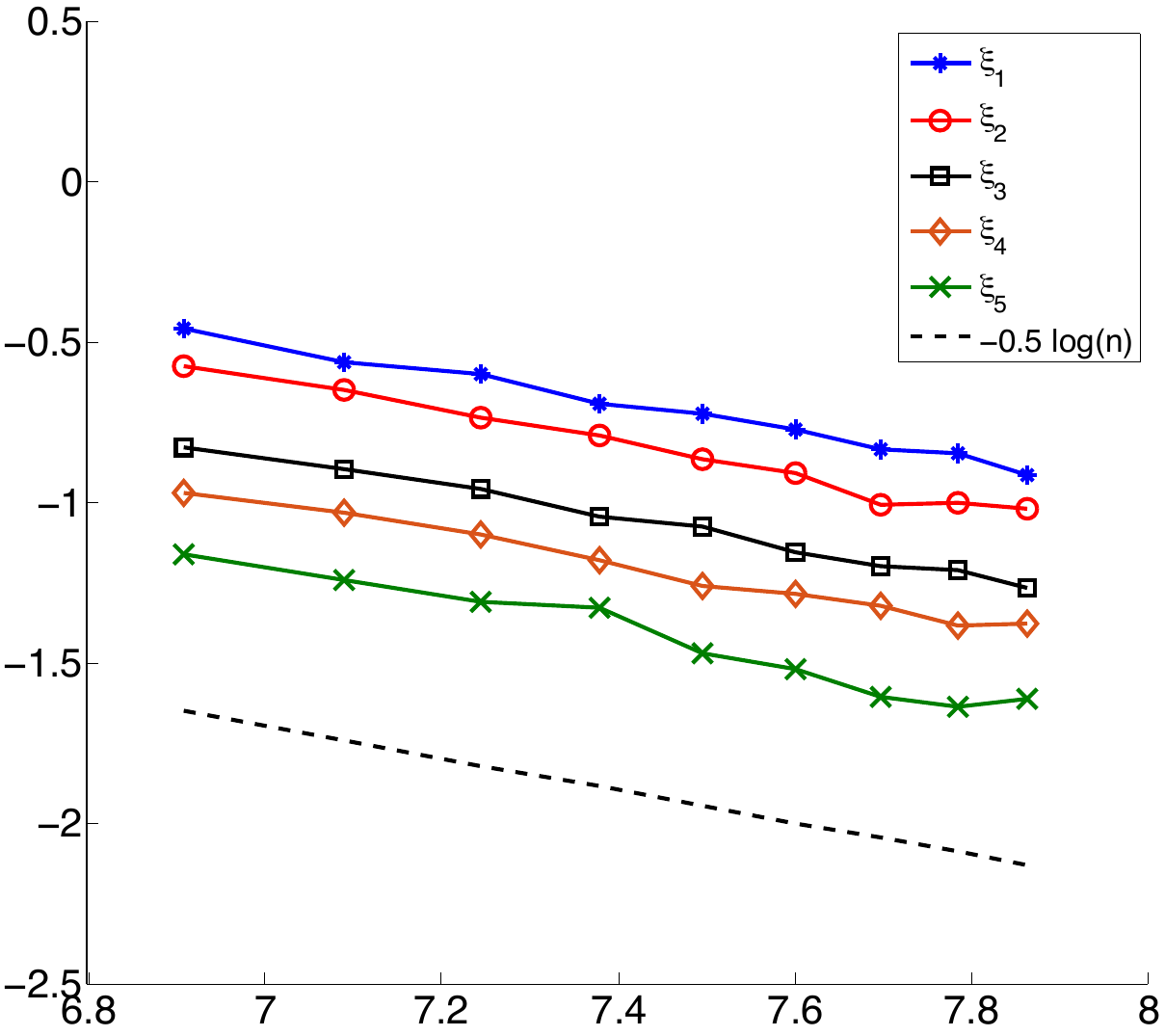}
         \put(-195,55){\rotatebox{90}{{\scriptsize log(CI width)}}}
         \put(-90,-7){{\scriptsize $\log(n)$}}
        \put(-100,-15){\phantom{AA}}
        \label{fig:CIL}
        }
    \caption{\rev{(a) Coverage of $95\%$ confidence intervals~\eqref{eq:CI1} for linear functions $\<\xi,\th_0\>$ versus sample size $n$. (b) Confidence interval widths versus sample size $n$. Here $p = 3000$, $s_0 = 30$, $\rho = 0.5$, and the model parameters are approximately sparse as described in Section~\ref{CI-linear}.}}\label{fig:CI1}
\end{figure} 
%

 %
%
\subsection{Testing for the non-negative cone}
\begin{table}[]
\rev{
  \centering
  \subtable[Type I error ($\%$)]{
  \begin{tabular}{l|cccc}
    \hline
    $b\backslash \rho$ & $0.2$ & $0.4$ & $0.6$ & $0.8$ \\
    \hline
    
    $1$ &2.00 &2.00&2.00&3.33 \\
    $0.8$ &0.66 &2.33&2.33&2.66 \\
    $0.6$ &3.00 &3.66&1.00&2.66 \\
    $0.4$ &2.66 &2.33&1.33&2.00 \\
    $0.2$ & 2.33&1.66&2.33&3.66 \\

    \hline
  \end{tabular}
}
  \qquad\qquad
  \subtable[Statistical power ($\%$)]{
  \begin{tabular}{l|cccc}
    \hline
    $b\backslash \rho$ & $0.2$ & $0.4$ & $0.6$ & $0.8$ \\
    \hline
    $-0.2$ &35.33 &68.00&78.00&80.00 \\
    $-0.4$ &99.33 & 100.00& 100.00&100.00  \\
    $-0.6$ &100.00 &100.00&100.00&100.00 \\
    $-0.8$&100.00 & 100.00 &100.00 & 100.00\\
    $-1$ &100.00&100.00&100.00&100.00\\
    \hline
  \end{tabular}
}
  \caption{\rev{Testing in the non-negative cone, $(n,s_0,p) =(600,10,1000)$. The non-zero entries have magnitude  $b$, and the covariance $\Sigma_{ij} =\rho^{|i-j|}$. } }%
  \label{tbl:nonneg-cone}%
  }
\end{table}
Define $\Omega_0  =\{\theta: \theta_i \ge 0 \text{ for all $i$}\}$ as the non-negative cone. In this section, we test whether $\theta_0 \in \Omega_0$ versus  $\theta_0 \notin \Omega_0$. The null model is generated as follows. \rev{ The nonzero entries in support $S$ are chosen as $b, b/2, b/3, \dotsc, b/s_0$, where $s_0 = |S|$ and $b>0$. The entries outside $S$ are set to zero.
The alternative model is generated similar where $b$ is replaced by $-b$.} 
As in the previous sections, the design matrix $X\in \reals^{n\times p}$ has i.i.d rows generated from $\normal(0,\Sigma)$, with $\Sigma\in \reals^{p\times p}$ being the toeplitz matrix 
$\Sigma_{i,j} = \rho^{|i-j|}$, and measurement errors $w_i \sim \normal(0,1)$, with parameters $(n,s_0,p) = (600,10,1000)$. We set $\alpha = 0.05$ and vary the values of $b$ and $\rho$. The rejection probabilities are computed based on 300 random samples for each value of pair $(b,\rho)$.

The simulation report in Table \ref{tbl:nonneg-cone} shows that the type I error is controlled below the target level $\alpha = 0.05$. Per statistical power, the method achieves power at least \rev{ $99\%$ for $|b| \ge 0.4$. 
Note that we have  a very difficult alternative in the sense that only a small fraction of the coordinates $({s_0}/{d})$ is negative with small magnitudes ranging in $[b/10,b]$}, so it is a very mild violation of the null, yet our algorithm still has high power.

\subsection{Real data experiment}
We measure the performance of our testing procedure on a riboflavin data set, which is publicly available by~\cite{BuhlmannBio} and can be downloaded via the `${{\sf hdi}}$' R-package. 
The data set includes $p = 4088$ predictors corresponding to the genes and $n= 71$ samples. The response variable indicates the logarithm of the riboflavin production rate and the covariates are the logarithm of the expression levels of the genes. We model the riboflavin production rate by a linear model. We first fit the Lasso solution $\hth$ using the {{\sf glmnet}} package~\cite{glmnet} and then generate $N= 100$  instances of the problem as $y^{(i)} = X\hth + w^{(i)}$, where $w^{(i)}\sim \normal(0, \sigma^2 \id_n)$. In other words, we treat $\hth$ as the true parameter $\theta_0$ and generate new data by resampling the noise.

We run two sets of experiments on this data.
\smallskip

\noindent{\bf CI for predictions.} We fix a vector $\xi\in \reals^p$ that is generated as $\xi_i \sim \normal(0,1/\sqrt{p})$, independently for $i\in [p]$. On each problem instance $(i)$, we construct confidence interval ${{\rm CI}}^{(i)}$ for $\xi^T\theta_0$, using Lemma~\ref{lem:prediction}.
We compute the coverage rate as 
\begin{align}\label{eq:cov}
{{\sf Cov}} = \frac{1}{N} \sum_{i=1}^N \ind(\xi^T\theta_0\in {{\rm CI}}^{(i)})\,.
\end{align}

\noindent{\bf CI for squared norm.} On each problem instance $(i)$, we construct confidence interval for $\|\theta_0\|_2^2$, using Lemma~\ref{lem:quadratic-CI} and compute the coverage rate given by~\eqref{eq:cov}.

The results are reported in Table~\ref{tbl:real}. As we see for various values of noise standard deviation $\sigma$, the coverage rates of the constructed intervals remain close to the nominal value.
In Figure~\ref{fig:CI-real}, we depict the constructed confidence intervals for $40$ random problem instances, in each experiment.

\begin{figure}[]
    \centering
    \subfigure[Confidence intervals for $\xi^T\theta_0$]{
        \includegraphics[width = 6.5cm]{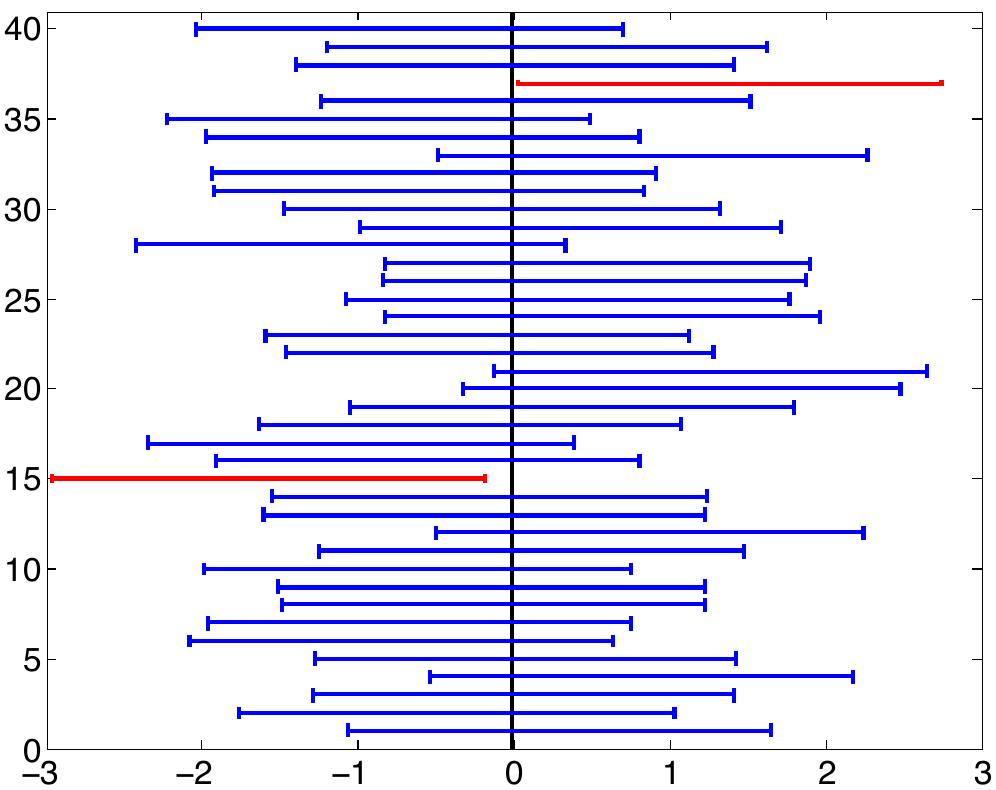}
        }
        \hspace{1cm}
    \subfigure[Confidence intervals for $\|\theta_0\|_2^2$]{
        \includegraphics[width=6.5cm]{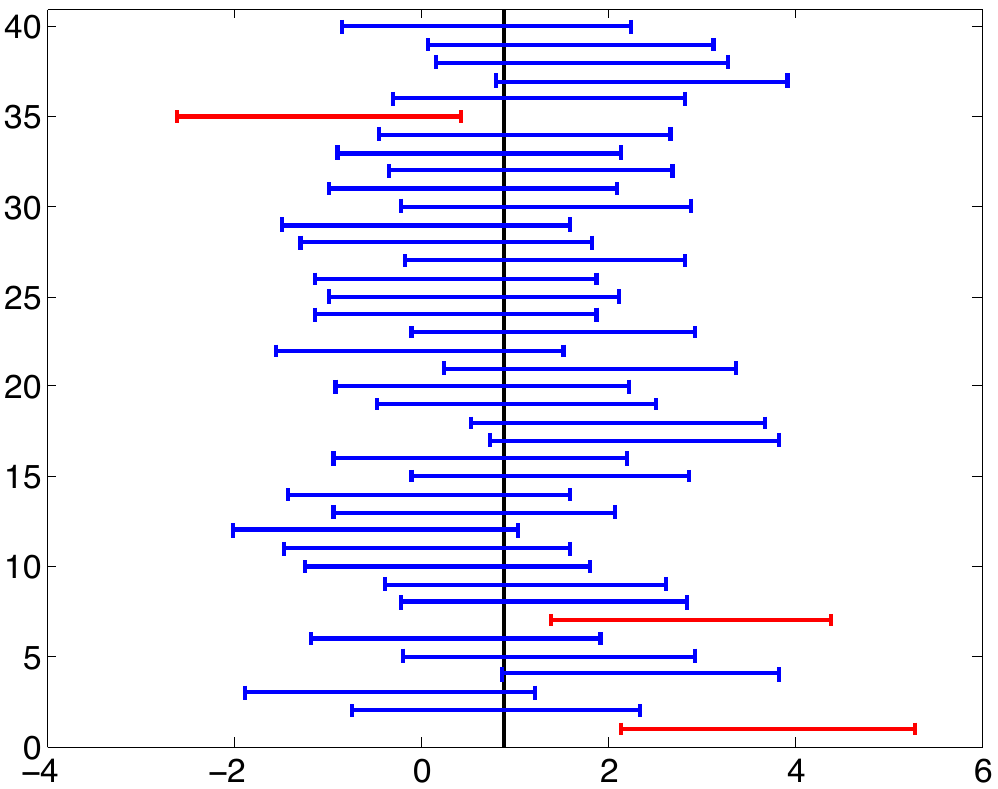}
        }
    \caption{(a) $95\%$ confidence intervals for $\xi^\sT \th_0$ (left panel) and $\|\theta_0\|_2^2$ (right panel) for riboflavin data set. The value of $\xi^\sT \theta_0$ and $\|\theta_0\|_2^2$ are indicated by the black line. A blue confidence interval covers the true value while a red one means otherwise.}\label{fig:CI-real}
\end{figure} 

\begin{table}[t]%
  \centering
  \begin{tabular}{|l|ccc|}
    \hline
    
    $\sigma$ & $1$ & $5$ & $10$ \\
    \hline
    $\xi^\sT\theta_0$ &0.96 &0.94&0.93 \\
    \hline
    $\|\theta_0\|_2^2$ &0.95 &0.93&0.94 \\

    \hline
  \end{tabular}
 \caption{Coverage rate of the confidence intervals for $\xi^\sT\theta_0$ and $\|\theta_0\|_2^2$ computed as in~\eqref{eq:cov} for the real data experiment and at various noise levels $\sigma$.}%
  \label{tbl:real}%
\end{table}


\section{Proof of Theorems}\label{proof:theorems}

\subsection{Proof of Theorem~\ref{thm:typeI}}\label{proof:typeI}
We first prove a lemma to bound the estimation error of $\hsigma$ returned by the scaled Lasso.  The following lemma uses the analysis of~\cite{SZ-scaledLasso} and its proof is given in Appendix~\ref{app:ssLasso} for reader's convenience. 
\begin{lemma}\label{ssLasso}
Under the assumptions of Theorem~\ref{thm:typeI}, let $\hsigma = \hsigma(\lambda)$ be the scaled Lasso estimator of the noise level, with $\lambda = c\sqrt{(\log p)/n}$ and define $\sigma_*  = \|w\|/\sqrt{n}$. 
Then, $\hsigma$ satisfies
\begin{align}
\prob\left(\Big|\frac{\hsigma}{\sigma^*} - 1\Big|\ge \frac{2c}{\phi_0\sigma^*} \sqrt{\frac{s_0\log p}{n}}\right)\le 2p^{-c_0}+2e^{-n/16}\,, \quad c_0 = \frac{c^2}{32K}-1\,.
\end{align}
\end{lemma}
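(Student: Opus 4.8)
The plan is to use the first-order optimality conditions of the scaled Lasso~\eqref{scaledLasso}, which split the joint problem into a noise-level equation and an ordinary penalized regression. Stationarity in $\sigma$ gives the self-consistency relation $\hsigma = \|y-X\hth\|_2/\sqrt n$, while stationarity in $\th$ shows that $\hth$ is a Lasso solution whose effective regularization level is $\lambda\hsigma$. Writing $y = X\th_0 + W$ and recalling $\sigma^* = \|W\|_2/\sqrt n$, the reverse triangle inequality yields the deterministic bound
\[
|\hsigma - \sigma^*| = \frac{1}{\sqrt n}\bigl|\,\|W - X(\hth-\th_0)\|_2 - \|W\|_2\,\bigr| \le \frac{1}{\sqrt n}\|X(\hth-\th_0)\|_2 ,
\]
so it suffices to control the Lasso prediction error $\tfrac1n\|X(\hth-\th_0)\|_2^2$ and then divide by $\sigma^*$.

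First I would introduce the noise event $\event = \{\tfrac1n\|X^\sT W\|_\infty \le \tfrac{\lambda}{4}\sigma\}$. For a deterministic design, each coordinate $\tfrac1n(X^\sT W)_j$ is a centered Gaussian with variance $\sigma^2\hSigma_{jj}/n \le \sigma^2 K/n$, so a Gaussian tail bound at level $\tfrac\lambda4\sigma$ followed by a union bound over $j\in[p]$ gives $\prob(\event^c)\le 2p^{\,1-c^2/(32K)} = 2p^{-c_0}$; this is precisely where the exponent $c_0 = \tfrac{c^2}{32K}-1$ (and the requirement $c>32K$) comes from. Separately, since $\|W\|_2^2/\sigma^2\sim\chi^2_n$ concentrates, a standard chi-square deviation inequality places $\sigma^*$ within a constant factor of $\sigma$ off an event of probability at most $2e^{-n/16}$, accounting for the second term. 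On the intersection of these events the effective penalty $\lambda\hsigma$ dominates $\tfrac2n\|X^\sT W\|_\infty$, so the usual basic-inequality argument confines $\hth-\th_0$ to the compatibility cone of $S_0 = \supp(\th_0)$.

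With the cone condition in force, the compatibility constant $\phi_0$ of $\hSigma$ over $S_0$ delivers the prediction-error oracle inequality $\tfrac1n\|X(\hth-\th_0)\|_2^2 \le 4s_0(\lambda\hsigma)^2/\phi_0^2$, exactly along the lines of~\cite{SZ-scaledLasso}. Substituting into the triangle-inequality bound gives $|\hsigma-\sigma^*|\le \tfrac{2\sqrt{s_0}\,\lambda}{\phi_0}\,\hsigma$, and dividing by $\sigma^*$ while using $\hsigma = (1+o(1))\sigma^*$ on $\event$ reproduces the stated control $\tfrac{2c}{\phi_0\sigma^*}\sqrt{s_0(\log p)/n}$ after inserting $\lambda = c\sqrt{(\log p)/n}$.

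The one genuinely delicate point is the circularity built into the scaled Lasso: the penalty $\lambda\hsigma$ used to bound $\hth-\th_0$ itself involves the very quantity $\hsigma$ we are estimating. I would resolve this as in~\cite{SZ-scaledLasso}, exploiting the joint convexity of $\tfrac{1}{2\sigma n}\|y-X\th\|_2^2 + \tfrac\sigma2 + \lambda\|\th\|_1$ in $(\th,\sigma)$ to first trap $\hsigma$ between two constant multiples of $\sigma^*$ on $\event$, and only then invoke the oracle inequality with $\lambda\hsigma\asymp\lambda\sigma^*$; a one-step bootstrap upgrades this crude two-sided bound to the sharp relative error above. The remaining ingredients---the Gaussian maximal inequality, the chi-square concentration, and the deterministic Lasso basic inequality---are routine, which is why the full argument can be confined to Appendix~\ref{app:ssLasso} with only the adaptation of~\cite{SZ-scaledLasso} requiring real care.
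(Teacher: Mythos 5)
Your proposal is correct and takes essentially the same approach as the paper: the paper's proof likewise reduces to the deterministic scaled-Lasso error bound of \cite{SZ-scaledLasso} (their Theorem 1, Eq.~(13)) on the event $\|X^\sT W\|_\infty/(n\sigma^*)\le \lambda/2$, and then bounds the probability of that event exactly as you do, splitting it into $\prob\big(\|X^\sT W\|_\infty/(n\sigma)>\lambda/4\big)+\prob\big(\sigma/\sigma^*>2\big)$ and applying a Gaussian maximal inequality (giving $2p^{-c_0}$ with $c_0=c^2/(32K)-1$) and $\chi^2$ concentration (giving $2e^{-n/16}$). The only difference is presentational: the paper invokes the Sun--Zhang deterministic bound as a black box, whereas you sketch its re-derivation (stationarity, basic inequality plus compatibility, and the bootstrap that breaks the $\lambda\hsigma$ circularity), which is the same underlying argument.
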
 


Armed with Lemmas~\ref{ssLasso} and \ref{propo:bias-size2} we are ready to prove Theorem~\ref{thm:typeI}.  Under $H_0$, we have $\th_0\in \Omega_0$ and hence by invoking Lemma~\ref{propo:bias-size2}, we have
\begin{align}
T_n &=\|\Dn(\dgamma-U^\sT \hthp)\|_\infty \le \|\Dn(\dgamma-U^\sT\th_0)\|_\infty \nonumber \\
&\le \frac{1}{\sqrt{n}}\|\Dn Z\|_\infty + \frac{1}{\sqrt{n}} \|\Dn \Delta\|_\infty\,. \label{eq:T_n}
\end{align}
Note that for $\tZ \equiv \hsigma \Dn Z/(\sigma\sqrt{n}) \in \reals^k$, we have $\tZ_i \sim\normal(0,1)$. The entries of $\tZ$ are correlated though.


Fix $\epsilon>0$ and apply Equation \eqref{eq:T_n} to write 
\begin{align}\label{Tn-tail}
 &\prob(T_n\ge x)\le \prob\left(\frac{\sigma}{\hsigma} \|\tZ\|_\infty + \frac{1}{\sqrt{n}} \|\Dn \Delta\|_\infty \ge x \right)\nonumber\\
&\le \prob\left(\frac{\sigma}{\hsigma} \|\tZ\|_\infty \ge x - \epsilon  \right) + \prob\left(\frac{1}{\sqrt{n}} \|\Dn \Delta\|_\infty \ge \epsilon \right)\nonumber\\
&\le \prob\left( \|\tZ\|_\infty \ge (1-\epsilon)(x - \epsilon) \right) + \prob\left(\left|\frac{\hsigma}{\sigma} - 1\right| \ge \epsilon \right) + \prob\left(\frac{1}{\sqrt{n}} \|\Dn \Delta\|_\infty \ge \epsilon \right)
\end{align}
For the second term, we proceed as follows
\begin{align}
\prob\left(\left|\frac{\hsigma}{\sigma} - 1\right| \ge \epsilon \right) \le \prob\left(\left|\frac{\hsigma}{\sigma^*} - 1\right| \ge \frac{\epsilon}{2} \right)
+ \prob\left(\left|\frac{\hsigma}{\sigma^*} - \frac{\hsigma}{\sigma}\right| \ge \frac{\epsilon}{2} \right)
\end{align}
Now, note that $\sigma^*\to \sigma$, in probability, as $n$ tends to infinity. Therefore, by applying Lemma~\eqref{ssLasso} and using the assumption $s_0 = o(n/\log p)$, we get
\begin{align}\label{sigma-hsigma}
 \underset{n\to \infty}{\lim\sup}\;\; \prob\left(\left|\frac{\hsigma}{\sigma} - 1\right| \ge \epsilon \right) = 0\,.
\end{align}
Using this in~\eqref{Tn-tail}, we have
\begin{align}
 \underset{n\to \infty}{\lim\sup}\;\;\prob(T_n\ge x) \le  \underset{n\to \infty}{\lim\sup}\;\;\prob\left( \|\tZ\|_\infty \ge (1-\epsilon)(x - \epsilon) \right)\nonumber\\
  + \underset{n\to \infty}{\lim\sup}\;\; \prob \left(\frac{1}{\sqrt{n}} \|\Dn \Delta\|_\infty \ge \epsilon  \right)
\end{align}


We next note that by definition~\eqref{Q},  and using the assumption $\lim\inf_{n\to\infty} \min_{i\in [k]} (G^\sT\hSigma G)_{ii} \ge c_0>0$, we have 
%
from which we obtain
\begin{align}
\lim\sup_{n\to\infty} \prob\left(\frac{1}{\sqrt{n}} \|\Dn \Delta\|_\infty \ge \epsilon \right) &\le \lim\sup_{n\to\infty} \prob \left(\frac{1}{\hsigma\sqrt{c_0}} \|\Delta\|_\infty \ge \epsilon \right)\nonumber\\
&\le \lim\sup_{n\to\infty}  \prob \left(\frac{2}{\sigma\sqrt{c_0}} \|\Delta\|_\infty > \epsilon\right) + \prob\left(\frac{\sigma}{\hsigma} \ge 2 \right)\,.
\end{align}

By Equation~\eqref{sigma-hsigma}, we have $\prob((\sigma/\hsigma) \ge 2)\to 0$. 
 In addition, since $s_0 = o(1/(\mu \sqrt{\log p}))$, for $n$ and $p$ large enough, we have $c\mu s_0 \sqrt{\log p}/\phi_0^2\le \epsilon\sqrt{c_0}/2$. Hence by \eqref{eq:delta-small},
\begin{align}
 \underset{n\to \infty}{\lim\sup}\;\; \prob\left(\frac{1}{\sqrt{n}} \|\Dn \Delta\|_\infty \ge \epsilon \right) 
 &\le
 \underset{n\to \infty}{\lim\sup}\;\; \prob \left(\|\Delta\|_\infty > \frac{\epsilon\sigma\sqrt{c_0}}{2} \right) \nonumber\\
&\le 
  \underset{n\to \infty}{\lim\sup}\;\; (2p^{-c_0}+2e^{-n/16}) = 0\,.\label{DnLimit}
\end{align}
By substituting~\eqref{DnLimit} in~\eqref{Tn-tail}, we get  
\begin{align}
 \underset{n\to \infty}{\lim\sup}\;\; \prob(T_n\ge x) \le  \underset{n\to \infty}{\lim\sup}\;\; \prob(\|\tZ\|_\infty \ge x-\epsilon x +\epsilon^2).\label{eq:asymp0}
\end{align}
By union bounding over the entries of $\tilde{Z}$, we get
\begin{align}
\prob(\|\tZ\|_\infty \ge x-\epsilon x +\epsilon^2) \le    2k(1-\Phi(x-\epsilon x +\epsilon^2)).
\end{align}
Observe that the above holds for any $\epsilon>0$, and that the right-hand side is bounded pointwise for all $\epsilon$. Therefore, by applying the dominated convergence theorem, we get
\begin{align*}
 \underset{n\to \infty}{\lim\sup}\;\; \prob(T_n\ge x)  \le 2k ( 1- \Phi(x)). 
\end{align*}
The result follows by choosing $x = \Phi^{-1}(1-\alpha/(2k))$.
\subsection{Proof of Theorem~\ref{thm:typeI-random}}\label{proof:typeI-random}
For $\phi_0,s_0,K\ge 0$, let 
$\event_n=\event_n(\phi_0,s_0,K)$ be the event that the compatibility
condition holds for $\hSigma=(X^{\sT}X/n)$,  
for all sets $S\subseteq [p]$, $|S|\le s_0$  with constant $\phi_0>0$, and
that $\max_{i\in [p]}\, \hSigma_{i,i} \le K$. Explicitly 
\begin{align}
\event_n(\phi_0,s_0,K) \equiv\Big\{X\in\reals^{n\times
  p}:\,\;\min_{S:\; |S|\le s_0}\phi(\hSigma,S)\ge \phi_0,\;\; 
\max_{i\in [p]}\, \hSigma_{i,i} \le K, \;\; \hSigma = (X^{\sT}X/n)\Big\}\, .
\end{align}
Then, by result of~\cite[Theorem 6]{rudelson2013reconstruction} (see also~\cite[Theorem 2.4(a)]{javanmard2014confidence}), random designs satisfy the compatibility condition with
constant $\phi_0 = \sqrt{C_{\min}}/2$, provided that $n\ge \nu s_0 \log(p/s_0)$, where $\nu = c \kappa^4(C_{\max}/C_{\min})$, for a constant $c>0$. More precisely,
\begin{align}\label{En-prob}
\prob(X \in \event_n(\sqrt{C_{\min}}/2,s_0,K) )\ge 1- 4e^{-c_1n/\kappa^4}\,,
\end{align}
where $c_1 = c_1(c)>0$ is a constant.

We next provide an explicit upper bound for the minimum generalized coherence $\mu_{\min}(X;U)$ (cf. Definition~\ref{mu*}) for random designs. 

\begin{propo}[~\cite{javanmard2014confidence}]\label{pro:mu-random}
Let $\Sigma \in \reals^{p\times p}$ be such that $\sigma_{\min}(\Sigma) \ge C_{\min}>0$ and $\sigma_{\max}(\Sigma) \le C_{\max} <\infty$ and $\max_{i\in[p]} \Sigma_{ii}\le 1$. 
Suppose that $X\Sigma^{-1/2}$ has independent sub-gaussian rows, with mean zero and sub-gaussian norm $\|\Sigma^{-1/2} x_1\|_{\psi_2} = \kappa$, for some constant $\kappa>0$.
For $U\in \reals^{p\times k}$ independent of $X$ satisfying $U^\sT U = I$, and for fixed constant $a>0$, define
\begin{align}\label{def:Gn}
\cG_n(a) \equiv \Big\{X\in \reals^{n\times p}:\, \mu_{\min}(X;U) < a\sqrt{\frac{\log p}{n}} \Big\}\,.
\end{align}
In other words, $\cG_n(a)$ is the event that problem~\eqref{OPT:M-n} is feasible for $\mu = a\sqrt{(\log p)/n}$. Then, for $n\ge a^2 C_{\min} \log p/(4e^2 C_{\max}\kappa^4)$, the following holds true
with high probability
\begin{align}\label{Gn-prob}
\prob(X\in \cG_n(a)) \ge 1- 2p^{-c_2}\,,\quad c_2  = \frac{a^2 C_{\min}}{24 e^2 \kappa^4 C_{\max}}-2.
\end{align}
\end{propo}
We refer to Appendix~\ref{proof:mu-random} for the proof of Proposition~\ref{pro:mu-random}.
\rev{

The last step is to prove that Assumption~\ref{ass:var} holds. In doing that, we use Lemma~\ref{lem:sufficient}. Note that the first condition of this lemma holds by assumption of the theorem. To prove the second condition, we use the following result.
\begin{lemma}\label{lem:concentration}
Let $\Sigma\in \reals^{p\times p}$ such that $\sigma_{\min}(\Sigma) \ge C_{\min}>0$ and $\sigma_{\max}(\Sigma) \le C_{\max} <\infty$ and $\max_{i\in[p]} \Sigma_{ii}\le 1$. 
Suppose that $X\Sigma^{-1/2}$ has independent sub-gaussian rows, with mean zero and sub-gaussian norm $\|\Sigma^{-1/2} x_1\|_{\psi_2} = \kappa$, for some constant $\kappa>0$.
Let $\hSigma\equiv (X^\sT X)/n$. For $u_i\in \reals^p$ independent of $X$, we have
\begin{align}\label{eq:quadratic}
\prob\left(u_i^\sT (\hSigma - \Sigma) u_i \ge C\sqrt{\frac{\log p}{n}} \right)\le p^{-c}\,,
\end{align}
for a constant $C>0$ depending on $\kappa$, $C_{\max}$, and $c>2$ depending on $C$.
\end{lemma}
We refer to Appendix~\ref{proof:lem-concentration} for the proof of Lemma \ref{lem:concentration}. The second condition of Lemma \ref{lem:sufficient} follows from $u_i^\sT \Sigma u_i\le C_{\max}\|u_i\|^2 = C_{\max}$,  union bounding over $i\in [k]$ and Lemma~\ref{lem:concentration} (along with Borel-Cantelli Lemma).
}

%


Putting the three probabilistic bounds~\eqref{En-prob},~\eqref{Gn-prob} and \eqref{eq:quadratic} together in Theorem~\ref{thm:typeI}, we obtain that for random designs with $s_0 = o(\sqrt{n}/(\log p))$, we have $\underset{n\to \infty}{\lim\sup}\;\; \alpha_n(R_X) \le \alpha$.

\subsection{Proof of Theorem~\ref{thm:power}}\label{proof:power}
We start by stating a lemma that will be used later in the proof. 
\rev{
\begin{lemma}\label{lem:random-deter}
Under the assumptions of Theorem~\ref{thm:typeI-random},  for any $ i\in [k]$ we have 
\begin{align*}
\prob\Big(g_i^\sT \hSigma g_i \ge u_i^\sT\Sigma^{-1} u_i+C \sqrt{\frac{\log p}{n}}\Big)\le
2\, p^{-c}\, ,
\end{align*}
where $c$ is a constant depending on $a, C$ and by a suitable choice of them, we have $c\ge 2$. 

\end{lemma}
We refer to Appendix~\ref{proof:random-deter} for the proof of Lemma~\ref{lem:random-deter}. 
\begin{coro}
Assuming the setting of  Theorem~\ref{thm:typeI-random}, by an application of Borel-Cantelli lemma and using Lemma~\ref{lem:random-deter}, of any $i\in [k]$ we have almost surely
\begin{align}
\lim\sup_{n\to \infty} [g_i^\sT \hSigma g_i - u_i^\sT \Sigma^{-1}u_i]\le 0\,.
\end{align}
\end{coro}
}
Recalling the definition of $m_0$, given by~\eqref{def:m0}, we have the following corollary.  

\begin{coro}\label{coro:m0}
Recalling the definition of $m_0$ given by~\eqref{def:m0}, for any $i\in[k]$, we have almost surely
\begin{align}
\lim\sup_{n\to \infty} [g_i^\sT \hSigma g_i - m_0^2]\le 0\,.
\end{align}
\end{coro}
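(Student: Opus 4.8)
The plan is to deduce the corollary directly from Lemma~\ref{random-deter} together with the definition of $m_0$; no new probabilistic input is needed. The single arithmetic observation driving everything is that $m_0^2 = \max_{j\in[k]}(u_j^\sT \Sigma^{-1} u_j + 10^{-4})$ is a maximum over $j$, so it dominates the $i$-th term for \emph{every} fixed index $i\in[k]$:
\[
m_0^2 \ge u_i^\sT \Sigma^{-1} u_i + 10^{-4}\,.
\]

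First I would use this to rewrite the bracketed quantity so that the additive constant $10^{-4}$ cancels. For each fixed $i$,
\[
g_i^\sT \hSigma g_i + 10^{-4} - m_0^2 \;\le\; g_i^\sT \hSigma g_i + 10^{-4} - \big(u_i^\sT \Sigma^{-1} u_i + 10^{-4}\big) \;=\; g_i^\sT \hSigma g_i - u_i^\sT \Sigma^{-1} u_i\,.
\]
Next I would invoke the monotonicity of $\lim\sup$ — if $a_n \le b_n$ for all large $n$ along a given sample path, then $\lim\sup_n a_n \le \lim\sup_n b_n$ — to pass from this pathwise inequality to
\[
\lim\sup_{n\to \infty}\big[g_i^\sT \hSigma g_i + 10^{-4} - m_0^2\big] \;\le\; \lim\sup_{n\to \infty}\big[g_i^\sT \hSigma g_i - u_i^\sT \Sigma^{-1} u_i\big]\,.
\]
Finally I would apply Lemma~\ref{random-deter}, which states that the right-hand side is at most $0$ almost surely, yielding the claim.

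There is essentially no obstacle here: the corollary is an immediate consequence of Lemma~\ref{random-deter} once one notices that the constant $10^{-4}$ appearing in both $Q$ (hence in $g_i^\sT \hSigma g_i + 10^{-4}$) and in $m_0^2$ cancels, and that replacing $u_i^\sT\Sigma^{-1}u_i$ by its maximum over $j$ can only \emph{increase} the subtracted quantity. The only point worth stating with care is that the dominating inequality $m_0^2 \ge u_i^\sT\Sigma^{-1}u_i + 10^{-4}$ holds for each fixed $i$ (not just the maximizing index), which is precisely what makes the conclusion valid simultaneously for all $i\in[k]$, as asserted.
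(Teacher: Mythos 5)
Your proof is correct and follows exactly the route the paper intends: the paper states this corollary without proof as an immediate consequence of Lemma~\ref{random-deter}, relying on precisely the observation you make, namely that $m_0^2 \ge u_i^\sT \Sigma^{-1} u_i + 10^{-4}$ for every fixed $i\in[k]$, so the constants cancel and the claim reduces pathwise to the lemma's conclusion. Nothing is missing.
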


Let $z_* \equiv \Phi^{-1}(1-\alpha/(2k))$ and write
\begin{align}
&\underset{n\to \infty}{\lim\inf}\;\; \frac{1-\beta_n(R_X)}{1-\beta_n^*(\eta)}\nonumber\\
&= \underset{n\to \infty}{\lim\inf}\;\; \frac{1}{1-\beta_n^*(\eta)}\; \inf_{\th_0} \Big\{\prob_{\th_0}(R_X =1):\; \|\th_0\|_0\le s_0,\; \de(\th_0,\Omega_0) \ge \eta \Big\}\nonumber\\
&= \underset{n\to \infty}{\lim\inf}\;\; \frac{1}{1-\beta_n^*(\eta)}\; \inf_{\th_0} \Big\{\prob\Big(\|\Dn(\dgamma-U^\sT \hthp)\|_\infty \ge z_* \Big):\; \|\th_0\|_0\le s_0,\; \de(\th_0,\Omega_0) \ge \eta \Big\}\label{eq:P0}
\end{align}
We define the shorthands $v\equiv \Dn U^\sT(\hthp-\th_0)$ and $\tilde{v} \equiv\Dn(\dgamma-U^\sT\th_0)$. Note that $v, \tilde{v} \in \reals^k$. We further let $i^\star \equiv \arg\max_{i\in [k]} |v_i|$. Then, we can write 
\begin{align}\label{eq:v-1}
\|\Dn(\dgamma-U^\sT \hthp)\|_\infty = |v-\tilde{v}|_\infty \ge |v_{i^\star} - \tilde{v}_{i^\star}| 
\end{align}
By a very similar argument we used to derive Equation~\eqref{eq:asymp0}, we can show that for any fixed $i\in [k]$ and all $x\in \reals$, we have
\begin{align}\label{normality}
\lim\sup_{n\to \infty} \sup_{\|\theta_0\|_0\le s_0} |\prob(\tilde{v}_i \le x) \le \Phi(x)| = 0\,.
\end{align}
In words, each coordinate of $\tilde{v}$ asymptotically admits a standard normal distribution.

The other remark we want to make is about the quantity $\|{v}\|_\infty$, which will be a key factor in determining the power of the test. Because $\hthp\in\Omega_0$, we have 
\begin{align}\label{vstarB}
|{v}_{i^\star}| = \|{v}\|_\infty \ge \min_{i\in [k]}(D_{ii})\; \|U^\sT(\hthp-\th_0)\|_\infty \ge \min_{i\in [k]}(D_{ii})\; \de(\th_0,\Omega_0) \ge \eta \min_{i\in [k]}(D_{ii})\,.
\end{align}

Continuing with \eqref{eq:P0}, we write
\begin{align}
&\underset{n\to \infty}{\lim\inf}\;\; \frac{1-\beta_n(R_X)}{1-\beta_n^*(\eta)}\nonumber\\
&=\underset{n\to \infty}{\lim\inf}\;\; \frac{1}{1-\beta_n^*(\eta)} \inf_{\th_0} \Big\{\prob\Big(\|\Dn(\dgamma-U^\sT \hthp)\|_\infty \ge z_* \Big):\; \|\th_0\|_0\le s_0,\; \de(\th_0,\Omega_0) \ge \eta \Big\}\nonumber\\
&\stackrel{(a)}{\ge} \underset{n\to \infty}{\lim\inf}\;\; \frac{1}{1-\beta_n^*(\eta)} \inf_{\th_0} \Big\{\prob\left(|v_{i^\star} - \tilde{v}_{i^\star}| \ge z_* \right):\; |v_{i^\star}| \ge \eta  \min_{i\in [k]}(D_{ii}) \Big\}\nonumber\\
&= \underset{n\to \infty}{\lim\inf}\;\; \frac{1}{1-\beta_n^*(\eta)} \left(1-\sup_{\th_0} \Big\{\prob\left(|v_{i^\star} - \tilde{v}_{i^\star}| \le z_* \right):\; |v_{i^\star}| \ge \eta  \min_{i\in [k]}(D_{ii}) \Big\} \right) \nonumber\\
&\ge \underset{n\to \infty}{\lim\inf}\;\; \frac{1}{1-\beta_n^*(\eta)} \left(1-\sup_{\th_0} \Big\{\prob\left(\exists j\in [k]: |v_{i^\star} - \tilde{v}_{j}| \le z_* \right):\; |v_{i^\star}| \ge \eta  \min_{i\in [k]}(D_{ii}) \Big\} \right) \nonumber\\
&\ge \underset{n\to \infty}{\lim\inf}\;\; \frac{1}{1-\beta_n^*(\eta)} \left(1-k \sup_{\th_0} \Big\{\prob\left(|v_{i^\star} - \tilde{v}_{1}| \le z_* \right):\; |v_{i^\star}| \ge \eta  \min_{i\in [k]}(D_{ii}) \Big\} \right) \nonumber\\
&\stackrel{(b)}{\ge} \underset{n\to \infty}{\lim\inf}\;\; \frac{1}{1-\beta_n^*(\eta)} \left(1-k  \prob\Big(\Big|\frac{\sqrt{n}\eta}{\hsigma m_0} - Z\Big| \le z_* \Big) \right) \nonumber\\
& = \underset{n\to \infty}{\lim\inf}\;\; \frac{1}{1-\beta_n^*(\eta)} \Big(1-k\,\Big\{\Phi\Big(\frac{\sqrt{n}\eta}{\hsigma m_0}+z_*\Big)- \Phi\Big(\frac{\sqrt{n}\eta}{\hsigma m_0}-z_*\Big)\Big\}\Big)\nonumber\\
& \stackrel{(c)}{=} \underset{n\to \infty}{\lim\inf}\;\; \frac{1}{1-\beta_n^*(\eta)} \myF\Big(\alpha,\frac{\sqrt{n}\eta}{\hsigma m_0}, k\Big) = 1\,,
\end{align}
where $(a)$ follows from Equations~\eqref{eq:v-1} and~\eqref{vstarB}; $(b)$ holds because of Corollary~\ref{coro:m0} and Equation~\eqref{normality}. Here $Z$ is a standard normal variable; $(c)$ follows by substituting for $z_*$. 

\rev{
\subsection{Proof of Theorem~\ref{thm:typeI-random-aps}} \label{proof:thm-typeI-random-aps}
The proof goes along the same lines of the proof of Theorem~\ref{thm:typeI} and \ref{thm:typeI-random}. 

Defining $r = X\theta_*-X \theta_0$ and by plugging in for $y = X\theta_* +w = X\theta_0+ r + w$ in the definition~\eqref{dgamma}, we get
\begin{align}
\dgamma &= U^\sT \htheta + \frac{1}{n} G^\sT X^\sT X(\theta_0 - \htheta) + \frac{1}{n} G^\sT X^\sT r + \frac{1}{n} G^\sT X^\sT w \nonumber\\
& = U^\sT \theta_0 + (G^\sT \hSigma - U^\sT) (\theta_0 - \htheta) +   \frac{1}{n} G^\sT X^\sT r + \frac{1}{n} G^\sT X^\sT w \nonumber\\
& = U^\sT \theta_0 + \frac{1}{\sqrt{n}} \Delta + \frac{1}{\sqrt{n}} Z\,,
\end{align}
with 
\[
\Delta \equiv \Delta_1 + \Delta_2\,, \quad \Delta_1\equiv \sqrt{n} (G^\sT \hSigma - U^\sT) (\theta_0 - \htheta)\,,\quad  \Delta_2 \equiv   \frac{1}{\sqrt{n}} G^\sT X^\sT r\,, \quad
Z \equiv   \frac{1}{\sqrt{n}} G^\sT X^\sT w\,.
\]
Sine $w\sim \normal(0,\sigma^2\id_{n\times n})$, we have $Z|X\sim \normal(0,\sigma^2G^\sT\hSigma G)$. We next bound $\|\Delta\|_\infty$. 

It is straightforward to see that the assumptions of Theorem~\ref{thm:typeI-random-aps} implies the assumption of Lemma~\ref{lem:moment} and hence by the result of the lemma, the moment conditions (Assumption~\ref{ass:moment}) hold. 
To deal with $\Delta_1$, we use the following result from \cite{belloni2012sparse} that bounds the $\ell_1$ error of the iterated Lasso estimator under the Assumptions~\ref{ass:app-sparsity} and \ref{ass:moment}.
\begin{propo}\label{propo:aps} (\cite[Theorem 1]{belloni2012sparse})
Suppose that in the regression model~\eqref{eq:app-sparsity-model}, Assumption~\ref{ass:app-sparsity} (approximate sparsity) and Assumption~\ref{ass:moment} (Moment Conditions) hold. Let $\hth$ be the iterated lasso estimator~\eqref{eq:pen} with weights $\gamma_j$ specified by Algorithm~\ref{lambda:iterated}. Then, $\hth$ satisfies
\begin{align}
\|\hth - \theta_0\|_1\le C C_{\min}^{-1} s_0 \sqrt{\frac{\log p}{n}} \,,
\end{align}
with high probability, for some finite constant $C>0$.
\end{propo}

Now let $\cE_n$ be the probability event that $\|\hth - \theta_0\|_1 \le CC_{\min}^{-1} s_0 \sqrt{(\log p)/n}$. Recall the event $\cG_n(a)$ from~\eqref{def:Gn} and define $\cF_n \equiv \cG_n(a) \cap \cE_n$. Then, by using Propositions~\ref{pro:mu-random} and \ref{propo:aps}, we have the $\cF_n$ happens with high probability. Further, on the event $\cF_n$ we have
\begin{align}\label{Delta-1-B}
\|\Delta_1\| \le \sqrt{n} \times a\sqrt{\frac{\log p}{n}} \times C C_{\min}^{-1} s_0 \sqrt{\frac{\log p}{n}} =  C C_{\min}^{-1} a s_0 \frac{\log p}{\sqrt{n}}\,.
\end{align}
We next bound $\Delta_2$. Write
\begin{align*}
\|\Delta_2\|_\infty \le \left(\max_{i\in [k]} \Big\|\frac{1}{\sqrt{n}} X g_j \Big\| \right) \|r\|\,.
\end{align*}
Using lemma~\ref{lem:random-deter}, we have
\[
\Big\|\frac{1}{\sqrt{n}} Xg_i \Big\|^2 = g_i^\sT\hSigma g_i \le u_i^\sT \Sigma^{-1} u_i + C\sqrt{\frac{\log p}{n}} \le \frac{1}{C_{\min}} + C\sqrt{\frac{\log p}{n}} < C'\,,
\]
with $C' = 1/C_{\min}+ C$, and with probability at least $1 - 2p^{-c}$, for $c\ge 2$. By union bounding over $i\in [k]$, we get 
\[
\max_{i\in [k]}\, \Big\|\frac{1}{\sqrt{n}}Xg_i\Big\| \le C'\,,
\]
with probability at least $1 - 2kp^{-c} \ge 1 - 2p^{-c+1}$. Using Assumption~\ref{ass:app-sparsity}, $\|r\| = o_P(1)$, which gives us
\begin{align}\label{Delta-2-B}
\|\Delta_2\|_\infty = o_P(1)\,.
\end{align}
Combining~\eqref{Delta-1-B} and \eqref{Delta-2-B}, we have
\[
\|\Delta\|_\infty = O_P\Big(s_0 \frac{\log p}{\sqrt{n}}\Big) + o_P(1)\,.
\] 
Hence $\|\Delta\|_\infty = o_p(1)$ and $Z|X$ is asymptotically normally distributed. Having this result, we can then follows the lines of the proof of Theorem~\ref{thm:typeI-random} to show that our procedure controls the type I error, that is $\lim\sup_{n\to\infty} \alpha_n(R_X)\le \alpha$.
}
\section*{Acknowledgements}
A. Javanmard was partially supported by an Outlier Research in Business (iORB) grant from the USC Marshall School of Business, a Google Faculty Research award and the NSF CAREER award DMS-1844481.

\newpage
\appendix
\section{Proof of Technical Lemmas}
\rev{
\subsection{Proof of Lemma~\ref{lem:sufficient}}\label{app:lem-sufficient}
We start by providing a non-asymptotic lower bound on $(G^\sT \hSigma G)_{i,i}$. 
\begin{lemma}\label{lem:LB-opt}
Let $G$ be the matrix with rows $g_i^\sT$ obtained by solving optimization~\eqref{OPT:M-n}. Then, for all $i\in [p]$,
\[
(G^\sT \hSigma G)_{i,i} \ge  \frac{(1-\mu\|u_i\|_1)^2}{u_i^\sT\hSigma u_i}\,.
\]
\end{lemma}
Using this lemma we write
\begin{align*}
\lim\inf_{n\to\infty} \min_{i\in[k]}\, (G^\sT \hSigma G)_{i,i} &\ge \lim\inf_{n\to\infty} \min_{i\in[k]}  \frac{(1-\mu\|u_i\|_1)^2}{u_i^\sT\hSigma u_i}\\
&\ge \left(\lim\inf_{n\to\infty} \min_{i\in[k]}\, (1-\mu\|u_i\|_1)^2 \right)  \left(\lim\sup_{n\to\infty} \max_{i\in[k]} u_i^\sT\hSigma u_i \right)^{-1} \\
&\ge \left(\lim\inf_{n\to\infty}\, (1-\mu \max_{i\in[k]} \|u_i\|_1)^2 \right) \times C^{-1}\\
&\ge (1-c)^2 C^{-1}\,,
\end{align*}
which completes the proof.

\subsubsection{Proof of Lemma~\ref{lem:LB-opt}}
The proof proceeds as the proof of~\cite[Lemma 3.1]{javanmard2014confidence}. Let $C_i(\mu)$ be the solution of optimization~\eqref{OPT:M-n}.  We write
\begin{align*}
\<u_i,u_i - \hSigma g\> \le \|u_i\|_1 \|u_i - \hSigma g\|_\infty \le \mu \|u_i\|_1\,. 
\end{align*}
Hence, for feasible $\tilde{g}$ and any $c\ge 0$, and by using that $\|u_i\| = 1$ for $i\in [k]$, 
\[
\tilde{g}^\sT \hSigma \tilde{g} \ge \tilde{g}^\sT \hSigma \tilde{g} + c(1 - \mu\|u_i\|_1)  - cu_i^\sT \hSigma \tilde{g} \ge \min_g\Big\{{g}^\sT \hSigma {g} + c(1 - \mu\|u_i\|_1)  - cu_i^\sT \hSigma {g} \Big\}\,.
\]
Then by minimizing over all feasible $\tilde{g}$, 
\[
C_i(\mu) \ge  \min_g\Big\{{g}^\sT \hSigma {g} + c(1- \mu\|u_i\|_1)  - cu_i^\sT \hSigma {g} \Big\}\,.
\]
The minimum of the right hand side is achieved for $g = cu_i/2$ which implies that 
\[
C_i(\mu) \ge c(1 - \mu\|u_i\|_1) - \frac{c^2}{4} (u_i^\sT \hSigma u_i)\,.
\]
The claim follows by optimizing over $c\ge 0$.

\subsection{Proof of Lemma~\ref{lem:concentration}}\label{proof:lem-concentration}
Fix $i\in [k]$ and write $u_i^\sT \hSigma u_i = \frac{1}{n}\sum_{\ell=1}^n (u_i^\sT x_\ell)^2$. Let $V_\ell = u_i^\sT x_\ell$ then $\E(V_\ell^2) = u_i^\sT \Sigma u_i$. Further, using the sub-gaussian assumption on the covariates $x_i$, we have
\[
\|V_\ell\|_{\psi_2} \le \|\Sigma^{1/2} u_i\|_2 \|\Sigma^{-1/2} x_\ell\|_{\psi_2} \le {\kappa}\sqrt{C_{\max}}\,.
\]
Let $S_\ell = V_\ell^2 - u_i^\sT \Sigma u_i$. Then $S_\ell$ is zero mean and its sub-exponential norm can be bounded as $\|S_\ell\|_{\psi_1} \le 2 \|V_\ell^2\|_{\psi_1} \le 2 \|V_\ell^2\|_{\psi_1}\le 4\|V_\ell\|_{\psi_2}^2 \le 4 \kappa^2 C_{\max} \equiv C'$. Therefore, by an application of Bernstein inequality for centered sub-exponential random variables~\cite{Vershynin-CS} (similar to the
proof of Lemma \ref{lem:main_lem}), we have that for $\eps\le eC'$\,,
\[
\prob\left( u_i^\sT \hSigma u_i\ge u_i^\sT \Sigma u_i +\eps \right) \le \exp\Big[-\frac{n}{6}\min\Big((\frac{\eps}{eC'})^2, \frac{\eps}{eC'} \Big) \Big] \,.
\] 
For $\eps = C\sqrt{(\log p)/n}$ and assuming $n\ge [C/(eC')]^2 \log p$, we obtain 
\[
\prob\left( u_i^\sT \hSigma u_i\ge u_i^\sT \Sigma u_i + C\sqrt{\frac{\log p}{n}} \right) \le p^{-C^2/(6e^2C'^2)} \,.
\]
The result follows.
}

\subsection{Proof of Lemma~\ref{lem:k1}}\label{proof:lem:k1}
Consider the following two optimization problems:
\begin{align*}
\underset{{k\in [p], U\in \reals^{p\times k}}}{\text{maximize}} \quad &\myF\left(\alpha,\frac{1}{\hsigma} \sqrt{n C_{\min}}\, \de(\hth^{(1)},\Omega_0;U), k\right) \quad \text{ subject to }\quad  \; U^\sT U = \id_k\,. \, \;\;\quad\quad\quad \quad\quad\quad ({{\rm P}}_1)\\
\underset{{u\in \reals^{p\times 1}}}{\text{maximize}} \quad &\myF\left(\alpha,\frac{1}{\hsigma} \sqrt{n C_{\min}}\, \de(\hth^{(1)},\Omega_0;u), 1\right) \quad \text{ subject to } \quad\|u\|_2 = 1\,.  \quad \quad  \quad\quad
\quad\quad\quad\;\;\,({{\rm P}}_2)
\end{align*}
Let ${{\sf OPT}}_1$ and ${{\sf OPT}}_2$ respectively denote the optimal value of problems $({{\rm P}}_1)$ and $({{\rm P}}_2)$. Clearly ${{\sf OPT}}_1 \ge {{\sf OPT}}_2$.
We next show the reverse side.

First note that
\begin{align}\label{eq:dum1}
\inf_{\theta\in \Omega_0 }\|U^\sT(\theta - \hth^{(1)})\|_\infty = \inf_{\theta\in \Omega_0} \max_{v: \|v\|_1\le 1} v^\sT U^\sT (\theta - \hth^{(1)})\,.
\end{align}
Since the right-hand side is linear in $v$ and $\theta$, and $\Omega_0$ is convex, by Von Neumann's minimax theorem, we have
\begin{align}\label{eq:dum2}
\inf_{\theta\in \Omega_0} \max_{v: \|v\|_1\le 1} v^\sT U^\sT (\theta - \hth^{(1)}) =  \max_{v: \|v\|_1\le 1} \inf_{\theta\in \Omega_0} v^\sT U^\sT (\theta - \hth^{(1)})\,.
\end{align}
Let $\tilde{v} = U v$. Since $U$ has orthonormal columns we have $\|\tilde{v}\|_2 = \|v\|_2\le \|v\|_1 \le 1$. Using this observation along with Equations~\eqref{eq:dum1} and~\eqref{eq:dum2}, we get
\begin{align}\label{eq:dum3}
\inf_{\theta\in \Omega_0 }\|U^\sT(\theta-\hth^{(1)})\|_\infty \le  \max_{u: \|u\|_2\le 1} \inf_{\theta\in \Omega_0} u^\sT (\theta-\hth^{(1)})\,.
\end{align}
Therefore, for any $U\in \cJ$, there exists unit norm vector $u\in \reals^p$, such that
\begin{align}\label{eq:dum4}
\de(\hth^{(1)},\Omega_0;U)\le  \de(\hth^{(1)},\Omega_0;u)\,. 
\end{align}

Before we proceed with the rest of the proof we state a lemma about the function $G$.
\begin{lemma}\label{lem:G-dec}
The function $k\mapsto \myF(\alpha,x,k)$ is strictly decreasing in $k$.
\end{lemma}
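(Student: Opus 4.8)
The plan is to prove the slightly stronger statement that the continuous extension $t\mapsto \myF(\alpha,x,t)$, $t\in[1,\infty)$, is strictly decreasing; restricting to integer values $k$ then gives the lemma. Throughout I take $\alpha\in(0,1]$ and $x\ge 0$, the regime relevant to Theorem~\ref{thm:power} (for $\alpha=0$ the function is identically $1$). Writing $\varphi=\Phi'$ for the standard normal density and $\bar\Phi=1-\Phi$, set $z_t\equiv\Phi^{-1}(1-\alpha/(2t))$, so that $\myF(\alpha,x,t)=1-h(t)$ with $h(t)=t\,[\Phi(x+z_t)-\Phi(x-z_t)]$. It therefore suffices to show that $h$ is strictly increasing.

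The first step is a reparametrization that avoids differentiating $z_t$ directly. Since $\bar\Phi(z_t)=\alpha/(2t)$, we have $t=\alpha/(2\bar\Phi(z_t))$, and the map $t\mapsto z_t$ is a strictly increasing bijection of $[1,\infty)$ onto $[z_1,\infty)$, where $z_1=\Phi^{-1}(1-\alpha/2)\ge 0$ (indeed, as $z$ increases from $z_1$ to $\infty$, $\bar\Phi(z)$ decreases from $\alpha/2$ to $0$, so $t$ increases from $1$ to $\infty$). Consequently $h$ is increasing in $t$ if and only if it is increasing in $z$, and expressed as a function of $z$,
\[
h=\frac{\alpha}{2}\,\psi(z),\qquad \psi(z)=\frac{\Phi(x+z)-\Phi(x-z)}{\bar\Phi(z)}\,.
\]

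The second step is to compute $\psi'$ and read off its sign. Using $\tfrac{d}{dz}[\Phi(x+z)-\Phi(x-z)]=\varphi(x+z)+\varphi(x-z)$ and $\tfrac{d}{dz}\bar\Phi(z)=-\varphi(z)$, the quotient rule gives
\[
\psi'(z)=\frac{[\varphi(x+z)+\varphi(x-z)]\,\bar\Phi(z)+[\Phi(x+z)-\Phi(x-z)]\,\varphi(z)}{\bar\Phi(z)^2}\,.
\]
For $z\ge 0$ every factor in the numerator is nonnegative: the densities are positive, $\bar\Phi(z)>0$ for finite $z$, and $\Phi(x+z)\ge\Phi(x-z)$ because $x+z\ge x-z$. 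Moreover the first summand is strictly positive, so $\psi'(z)>0$ on the entire range $z\ge z_1\ge 0$. Hence $\psi$, and therefore $h$, is strictly increasing, and $\myF$ is strictly decreasing in $k$.

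I do not expect a serious obstacle here: the one point requiring care is the reparametrization, which is precisely what converts an otherwise awkward chain-rule computation (the factor $dz_t/dt=(\alpha/(2t^2))/\varphi(z_t)$ would otherwise appear) into the transparent nonnegativity argument above. The only place strictness could degenerate is the limit $z\to\infty$ (equivalently $\alpha=0$ or $t\to\infty$), which is excluded on the relevant range, so the conclusion is genuinely strict.
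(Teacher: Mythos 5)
Your proof is correct and is essentially the paper's argument: both treat $k$ as a continuous variable and reduce the lemma to a sign check on a derivative, using the same three facts (positivity of $\varphi$, positivity of $1-\Phi(z)$, and $\Phi(x+z)\ge\Phi(x-z)$ for $z\ge 0$); your substitution $t=\alpha/(2(1-\Phi(z)))$ merely repackages the chain-rule factor $\alpha/(2y^{2}\varphi(z))$ that the paper carries explicitly in its formula for $\partial \myF/\partial y$. One immaterial slip: at $\alpha=0$ one has $\myF(0,x,k)=1-k$, not $\myF\equiv 1$ as your parenthetical claims (so the lemma in fact still holds there), but that case lies outside your stated range anyway.
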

Now choose any $U\in \cJ$ and choose unit norm $u$ that satisfies~\eqref{eq:dum4}. Then,
\begin{align*}
{{\sf OPT}}_1 = \myF\left(\alpha,\frac{1}{\hsigma} \sqrt{n C_{\min}}\, \de(\hth^{(1)},\Omega_0;U), k\right)
& \le \myF\left(\alpha,\frac{1}{\hsigma} \sqrt{n C_{\min}}\, \de(\hth^{(1)},\Omega_0;u), k\right)\\
&\le \myF\left(\alpha,\frac{1}{\hsigma} \sqrt{n C_{\min}}\, \de(\hth^{(1)},\Omega_0;u), 1\right)\,, 
\end{align*} 
where the first inequality follows from monotonicity of $\myF(\alpha,x,k)$ in $x$ and the second inequality follow from Lemma~\ref{lem:G-dec}.
This implies that ${{\sf OPT}}_1\le {{\sf OPT}}_2$. 

Therefore ${{\sf OPT}}_1 = {{\sf OPT}}_2$ which completes the proof. Indeed, we have proved a stronger claim that $\cJ$ only includes one-dimensional subspaces ($k=1$). This follows readily from the above proof and the fact that $\myF(\alpha,x,k)$ is \emph{strictly} decreasing in $k$ as per Lemma~\ref{lem:G-dec}.

\subsubsection{Proof of Lemma~\ref{lem:G-dec}}
Recall the definition of $\myF$ given by
\[
\myF(\alpha,x,y) = 1 - y\Big\{\Phi\Big(x+\Phi^{-1}\Big(1-\frac{\alpha}{2y}\Big) \Big) - \Phi\Big(x - \Phi^{-1}\Big(1-\frac{\alpha}{2y}\Big) \Big) \Big\}\,.
\]
Let $z = \Phi^{-1}(1-\alpha/(2y))$. We then have
\begin{align*}
\frac{\partial}{\partial y} \myF(\alpha,x,y) =  &- \Big\{\Phi(x+z) - \Phi(x - z) \Big\} - y \Big\{\frac{\varphi(x+z)}{\varphi(z)} + \frac{\varphi(x-z)}{\varphi(z)} \Big\}\frac{\alpha}{2y^2}\,,
\end{align*}
where $\varphi(t)\equiv e^{-t^2/2} \de t/\sqrt{2\pi}$ is the standard normal density function. Since $z > 0$ and $\Phi$ is monotone increasing, it is easy to see that $(\partial/\partial y) \myF(\alpha,x,y) < 0$ for $y> 0$.
%
\rev{
\subsection{Proof of Proposition~\ref{lindberg}}\label{app:lindberg}
%
Write 
\begin{align}
Z_i = \frac{1}{\sqrt{n}}\sum_{\ell=1}^n \zeta_\ell, \quad \text{ with }  \zeta_\ell \equiv \frac{g_i^\sT x_\ell w_\ell}{ \sigma (g_i^\sT \hSigma g_i)^{1/2}}\,.
\end{align}

Note that conditional on $X$, the random variables $\zeta_\ell$ are zero mean and independent. In addition, $\sum_{\ell=1}^n \E(\zeta_\ell^2|X) = n$.  Let $c_n = (g_i^\sT \hSigma g_i)^{1/2}$. Similar to the proof of Theorem \ref{thm:typeI-random}, by using Lemma~\ref{lem:concentration} and \ref{lem:sufficient}, Assumption~\ref{ass:var} holds and hence
\[
\lim\inf_{n\to\infty} c_n \ge c_0 > 0\,,
\]
for some positive constant $c_0$. We are now ready to prove that the Lindeberg condition holds. If optimization~\eqref{OPT:M-n-n} is feasible for $i\in [k]$, then $|\zeta_\ell| \le (\sigma c_n)^{-1} \|Xg_i\|_\infty \|w\|_\infty \le 
(\sigma c_n)^{-1} n^{\beta} \|w\|_\infty$. Therefore,
\begin{align*}
\lim_{n\to\infty} \frac{1}{n} \sum_{\ell=1}^n \E\left(\zeta_\ell^2\ind(|\zeta_\ell|\ge \eps\sqrt{n}) |X \right) &\le \lim_{n\to\infty} \frac{1}{n} \sum_{\ell=1}^n \E\left\{\zeta_\ell^2\ind\Big(\|w\|_\infty \ge \eps\sigma c_n n^{1/2-\beta}\Big) |X \right\}\\
& \le \lim_{n\to\infty} \frac{1}{n} \sum_{\ell=1}^n \frac{g_i^\sT x_\ell x_\ell^\sT g_i}{\sigma^2(g_i^\sT \hSigma g_i)}  \E\left\{w_\ell^2 \ind\Big(\|w\|_\infty > \eps {\sigma c_0} n^{1/2-\beta}\Big) \right\}\\
& \le \lim_{n\to\infty} \frac{1}{n} \sum_{\ell=1}^n \frac{g_i^\sT x_\ell x_\ell^\sT g_i}{\sigma^2(g_i^\sT \hSigma g_i)}  \E\left\{\|w\|_\infty^2 \ind\Big(\|w\|_\infty > \eps \eps {\sigma c_0} n^{1/2-\beta}\Big) \right\}\\
&\le \lim_{n\to \infty} \frac{n}{\sigma^2} \E\left\{w_1^2 \ind\Big(|w_1| > \eps {\sigma c_0} n^{1/2 - \beta}\Big)\right\}\\
&\le \frac{1}{\sigma^2} \Big(\frac{1}{\sigma \eps c_0}\Big)^{2+a}\lim_{n\to\infty} n^{1 - (2+a)(1/2-\beta)} \E(|w_1|^{4+a}) = 0\,, 
\end{align*}
where the last limit follows since $a> 4\beta/(1-2\beta)$ and $\E(|w_1|^{4+a})$ is finite.

What we are left to prove is that optimization~\eqref{OPT:M-n-n} is feasible for all $i\in [k]$, with high probability. This follows by showing that $\Sigma^{-1} u_i$ is a feasible solution to \eqref{OPT:M-n-n} using the tail bound inequality for sub-gaussian variables. This is very similar to the argument presented in the proof of \cite[Lemma 6.3]{javanmard2014confidence} and is omitted here. 
}
\subsection{Proof of Lemma \ref{lem:prediction}}\label{proof:lem-prediction}
By computing $u$ from~\eqref{eq:u-opt} in case of $\Omega_0 = \{\th:\, \<\xi,\th\> = c\}$, we have $u = \xi/\|\xi\|$.  Let $q= (\hsigma^2/n) (g^\sT \hSigma^{(2)} g + 10^{-4})$ and $d = q^{-1/2}$. Then, the test statistics~\eqref{test-statistic}
becomes $$T_n = \Big|d \Big(\dgamma - \frac{\xi^\sT \pth}{\|\xi\|}\Big)\Big| = \Big|d \Big(\dgamma - \frac{c}{\|\xi\|}\Big)\Big|\,,$$ because $\pth\in \Omega_0$.

By duality of hypothesis testing and confidence intervals, the $(1-\alpha)$ confidence interval of $\<\xi,\th_0\>$, denoted by $C(\alpha)$, consists of all values $c$ such that we fail to reject $H_0$ at level $\alpha$. Namely, $C(\alpha) = [c_{\min}, c_{\max}]$ such that $c\in C(\alpha)$ if and only if $T_n<z_{\alpha/2}$. Plugging for $d$ this yields
\begin{align*}
c_{\min}  &= \Big(\dgamma - \frac{\hsigma}{\sqrt{n}} \sqrt{g^\sT\hSigma g}\, z_{\alpha/2}\Big) \|\xi\|\,,\\
c_{\max} &= \Big(\dgamma + \frac{\hsigma}{\sqrt{n}} \sqrt{g^\sT\hSigma g}\, z_{\alpha/2}\Big) \|\xi\|\,.
\end{align*}
The proof is complete.
\rev{
\subsection{Proof of Lemma~\ref{lem:quadratic-CI}}\label{proof:lem-quadratic-CI}
For $\phi_0,s_0, K\in \reals_{\ge 0}$, define the set $\event_n(\phi_0,s_0,K)$ as follows:
\begin{align*}
\event_n(\phi_0,s_0,K) \equiv \{X\in\reals^{n\times p}:\quad \min_{S:|S|\le s_0} \phi(\hSigma,S)\ge \phi_0, \,\, \max_{i\in [p]} \hSigma_{i,i}\le K,\, \, \hSigma \equiv (X^\sT X)/n\}\,.
\end{align*} 
By using~\cite[Theorem 2.4 (a)]{javanmard2014confidence}, there exists constant $c_*\le 2000$, such that for $n\ge Cs_0\log(p/s_0)$, $C  = 4c_*(C_{\max} \kappa^4/C_{\min})$ and $\phi_0 = C_{\min}^{1/2}$, $K \ge 1+20\kappa^2\sqrt{(\log p)/n}$, we have
\begin{align}\label{E-event}
\prob(X\in \event_n)\ge 1-4e^{-c_1n}, \quad c_1 \equiv \frac{1}{c_*\kappa^4}\,.
\end{align}

Define the set $\Omega_1 \equiv \{\theta\in \reals^p:\, \|\hth^{(1)}-\theta\|_2\le \frac{\sqrt{20 s_0}}{\phi_0^2}\lambda\}$. Using the result of~\cite[Lemma 6.10]{buhlmann2011statistics}, we have that for $\lambda \ge 8\sigma \sqrt{K(1+c_0) (\log p)/n}$, 
\begin{align}\label{Bickel-L2}
\prob(\theta_0\in \Omega_1) \ge 1 - 2p^{-c_0}\,. 
\end{align}
(Note that $\hth^{(1)}$ is computed based on $n/2$ samples.) 

Let $\cG$ be the probability event that both of the above high probability events hold, that is $\cG\equiv  \{X\in \event_n(\phi_0,s_0,K)\} \cap \{\theta_0\in \Omega_1\}$. Therefore, $\prob(\cG)\ge  1- 4e^{-c_1n} -2p^{-c_0}$. 

Assuming $\cG$, we rewrite the $\ell_\infty$ projection in~\eqref{my-Tn} for this case with $u = \hth^{(1)}/\|\hth^{(1)}\|$ and $k = 1$, as follows:
\begin{align}
\hthp = & \quad \underset{\theta \in \reals^p}{\text{argmin}} \quad \Big |d \Big(\dgamma - \frac{ \theta^\sT \hth^{(1)}}{\|\hth^{(1)}\|}\Big)\Big|\,
\quad \text{subject to} \;\;\; \th\in \Omega_0(c)\cap \Omega_1\,, 
\end{align}
and the test statistics is given by $T_n = |d(\dgamma- (\hth^{(1)})^\sT\hthp/\|\hth^{(1)}\|)|$. By duality of hypothesis testing and confidence intervals, we need to find the range of values of $c$, such that $T_n\le z_{\alpha/2}$ (i.e, the test rule fails to reject the null hypothesis). Note that $T_n\le z_{\alpha/2}$ if and only if
\begin{align}
\big|\dgamma \|\hth^{(1)}\| - (\hth^{(1)})^\sT \pth  \big| <\frac{1}{d} z_{\alpha/2} \|\hth^{(1)}\|\,.
\end{align}
Writing $(\hth^{(1)})^\sT \pth = \tfrac{1}{2} (\|\pth\|^2 + \|\hth^{(1)}\|^2  - \|\pth - \hth^{(1)}\|^2)$ and using that fact that $\pth\in\Omega_0(c)\cap \Omega_1$, the above inequality yields
\begin{align*}
\Big|\dgamma \|\hth^{(1)}\|  -\tfrac{1}{2} \|\hth^{(1)}\|^2 - \tfrac{1}{2} c \Big| &<\frac{1}{d} z_{\alpha/2} \|\hth^{(1)}\| + \frac{1}{2} \|\hth^{(1)} - \pth\|^2 \\
&\le \frac{1}{d} z_{\alpha/2} \|\hth^{(1)}\| + \frac{10s_0}{\phi_0^4} \lambda^2 \\
&\le \frac{1}{d} z_{\alpha/2} \|\hth^{(1)}\| + C \frac{s_0\log p}{n}\,, 
\end{align*}
with $C\equiv \frac 640\sigma^2 K(1+c_0)\phi_0^{-4}$. Rearranging the terms and substituting for $d$, we get $c\in C(\alpha) = [c_{\min}, c_{\max}]$, where $c_{\min}$ and $c_{\max}$ are given by
\begin{eqnarray*}
c_{\min}&\equiv& 2 \|\hth^{(1)}\| \dgamma - \|\hth^{(1)}\| - L - C \frac{s_0\log p}{n} \,,\\
c_{\max} &\equiv& 2 \|\hth^{(1)}\| \dgamma - \|\hth^{(1)}\| + L + C \frac{s_0\log p}{n}\,,
\end{eqnarray*}
with $L$ given by~\eqref{L}. As shown in the proof of Theorem~\ref{thm:typeI}, Assumption~\ref{ass:var} holds which implies that $L\gtrsim 1/\sqrt{n}$. In addition, by our assumption $s_0 = o(\sqrt{n}/\log p)$, which results in $\frac{s_0\log p}{n} = o(L)$.

Regarding the coverage probability, we use the duality of hypothesis testing and confidence intervals to obtain 
 \begin{align}
 \underset{n\to \infty}{\limsup} \;\; \prob(\|\theta_0\|_2^2 \notin C(\alpha); \cG) \le \alpha\,.
 \end{align}
 Hence,
 \[
  \underset{n\to \infty}{\limsup} \;\; \prob(\|\theta_0\|_2^2 \notin C(\alpha)) \le \alpha +  \underset{n\to \infty}{\limsup} \;\; \prob(\cG^c) \le \alpha \,.
 \]

}

 \subsection{Choice of $U$ for testing $\theta_{\min}$ condition}~\label{app:justification}
 Here we provide a justification for the choice of $U$, given by~\eqref{eq:u-beta-min}, for testing $\theta_{\min}$ condition. Recall that in this case $\Omega_0 = \{\theta\in \reals^p: \min_{j\in \supp(\theta)} |\th_j| \ge c\}$. 
 Instead of directly solving optimization~\eqref{opt:final0}, which is hard due to non-convexity of $\Omega_0$, we first develop a lower bound and find $U$ that maximizes the lower bound.
 
 The lower bound is obtained by fixing $k=1$ in the optimization~\eqref{opt:final0}. The problem then amounts to $$\maximize _{u: \|u\|_2\le 1}\,\, \de(\hth^{(1)},\Omega_0;u)\,,$$ which by plugging in for $\de(\hth^{(1)},\Omega_0;u)$ is equivalent to $$\maximize _{u: \|u\|_2\le 1} \inf_{\theta\in \Omega_0} \,\,|u^\sT(\theta -\hth^{(1)})|\,.$$
 
 We claim that the optimal $u$ should be one of the standard basis element. To see this, consider $u\neq e_i$, for $i\in [p]$. Then, there exists a vector $v\in \reals^p$ such that $v_j\neq 0$ for all $j\in [p]$ and $v^\sT u = 0$. Choose $\lambda\in \reals$ large enough such that all the coordinates of $\th =  \hth^{(1)} + \lambda v$ have magnitude larger than $c$. Therefore, $\th\in \Omega_0$  and $u^\sT(\theta -\hth^{(1)})= 0$.
 
 Setting $u = e_i$, the objective becomes $\inf_{\theta\in \Omega_0} |\theta_i - \hth^{(1)}_i| = |\cS(\hth_i^{(1)},c)-\hth_i^{(1)}|$, by Lemma~\ref{proj-betamin}. Therefore, the optimal value of objective is achieved for $i = i^\star$ given by~\eqref{eq:u-beta-min}.

\subsection{Proof of Lemma~\ref{ssLasso}}\label{app:ssLasso}
We apply~\cite[Theorem 1]{SZ-scaledLasso}, where using their notation with their $\lambda_0$ replaced by $\lambda$, $\xi = 3$, $T =\supp(\th_0)$,
$\kappa(\xi,T)\ge \phi_0$, $\eta_*(\sigma^*\lambda,\xi) \le 4s_0\lambda^2/\phi_0^2$. By a straightforward manipulation of Eq. (13) in~\cite{SZ-scaledLasso}, we have
for $\|X^\sT w/(n\sigma^*)\|_{\infty}\le \lambda/2$, 
\begin{align}
\Big|\frac{\hsigma}{\sigma^*}-1 \Big| \le \frac{2\sqrt{s_0}\lambda}{\phi_0\sigma^*} = \frac{2c}{\phi_0\sigma^*} \sqrt{\frac{\log p}{n}}\,.
\end{align}
Note that
\begin{align}\label{SS0}
\prob\left(\frac{\|X^\sT w\|_{\infty}}{{n}\sigma^*}> \frac{\lambda}{2}\right) \le \prob\left(\frac{\|X^\sT w\|_\infty}{{n}\sigma}> \frac{\lambda}{4}\right) + \prob\left(\frac{\sigma}{\sigma^*}>2\right)
\end{align}
We define $v_j = w^\sT Xe_j/(\sqrt{n}\sigma)$. Since $v_j\sim\normal(0,\hSigma_{jj})$ by applying a standard tail bound on the supremum of $p$ gaussian random variables, we get
\begin{align}\label{SS1}
 \prob\left(\frac{\|X^\sT w\|_\infty}{{n}\sigma}> \frac{\lambda}{4}\right) \le 2pe^{-\lambda^2 n /(32K^2)} = 2p^{-c_0}\,\quad \quad c_0 = \frac{c^2}{32K}-1\,.
\end{align}
For the second term, note that 
$$\frac{\sigma^{*2}}{\sigma^2} = \frac{\|w\|^2}{n\sigma^2} = \frac{1}{n} \sum_{j=1}^n Z_j^2\,,$$
with $Z_j\sim\normal(0,1)$  independent. By a standard tail bound for $\chi^2$ random variables
we have 
\begin{align}\label{SS2}
 \prob\Big(\frac{\sigma^*}{\sigma}\le\frac{1}{2}\Big)\le \prob\Big(\Big|\frac{1}{n} \sum_{j=1}^n Z_j^2-1\Big|>\frac{3}{4}\Big)\le 2e^{-n/16}\,.
\end{align}
Combining~\eqref{SS1}, \eqref{SS2} in \eqref{SS0}, we get that
\[ \prob\left(\frac{\|X^\sT w\|_\infty}{{n}\sigma}> \frac{\lambda}{4}\right) \le 2p^{-c_0} + 2e^{-n/16}\,,\]
which yields the desired result.
%
%
%
\subsection{Proof of Proposition~\ref{pro:mu-random}}\label{proof:mu-random}
Note that by Definition~\ref{mu*}, clearly
\begin{align}
\mu_{\min}(X;U)\le \big|\hSigma\Sigma^{-1} U-U\big|_\infty\, .
\end{align}
Therefore the statement follows readily from the following lemma.
\begin{lemma}\label{lem:main_lem}
Consider a random design matrix $X \in \reals^{n\times p}$, with i.i.d. rows  having mean zero and population covariance $\Sigma$.
Assume that
\begin{itemize}
\item[$(i)$] We have $\sigma_{\min} (\Sigma) \ge C_{\min} >0$, and $\sigma_{\max}(\Sigma) \le C_{\max} <\infty$.
\item[$(ii)$] The rows of $X\Sigma^{-1/2}$ are sub-gaussian with $\kappa = \|\Sigma^{-1/2} x_1\|_{\psi_2}$.
\end{itemize}
Let $\hSigma = (X^\sT X)/n$ be the empirical covariance. Then, for any fixed $U\in \reals^{p\times k}$ independent of $X$ satisfying $U^\sT U = I$, and for any fixed constant $a>0$, the following holds true
\begin{eqnarray}
\prob \bigg\{\Big| \hSigma \Sigma^{-1} U - U \Big|_\infty \ge a \sqrt{\frac{\log p}{n}} \bigg\} \le 2p^{-c_2}\,, 
\end{eqnarray}
with $c_2 = (a^2 C_{\min})/(24 e^2 \kappa^4 C_{\max}) - 2$.
\end{lemma}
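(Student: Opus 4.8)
The plan is to reduce the matrix $\infty$-norm bound to a union bound over the $pk$ scalar entries, and to control each entry by a Bernstein-type concentration inequality for an average of independent, centered, sub-exponential random variables. First I would use $\Sigma\Sigma^{-1}U=U$ to write $\hSigma\Sigma^{-1}U-U=(\hSigma-\Sigma)\Sigma^{-1}U$, and express its $(j,\ell)$ entry, with $u_\ell$ the $\ell$-th column of $U$, as
\[
\big[(\hSigma-\Sigma)\Sigma^{-1}U\big]_{j\ell} = \frac1n\sum_{i=1}^n \Big( x_{ij}\,\langle x_i,\Sigma^{-1}u_\ell\rangle - \E\big[x_{ij}\,\langle x_i,\Sigma^{-1}u_\ell\rangle\big]\Big),
\]
where I have used $\E[x_ix_i^\sT]=\Sigma$ so that the centering constant equals $e_j^\sT\Sigma\Sigma^{-1}u_\ell=U_{j\ell}$. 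Thus each entry is an average of $n$ i.i.d.\ mean-zero summands $Y_i^{(j,\ell)}$.

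Next I would show that each $Y_i^{(j,\ell)}$ is sub-exponential with an explicit norm. Writing $x_i=\Sigma^{1/2}z_i$, the vector $z_i$ has sub-Gaussian norm $\kappa$ by assumption $(ii)$ and $\E[z_iz_i^\sT]=I$. Then $x_{ij}=\langle\Sigma^{1/2}e_j,z_i\rangle$ is sub-Gaussian with $\|x_{ij}\|_{\psi_2}\le\kappa\|\Sigma^{1/2}e_j\|=\kappa\sqrt{\Sigma_{jj}}\le\kappa\sqrt{C_{\max}}$, while $\langle x_i,\Sigma^{-1}u_\ell\rangle=\langle\Sigma^{-1/2}u_\ell,z_i\rangle$ is sub-Gaussian with norm $\le\kappa\|\Sigma^{-1/2}u_\ell\|\le\kappa/\sqrt{C_{\min}}$, using $\sigma_{\min}(\Sigma)\ge C_{\min}$ and $\|u_\ell\|=1$. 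Since the product of two sub-Gaussians is sub-exponential with $\psi_1$-norm bounded by the product of the $\psi_2$-norms, and centering inflates the norm by at most a fixed constant, I obtain $\|Y_i^{(j,\ell)}\|_{\psi_1}\le C\,\kappa^2\sqrt{C_{\max}/C_{\min}}$.

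I would then apply Bernstein's inequality to the average of the $Y_i^{(j,\ell)}$. Setting $t=a\sqrt{(\log p)/n}$, the regime $t\to 0$ — equivalently $n\gtrsim a^2(\log p)C_{\min}/(\kappa^4 C_{\max})$, which is exactly the sample-size hypothesis carried into Proposition~\ref{pro:mu-random} — places us in the sub-Gaussian tail regime of Bernstein, where the per-entry bound reads $\exp(-c\,n t^2/\|Y^{(j,\ell)}\|_{\psi_1}^2)=\exp\big(-c\,a^2(\log p)\,C_{\min}/(\kappa^4 C_{\max})\big)$. Finally I union bound over the $pk\le p^2$ entries, recalling $k\le p$ since $U^\sT U=I_k$; this multiplies by $p^2$, which I absorb into the exponent as the additive $-2$, giving the claimed $c_2 = a^2 C_{\min}/(24 e^2\kappa^4 C_{\max})-2$.

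The main obstacle is tracking the absolute constants so as to land precisely on $24e^2$. The cleanest route is to bypass a black-box Bernstein inequality and instead bound the moment generating function $\E[\exp(\lambda Y_i^{(j,\ell)})]$ directly: one controls the moments $\E[|x_{ij}\,\langle x_i,\Sigma^{-1}u_\ell\rangle|^m]$ by Cauchy--Schwarz together with the sub-Gaussian moment growth of the two factors, and the factor $e^2$ emerges from applying the estimate $m^m/m!\le e^m$ to each of the two factors. Optimizing the Chernoff exponent over $\lambda$ in the admissible range — which is where the sample-size condition is consumed — then produces the constant $24e^2$, after which the union bound is routine.
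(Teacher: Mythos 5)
Your proposal is correct and follows essentially the same route as the paper's proof: the same entry-wise decomposition of $\hSigma\Sigma^{-1}U-U$ into an i.i.d.\ centered average, the same sub-Gaussian norm bounds $\kappa\sqrt{C_{\max}}$ and $\kappa/\sqrt{C_{\min}}$ on the two factors, the product-of-sub-Gaussians-is-sub-exponential step, Bernstein's inequality in its quadratic (small-deviation) branch under the same sample-size condition, and a union bound over at most $p^2$ entries yielding the additive $-2$. Your closing suggestion to control the MGF directly is just an unpacking of the constants inside the Bernstein inequality the paper cites (where the $e\kappa'$ normalization produces $6e^2\kappa'^2 = 24e^2\kappa^4 C_{\max}/C_{\min}$), not a different argument.
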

\begin{proof}[Proof of Lemma \ref{lem:main_lem}]
The proof is an application of the Bernstein-type inequality for sub-exponential random variables~\cite{Vershynin-CS}.
Define $\tilde{x}_\ell = \Sigma^{-1/2} x_\ell$, for $\ell \in [n]$, and write
\[
H\equiv \hSigma \Sigma^{-1} U - U = \frac{1}{n} \sum_{\ell=1}^n \Big\{x_\ell x_\ell^\sT \Sigma^{-1} U - U \Big\}
= \frac{1}{n} \sum_{\ell=1}^n \Big\{\Sigma^{1/2} \tilde{x}_\ell \tilde{x}_\ell^\sT \Sigma^{-1/2}U - U \Big\}\,.
\] 
Fix $i,j \in [p]$, and for $\ell \in [n]$, let 
$v^{(ij)}_\ell = (e_i^\sT \Sigma^{1/2} \tilde{x}_\ell ) (\tilde{x}_\ell^\sT \Sigma^{-1/2} u_j)  - u_{j,i}$, where $u_{j,i}$ denotes the $i$-th component of $u_j$.
Notice that $\E(v^{(ij)}_\ell) = 0$, and the $v^{(ij)}_\ell$ are independent for $\ell \in [n]$, since $U$ is independent of $X$. In addition, $H_{i,j} = (1/n) \sum_{\ell=1}^n v^{(ij)}_\ell$.
By~\cite[Remark 5.18]{Vershynin-CS}, we have 
\[
\|v^{(ij)}_\ell\|_{\psi_1} \le 2 \|(e_i^\sT \Sigma^{1/2} \tilde{x}_\ell) (\tilde{x}_\ell^\sT \Sigma^{-1/2} u_j) \|_{\psi_1}.
\]
Moreover, for any two random variables $X$ and $Y$, we have
\begin{align*}
\|XY\|_{\psi_1} &= \sup _{p \ge 1} \,  p^{-1} \E(|XY|^p)^{1/p} \\
& \le \sup _{p \ge 1} \, p^{-1} \E(|X|^{2p})^{1/2p}\, \E(|Y|^{2p})^{1/2p} \\
&\le 2\, \Big(\sup _{q \ge 2} \, q^{-1/2} \E(|X|^{q})^{1/q}\Big) \, \Big(\sup _{q \ge 2} \, q^{-1/2} \E(|Y|^{q})^{1/q} \Big)\\
& \le 2 \|X\|_{\psi_2} \, \|Y\|_{\psi_2} \,.
\end{align*}
Hence, by assumption $(ii)$, we obtain
\begin{align*}
\|v^{(ij)}_\ell\|_{\psi_1} &\le 4 \|e_i^\sT \Sigma^{1/2} \tilde{x}_\ell \|_{\psi_2} \|\tilde{x}_\ell^\sT \Sigma^{-1/2} u_j \|_{\psi_2}\\
&\le 2 \|\Sigma^{1/2} e_i\|_2  \|\Sigma^{-1/2} u_j\|_2 \kappa^2 \\
&\le 2 \sqrt{\frac{C_{\max}}{C_{\min}}}\,\|u_j\|_2 \kappa^2 =  2 \sqrt{\frac{C_{\max}}{C_{\min}}}\, \kappa^2\,.
\end{align*}
Define $\kappa'  \equiv 2 \sqrt{C_{\max} /C_{\min}} \kappa^2$. We now use the Bernstein-type inequality for centered sub-exponential random variables~\cite{Vershynin-CS} to get
\[
\prob\Big\{\frac{1}{n} \Big|\sum_{\ell=1}^n v^{(ij)}_\ell \Big| \ge \eps \Big\} \le 2 \exp \Big[ -\frac{n}{6} \min\Big((\frac{\eps}{e\kappa'})^2, \frac{\eps}{e\kappa'}\Big) \Big]\,.
\]
Choosing $\eps = a\sqrt{(\log p)/n}$, and 
assuming $n \ge [a/(e\kappa')]^2 \log p$, we arrive at
\[
\prob\bigg \{\frac{1}{n} \Big|\sum_{\ell=1}^n v^{(ij)}_\ell \Big| \ge a\sqrt{\frac{\log p}{n}}  \bigg\} 
\le 2 p^{-a^2/(6e^2\kappa'^2)}\,.
\]
The result follows by union bounding over all possible pairs $i, j \in [p]$.
\end{proof}

\subsection{Proof of Lemma~\ref{lem:random-deter}}\label{proof:random-deter}
Define the event 
\begin{align}
\cH_n(a) \equiv \Big\{X\in \reals^{n\times p}:\, \Big| \hSigma \Sigma^{-1} U - U \Big|_\infty \le a \sqrt{\frac{\log p}{n}} \Big\}\,.
\end{align}
In other words, $\cH_n(a)$ is the event that $\Sigma^{-1} u_i$ is a feasible solution
of~\eqref{OPT:M-n}, for $1\le i\le k$. By Lemma~\ref{lem:main_lem}, $\prob(\cH_n(a)) \ge 1-2p^{-c_2}$.
On this event, letting $g_i$ be the solution of the optimization
problem~\eqref{OPT:M-n}, we have
\begin{align*}
g_i^\sT \hSigma g_i &\le u_i^\sT\Sigma^{-1} \hSigma \Sigma^{-1} u_i \nonumber \\
& = (u_i^\sT\Sigma^{-1} \hSigma \Sigma^{-1} u_i -
u_i^\sT\Sigma^{-1} u_i)+ u_i^\sT\Sigma^{-1} u_i \nonumber \\
& = \frac{1}{n} \sum_{j=1}^n(V_j^2-
u_i^\sT\Sigma^{-1} u_i)+ u_i^\sT\Sigma^{-1} u_i \,,
\end{align*}
where $V_j = u_i^\sT \Sigma^{-1} x_j$ are i.i.d. random variables with $\E(V_j^2) =
u_i^\sT \Sigma^{-1} u_i$ and sub-gaussian norm 
$$\|V_j\|_{\psi_2} \le \|\Sigma^{-1/2} u_i\|_2\|
\Sigma^{-1/2} x_j\|_{\psi_2}\le \frac{\kappa}{\sqrt{C_{\min}}}.$$

Letting $S_j=V_j^2-
u_i^\sT \Sigma^{-1} u_i $, we have that $S_j$ is zero mean and 
sub-exponential with $\|S_j\|_{\psi_1}\le 2 \|V_j^2\|_{\psi_1} \le 4 \|V_j\|_{\psi_2}^2\le 
4\kappa^2C_{\rm min}^{-1}\equiv \kappa'$. 
Hence, by applying Bernstein inequality for centered sub-exponential random variables~\cite{Vershynin-CS} (similar to the
proof of Lemma \ref{lem:main_lem}), we have, 
for $\eps\le e\kappa'$,
\begin{align*}
\prob\Big(g_i^\sT \hSigma g_i \ge u_i^\sT\Sigma^{-1} u_i+\eps\Big)\le
2\, e^{-(n/6)(\eps/e\kappa')^2}+2\, p^{-c_2}\, .
\end{align*}
We can make $c_2\ge 2$ by a suitable choice of $a$.
The result then follows by letting $\eps = e\kappa'\sqrt{6c_2 (\log p)/n}$.
%
\rev{
\subsection{Proof of Lemma~\ref{lem:moment}}\label{proof:lem-moment}
By definition of sub-gaussian norm, given by~\eqref{def:subG}, we have $\E(|X_{ij}|^q)\le C^qq^{q/2}$, for all $q\ge 1$.
To prove $(i)$, note that $\bE(y_i^2)\le \E(y_i^4)^{1/2} \le \sqrt{C'}$,  and $\E_n[X_{ij}^2 y_i^2]\le (\E_n[X_{ij}^4])^{1/2} (\E_n[y_i^4])^{1/2} \le 4\sqrt{C'} C^2$.
To prove $(ii)$, note that by Holder's inequality we have that for any fixed $j\in [p]$, $\E_n[|X_{ij}^3 w_i^3|]\le (\E_n[X_{ij}^{12}])^{1/4} (\E_n[w_i^4])^{3/4}\le 12^{3/2} C^3 C''^{3/4} = O(1)$ and also by our assumption $\log p  = o(n^{1/3})$. 
Finally to show $(iii)$, we note that by simple union bounds and tail properties of sub-gaussian variables, $\max_{ij} X_{ij}^2 = O_p(\log p)$. Further, $s = o(n/\log^2(p))$ and hence the first part of $(iii)$ holds. 
To prove the second part of $(iii)$, we note that $\max_{j\in [p]} \E_n[X_{ij}^4 w_i^4]\le \E_n[w_i^4] \max_{i\in[n], j\in[p]} X_{ij}^4 = O_P(\log^2 p)$. Now by an application of maximal inequality (See~\cite[Lemma S.4]{belloni2012sparse}), we obtain 
\[
\max_{j\in[p]} \big|\E_n[X_{ij}^2 w_i^2] - \bE[X_{ij}^2 w_i^2] \big| =O_P\Big(\log p \sqrt{\frac{\log p}{n}}\Big) = O_P\Big(\sqrt{\frac{\log^3(p)}{n}}\Big) = o_P(1).
\] 
Likewise, we have 
\[
\max_{j\in[p]} \big|\E_n[X_{ij}^2 y_i^2] - \bE[X_{ij}^2 y_i^2] \big| = o_P(1)\,. 
\]
Combining these two equations we get the second part of $(iii)$.

}

\bibliographystyle{amsalpha}
\bibliography{all-bibliography}

\end{document}